\setlist[itemize]{leftmargin=1.8cm}
\numberwithin{equation}{section}
\theoremstyle{plain}
\newtheorem{maintheorem}{Theorem}
\newtheorem{theorem}{Theorem}[section]
\newtheorem{lemma}[theorem]{Lemma}
\newtheorem{proposition}[theorem]{Proposition}
\newtheorem{corollary}[theorem]{Corollary}
\theoremstyle{definition}
\newtheorem*{ack}{Acknowledgements}
\newtheorem{remark}[theorem]{Remark}
\theoremstyle{remark}
\newcommand{\OG}{OG}
\DeclareMathOperator{\Ext}{Ext}
\DeclareMathOperator{\Hom}{Hom}
\DeclareMathOperator{\Spin}{Spin}
\DeclareMathOperator{\Pic}{Pic}
\DeclareMathOperator{\rk}{rk}
\DeclareMathOperator{\Imm}{Im}
\DeclareMathOperator{\id}{id}
\renewcommand{\ss}{\mathbf{ss}}
\newcommand{\cHom}{\mathcal{H}om}
\newcommand{\is}{{i_s}}
\DeclareMathOperator{\SL}{SL}
\newcommand{\bC}{{\mathbb C}}
\newcommand{\bP}{{\mathbb P}}
\newcommand{\rE}{\mathrm{E}}
\newcommand{\rG}{\mathrm{G}}
\newcommand{\rP}{\mathrm{P}}
\newcommand{\rV}{\mathrm{V}}
\newcommand{\cA}{\mathcal{A}}
\newcommand{\cB}{\mathcal{B}}
\newcommand{\cD}{\mathcal{D}}
\newcommand{\cE}{\mathcal{E}}
\newcommand{\cF}{\mathcal{F}}
\newcommand{\cN}{\mathcal{N}}
\newcommand{\cO}{\mathcal{O}}
\newcommand{\cU}{\mathcal{U}}
\newcommand{\cL}{\mathcal{L}}
\newcommand{\rL}{\mathrm{L}}
\newcommand{\HH}{H}
\newcommand{\Db}{{\mathbf D^{\mathrm{b}}}}
\title{Derived category of the spinor 15-fold}
\author[V.~Benedetti]{Vladimiro Benedetti}
\address{Universit\'e Côte d’Azur, CNRS – Laboratoire J.-A. Dieudonné, Parc Valrose, F-06108 Nice CEDEX 2, France}
\email{vladimiro.benedetti@univ-cotedazur.fr}
\author[D.~Faenzi]{Daniele Faenzi}
\address{Institut de Mathématiques de Bourgogne,
UMR CNRS 5584,
Université de Bourgogne et Franche-Comté,
9 Avenue Alain Savary,
BP 47870,
21078 Dijon Cedex,
France}
\email{daniele.faenzi@u-bourgogne.fr}
\author[M.~Smirnov]{Maxim Smirnov}
\address{Institut für Mathematik, Fakultäten Mathematisch-Naturwissenschaftlich-Technische Fakultät, Universitätsstrasse 14, 86159 Augsburg, Germany}
\email{maxim.smirnov@math.uni-augsburg.de}
\email{maxim.n.smirnov@gmail.com}
\keywords{Full exceptional Lefschetz collections. Derived categories. Spinor varieties. Isotropic grassmannians. Freudenthal's magic square.}
\subjclass{14F08}
\begin{document}

\begin{abstract}
We construct a full exceptional Lefschetz collection on the spinor 15-fold consisting of a connected component of the space of orthogonal 6-dimensional subspaces of a 12-dimensional complex vector space, isotropic with respect of a fixed non-degenerate quadratic form.
The collection is made of 2 twists of a 4-item block and 8 twists of a 3-item block, confirming a conjecture of Kuznetsov and Smirnov. We speculate that a similar collection might work for the Freudenthal $\rE_7$-variety.
\end{abstract}

\maketitle

\sloppy 

\section{Introduction}

It is widely expected that, for any parabolic subgroup $\rP$ of a reductive complex algebraic group $\rG$, the associated rational homogeneous variety $X=\rG/\rP$ admits a full exceptional $\rG$-equivariant collection. Moreover the objects of such a collection should admit a natural partial order induced by the Bruhat-Chevalley order, see for instance \cite{bohning:homogeneous} for an account. While full exceptional collections were given for flags of type $A_n$ and quadrics in \cite{beilinson:linear, kapranov:homogeneous}, in the remaining classical types exceptional collections of maximal length were constructed much later, see \cite{kuznetsov-polishchuk}. 
Some more cases admitting full exceptional collections were studied, notably for isotropic grassmannians in the symplectic case, we refer for instance to \cite{samokhin:homogeneous, polishchuk-samokhin, fonarev:lagrangian}. Full exceptional collections on some homogeneous varieties of exceptional type were studied in \cite{faenzi-manivel, belmans-kuznetsov-smirnov}.

A slightly different point of view on the structure of the derived category and on exceptional collections stems from homological projective duality, as in \cite{kuznetsov:hpd}. In this context, the emphasis is on Lefschetz properties with respect to a given ample line bundle $\cO_X(1)$, so that a full exceptional collection should be obtained from a first set of objects by twisting them with $\cO_X(t)$, for $t=0,\ldots,\ell-1$ and occasionally removing some objects. Here $\ell$ is some integer which is often the Fano index of $X$, see below.
Full exceptional Lefschetz collections were given in some classical and exceptional types in \cite{kuznetsov:isotropic, fonarev:grassmannians, faenzi-manivel, belmans-kuznetsov-smirnov}.
The question of when one should remove objects along the construction of a Lefschetz collection is a very interesting point giving rise to the study of residual categories, conjecturally related to the structure of the quantum cohomology of $X$, according to a refinement of Dubrovin's conjecture, see \cite{kuznetsov-smirnov:residual_classical, kuznetsov-smirnov:residual_grassmannian}.

In this paper we focus on two specific varieties, one of classical type and the other of exceptional type, constructing a full exceptional Lefschetz collection on the first one and providing numerical evidence on the second one, based on the ansatz that they should share some common features as they sit on the same row of Freudenthal's magic square related to real divison algebras, cf. \cite{LANMAN}.
We write $X_m$ for the varieties sitting in the third row of the Freudenthal's magic square, where the index $m$ refers to the dimension of the corresponding real division algebra $\mathbb{A}_m$.
These varieties are homogeneous for the action of a group $\rG$ listed below. They are Fano varieties whose Picard group is generated by a very ample line bundle $\cO_{X_m}(1)$, hence $\omega_{X_m} \simeq \cO_{X_m}(-\iota_{X_m})$ for some integer $\iota_{X_m}$ called the Fano index of $X_m$.
We have $\dim(X_m)=3(m+1)$ and $\iota_{X_m}=2(m+1)$.
\begin{center}
    \begin{tabular}{c|c|c|c|c}
    $m$ & 1 & 2 & 4 & 8 \\ 
\hline \hline
        $\mathbb{A}_m$ & $\mathbb{R}$ & $\mathbb{C}$ & $\mathbb{H}$ & $\mathbb{O}$ \\
\hline
        $\rG$ & $\mathrm{Sp}_3$ & $\mathrm{GL}_6$ & $\Spin_{12}$ & $\rE_7$ \\
\hline        
        $X_m$ & $LG(3,6)$ & $G(3,6)$ & $\OG_+(6,12)$ & $\rE_7/\rP_7$   \\
\hline
        $\rk(K_0(X_m))$ &8 & 20 & 32 & 56
    \end{tabular}
\end{center}

Excluding $LG(3,6)$, that does not quite fit into this picture, we have $\rk(K_0(X_m))=6m+8$. We would expect that for $m=2,4,8$ the derived category of $X_m$ has a full exceptional Lefschetz collection of the following form:
\[
\left(\cA,\cA(1),\cB(2),\ldots,\cB(2m+1)\right)
\]
with:
\begin{equation} \label{collection with m}
  \begin{aligned}
    & \cA = \left( \cO_X, O, P, Q \right) \\
    & \cB = \left( \cO_X, O, P \right)
  \end{aligned}
\end{equation}

Here, denoting by $\cU_\omega$ the irreducible $\rG$-homogeneous bundle of maximal weight $\omega$, the bundles $O$, $P$ and $Q$ should be, respectively, $\cU_{\omega_1}$, $\wedge^2 \cU_{\omega_1}$ and $S^{2,1} \cU_{\omega_1}$ with the caveat that, in case such bundles are not exceptional, we should replace them by some equivariant extension with homogenous bundles of lower maximal weight (for precise definitions see the next section) or projections on the semiorthogonal summand we are interested in.
For $m=2$, i.e. for $G(3,6)$, no extension is necessary. The resulting full exceptional collection was studied in \cite{Deliu:PhD} in the attemt to verify Homological Projective Duality for $G(3,6)$.
On the other hand, this gets more tricky for $m=4$ and $m=8$.

The goal of this paper is to prove the statement for $m=4$ and provide a partial proof of a closely related statement for $m=8$. For $m=4$ we prove:
\begin{maintheorem}
Let $X = \OG_+(6,12)$ and set $O=\cU_{\omega_1}$. Then, there are unique $\Spin_{12}$-homogeneous exceptional bundles $P$ and $Q$ fitting into:
\[
    0 \to \cO_X \to P \to \cU_{\omega_2} \to 0, \qquad     0 \to \cU_{\omega_1} \to Q \to \cU_{\omega_1+\omega_2} \to 0,
\]
such that, defining  $\cA$ and $\cB$ as in \eqref{collection with m}, we get a full Lefschetz exceptional collection:
\[
\Db(X)=\langle \cA,\cA(1),\cB(2),\ldots,\cB(9)\rangle.
\]
Moreover, $Q'=\rL_{\langle \cB \rangle}(Q)$ is a homogeneous exceptional bundle and 
$Q$ and $Q'(1)$ are completely orthogonal.
\end{maintheorem}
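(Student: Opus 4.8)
The plan is to split the theorem into three assertions: (i) existence and uniqueness of $\Spin_{12}$-homogeneous exceptional bundles $P$ and $Q$ fitting into the two prescribed short exact sequences; (ii) the semiorthogonal collection $\langle \cA,\cA(1),\cB(2),\ldots,\cB(9)\rangle$ is exceptional, Lefschetz, and full; (iii) the left mutation $Q'=\rL_{\langle \cB\rangle}(Q)$ is a homogeneous exceptional bundle and $Q$, $Q'(1)$ are completely orthogonal. I would attack these in this order, since (ii) is needed to even make sense of the mutation in (iii).

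First, for (i), I would classify $\Spin_{12}$-homogeneous extensions of $\cU_{\omega_2}$ by $\cO_X$ and of $\cU_{\omega_1+\omega_2}$ by $\cU_{\omega_1}$ using the Borel--Weil--Bott theorem on $X=\OG_+(6,12)$: the relevant $\Ext^1$-groups are computed as cohomology of homogeneous bundles, and I expect each to be one-dimensional, forcing $P$ and $Q$ to be unique nontrivial extensions. Then I would verify $\Ext^\bullet(P,P)=\Ext^\bullet(Q,Q)=\bC$ by the long exact sequences coming from the defining triangles together with vanishing of the cross terms $\Ext^\bullet(\cO_X,\cU_{\omega_2})$, $\Ext^\bullet(\cU_{\omega_2},\cO_X)$, etc., again via Bott's theorem; the nontriviality of the extension class is exactly what kills the $\Ext^1$ that would otherwise obstruct exceptionality. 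For (ii), exceptionality of the whole collection and the semiorthogonality relations $\Ext^\bullet(E(i),F(j))=0$ for the appropriate $i>j$ reduce, by homogeneity, to a finite list of Bott-type vanishings for the bundles $\cO_X, O, P, Q$ twisted by $\cO_X(-t)$, $t=1,\ldots,9$; I would organize these in a table and check them weight by weight. The Lefschetz structure is then automatic from the way $\cA$ and $\cB$ are defined in \eqref{collection with m}.

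The hard part will be fullness. I would establish it by showing the collection generates $\Db(X)$, and here I expect to use one of the standard strategies: either (a) exhibit a resolution of the diagonal $\cO_{\Delta}$ on $X\times X$ built from the listed bundles, or more realistically (b) use a staircase/Koszul argument — restrict to a hyperplane section or a suitable sub-grassmannian, invoke known full collections there (e.g.\ on $G(3,6)$, the $m=2$ case, or on lower orthogonal grassmannians via \cite{kuznetsov:isotropic}) and push forward, or (c) count: since $\rk K_0(X)=32$ and the proposed collection has $4+4+3\cdot 8=32$ objects, it suffices to prove the collection is \emph{admissible and its right orthogonal vanishes}. I would lean on (b)/(c): produce enough admissible Lefschetz sub-collections coming from the projection to a smaller isotropic grassmannian, and use the Fano index $\iota_X=10$ together with the twist range $t=0,\ldots,9$ to force the orthogonal complement to be zero. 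This is the step most likely to require genuinely new geometric input about $\OG_+(6,12)$ rather than bookkeeping.

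Finally, for (iii), once (ii) gives that $\langle\cB\rangle$ is admissible, the mutation $Q'=\rL_{\langle\cB\rangle}(Q)$ is defined by the triangle with the evaluation from $\langle\cB\rangle$; I would identify $Q'$ explicitly as a homogeneous bundle by computing the relevant $\Ext$-groups $\Ext^\bullet(\cB,Q)$ (Bott again) and reading off the cone, then check $\Ext^\bullet(Q',Q')=\bC$ directly. Complete orthogonality of $Q$ and $Q'(1)$, i.e.\ $\Ext^\bullet(Q,Q'(1))=\Ext^\bullet(Q'(1),Q)=0$, would follow from the defining triangle of $Q'$ combined with the semiorthogonality relations already proved in (ii): one direction is a consequence of $Q'$ lying in $\langle\cB\rangle^\perp$ inside the relevant subcategory, and the other from the Lefschetz relations governing how $\cA(1)$ and $\cB(2)$ interact across the twist. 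I expect this last part to be short once the cohomology tables from (i) and (ii) are in hand.
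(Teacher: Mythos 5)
Your three-part plan matches the overall architecture of the paper's proof, and the restriction-to-subvariety strategy you favor in (b) for fullness is indeed what the paper uses: one covers $X$ by copies of $\OG(5,10)$ cut out by general sections of a bundle $\cE^\vee$ coming from a choice of non-isotropic $w\in V$, runs the Koszul complex and the projection formula, and invokes the known full collection on the spinor $10$-fold. However, there are two genuine gaps you have not anticipated.

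First, the claim that exceptionality and semiorthogonality reduce to ``a finite list of Bott-type vanishings'' undersells what is needed. Borel--Bott--Weil alone shows $\Ext^1(\cU_{\omega_1+\omega_2},\cU_{\omega_1})\simeq\bC$, but to conclude that $Q$ is exceptional one must prove the Yoneda cup-product
\[
\Ext^1_X(\cU_{\omega_1+\omega_2},\cU_{\omega_1}) \otimes \Hom_X(\cU_{\omega_1},\cU_{\omega_1+\omega_2}) \longrightarrow \Ext^1_X(\cU_{\omega_1+\omega_2},\cU_{\omega_1+\omega_2})
\]
is an isomorphism, and similar non-degeneracy is needed to kill $\Ext^\bullet_X(Q,P)$. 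These are not vanishing statements and do not follow from BBW; the paper invokes the theorem of Dimitrov and Roth on non-vanishing of cup-products of line-bundle cohomology on the complete flag $\Spin_{12}/B$. Your heuristic that ``the nontriviality of the extension class is exactly what kills the $\Ext^1$'' is correct, but turning it into a proof requires this extra input, not just Bott.

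Second, for fullness the delicate step is showing $\cU^\vee(t) \otimes \wedge^j \cE^\vee \in \cD$ for $t\in[0,7]$ and all $j$, which is what the Koszul/projection-formula argument feeds on. Reaching those inclusions forces a long chain of lemmas placing the Schur functors $\Sigma^\lambda\cU^\vee(t)$ into $\cD$, and the crucial bootstrapping device is an explicit $\rG$-equivariant exact complex
\[
0 \to Q(-2) \to \cU^\vee(-1)\otimes S^+ \to \wedge^3 V \otimes \cO_X \to T \to \Sigma^{2,1}\cU^\vee \to 0,
\]
produced by a fiberwise computation with the Clifford action on spinors. Nothing in your plan plays this role. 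Without it the induction cannot start: from the exceptional collection itself one only gets $\Sigma^{2,1}\cU^\vee(t)\in\cD$ for $t\in[0,1]$, far short of the needed $[0,9]$, and the downstream inclusions for $\Sigma^{2,2}$, $\Sigma^{3,1}$, $\Sigma^{2,1,1}$ never materialize. The same complex reappears in identifying $Q' = \rL_{\langle\cB\rangle}(Q)$, and the proof of complete orthogonality again calls on Dimitrov--Roth (and the structure of $\Ker(T\to\cU_{\omega_1+\omega_2})$), so part (iii) is also substantially heavier than ``short once the cohomology tables are in hand.''
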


This overall proves \cite{kuznetsov-smirnov:residual_classical, kuznetsov-smirnov:residual_grassmannian}, including the statement about the complete orthogonality of the generators of the residual category with respect to the rectangular part of the Lefschetz collection.

For $m=8$ and $X:=X_8=\rE_7/\rP_7$, we prove a weaker result. Let us define $O$ as the unique non-trivial $\rE_7$-equivariant extension fitting into 
$$0\to \cO_{X}\to O \to \cU_{\omega_1}\to 0.$$

Let us define $P$ as the projection of $\cU_{\omega_3}$ to the left orthogonal of 
$\langle \cO_{X}(1),O(1),\dots,\cO_{X}(18),O(18)\rangle$, and $Q$ as the projection (see Remark \ref{rem_beware}) of $\cU_{\omega_1+\omega_3}$ to the left orthogonal of $\langle \cO_{X}(1),O(1),P(1),\dots,\cO_{X}(18),O(18),P(18)\rangle$.
\begin{maintheorem}
On $X = \rE_7/\rP_7$ the collection $\left( \cO_{X},O,\dots,\cO_{X}(17),O(17)\right)$ is exceptional. Moreover, defining $\cA$ and $\cB$ as in \eqref{collection with m}, we get a numerical exceptional collection of maximal length:
\[
\left(\cA,\cA(1),\cB(2),\ldots,\cB(17)\right), \quad\text{with} \quad K_0(X)=K_0(\langle \cA,\cA(1),\cB(2),\ldots,\cB(17)\rangle).
\]
\end{maintheorem}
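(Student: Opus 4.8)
The plan is to reduce every assertion to the Borel--Weil--Bott theorem on $X=\rE_7/\rP_7$, taking advantage of the fact that $\rP_7$ is cominuscule, so that the isotropy representation $\mathfrak{g}/\mathfrak{p}=\mathfrak{g}_{-1}$ is irreducible and the cohomology of any twist $\cU_\lambda(t)$ of an irreducible homogeneous bundle is governed by a single combinatorial (Bott-chamber) rule. First I would tabulate, once and for all, the groups $H^\bullet(X,\cU_\lambda(t))$ for $\lambda$ ranging over the irreducible constituents of the tensor products $\cU_\mu\otimes\cU_\nu^\vee$ with $\mu,\nu\in\{0,\omega_1,\omega_3,\omega_1+\omega_3\}$ and $|t|\le 36$; everything below is then bookkeeping on this table. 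Borel--Weil--Bott gives $H^\bullet(X,\cO_X(-t))=0$ for $0<t<\iota_X=18$, so the rectangular sub-collection of line bundles is exceptional, and it gives $\Ext^1_X(\cU_{\omega_1},\cO_X)=H^1(X,\cU_{\omega_1}^\vee)=\bC$ together with the vanishing of the remaining $\Ext^i$; the resulting unique non-split $\rE_7$-equivariant extension is $O$, and by construction the parasitic cohomology of $\cU_{\omega_1}^\vee$ is cancelled, i.e. $H^\bullet(X,O^\vee)=0$. Feeding the defining sequence of $O$, tensored with $O^\vee$ and with $\cO_X(-t)$ for $1\le t\le 17$, into the long exact sequences and the table, I would check $\Ext^\bullet(O,O)=\bC$ and all the remaining mutual vanishings, obtaining that $(\cO_X,O,\cO_X(1),O(1),\dots,\cO_X(17),O(17))$ is exceptional; twisting, this also makes $\langle\cO_X(1),O(1),\dots,\cO_X(18),O(18)\rangle$ exceptional, which is what is needed for the definitions of $P$ and $Q$ to make sense.

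Next, since $P$ and $Q$ are by definition projections (mutations) of $\cU_{\omega_3}$ and $\cU_{\omega_1+\omega_3}$ with respect to the exceptional collection just constructed, their classes in $K_0(X)$ are computed by the $K$-theoretic mutation formula: $[P]$ is the explicit integer combination $[\cU_{\omega_3}]-\sum_j c_j[G_j]$ (over the relevant exceptional collection $\{G_j\}$) singled out by $\chi$-orthogonality to the $[G_j]$, and similarly for $[Q]$; these are completely explicit once the Euler pairings $\chi(\cU_\lambda(i),\cU_\mu(j))$ are read off the table. With the $56$ classes of the collection $(\cA,\cA(1),\cB(2),\dots,\cB(17))$ in hand, I would assemble their $56\times 56$ Gram matrix $\bigl(\chi(E_i,E_j)\bigr)_{i,j}$ and verify that, in the given order, it is upper triangular with $1$'s on the diagonal: this is exactly the statement that the collection is numerical exceptional, and its length $56$ equals $\rk K_0(X)$, so it is of maximal length.

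For the equality $K_0(X)=K_0(\langle\cA,\cA(1),\cB(2),\dots,\cB(17)\rangle)$ I would argue as follows. By the Schubert-cell decomposition $K_0(X)$ is free abelian of rank $|W/W_{\rP_7}|=56$; fixing any $\bZ$-basis and writing the $56$ classes $[E_i]$ as the rows of an integer matrix $M$, the Gram matrix of the previous step equals $M\,\Gamma\,M^{\mathrm t}$, where $\Gamma$ is the integer matrix of the Euler form in that basis. Since $M\Gamma M^{\mathrm t}$ is unipotent upper triangular, $1=\det(M\Gamma M^{\mathrm t})=(\det M)^2\det\Gamma$, which forces $\det M=\pm1$; hence the $[E_i]$ form a $\bZ$-basis of $K_0(X)$, and both the maximal-length and the $K_0$-generation statements follow.

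The main obstacle is twofold. Practically, the Borel--Weil--Bott bookkeeping for $\rE_7$ is heavy: the Weyl-group orbits are large and even listing the irreducible summands of the tensor products entering the $\Ext$-complexes requires a computer algebra system, so the argument is necessarily machine-assisted. Conceptually — and this is the genuinely delicate point, paralleling the $m=4$ case — the bundles $\cU_{\omega_1}$, $\wedge^2\cU_{\omega_1}$ and $S^{2,1}\cU_{\omega_1}$ are not exceptional on $X$, so one must show that the prescribed extension defining $O$ and the prescribed mutations defining $P$ and $Q$ genuinely remove every parasitic cohomology group; getting the extension/projection data exactly right is the crux. Finally, I would emphasise that this is only a numerical result: proving that $(\cA,\cA(1),\cB(2),\dots,\cB(17))$ generates $\Db(X)$ would require controlling the triangulated category itself (e.g. via a resolution of the diagonal or an inductive fibration argument), which is out of reach here — hence the weaker conclusion for $m=8$.
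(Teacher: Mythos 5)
Your proposal is correct and follows the paper's own approach: use Borel--Bott--Weil to check that $(\cO_X,O,\dots,\cO_X(17),O(17))$ is genuinely exceptional, then work numerically in $K_0$ (machine-assisted, as the paper does with a Python/LiE script in Propositions~\ref{first_exc_coll_E7}--\ref{prop_E7_second_coll}) to define $P$ and $Q$ by $K$-theoretic mutation and verify that the $56\times 56$ Euler-pairing Gram matrix is unitriangular. You additionally spell out the unimodularity step $(\det M)^2\det\Gamma=1\Rightarrow\det M=\pm1$, deducing $K_0$-generation from the cell decomposition of $\rE_7/\rP_7$, which the paper leaves implicit behind the phrase ``maximal length.''
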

Here, by numerical exceptional collection we mean a collection $E_1,\dots,E_r$ whose numerical properties reproduce those of an exceptional collection: $\chi(E_i,E_j)=0$ if $i>j$ and $\chi(E_i,E_i)=1$ for all $i$. Of course having a numerical exceptional collection is a priori a much weaker condition than having an exceptional collection (not to mention having a full exceptional collection). However, due to the analogy with the other cases of the Freudenthal magic square, we believe that this collection is indeed a full exceptional collection. 

The paper is organised as follows. In Section \ref{section:collection} we define our Lefschtez collection. The main tools are the theorem of Borel-Bott-Weil and a result about non-degeneracy of cup-product owing to Dimitrov and Roth. In Section \ref{warming up} we outline our strategy to prove fullness and use to reprove fullness of a natural Lefschetz collection on $OG_+(5,10)$. Here we use a complex constructed in Section \ref{section:dissecting}, where we also construct an analogous complex for $OG_+(6,12)$ which in turn we use Sections \ref{more} and \ref{section:fullness}.
In Section \ref{more} we show that certain homogeneous bundles belong to the subcategory $\cD$ generated by our exceptional collection. We use this in Section \ref{section:fullness} to prove fullness of our collection on $OG_+(6,12)$.
In Section \ref{section:E7} we provide some remarks on our numerical exceptional collection on the Freudenthal variety $\rE_7/\rP_7$.
\begin{ack}
V.B. and D.F. partially supported by FanoHK ANR-20-CE40-0023, SupToPhAG/EIPHI ANR-17-EURE-0002, Région Bourgogne-Franche-Comté, Feder Bourgogne and Bridges ANR-21-CE40-0017.
We warmly thank Sasha Kuznetsov for useful 
discussions.
\end{ack}

\section{A Lefschetz exceptional collection on the spinor 15-fold}
\label{section:collection}

Here we begin by sketching the exceptional collection we want to work with.
We first fix some setting about spinor varieties and homogeneous bundles over them, then define the bundles appearing in the desired Lefschetz collection and finally show that this is indeed an exceptional Lefschetz collection.

\subsection{Homogeneous bundles on spinor varieties} 
We consider the group $\Spin_{2n}$, namely the universal cover of the group of linear automorphisms of $\bC^{2n}$ preserving a non-degenerate quadratic form $q$. Let $\rP_n$ be the parabolic subgroup of $\Spin_{2n}$ defining the spinor Grassmannian $X=\Spin_{2n}/\rP_n:=\OG_+(n,2n)$, one of the two isomorphic connected components parametrizing $n$ dimensional isotropic subspaces of a $2n$ dimensional subspace endowed with a non-degenerate symmetric form. Let us also denote by $L(\rP_n)$ its Levi factor. 

We will denote by $\cU_\omega$ the homogeneous bundle on $\Spin_{2n}/\rP_n$ associated to the $L(\rP_n)$-weight $\omega$. We write $\cO_X(1):=\cU_{\omega_n}$ and $\cU:=\cU_{\omega_1}^\vee$. These correspond to the ample generator of $\Pic(X)$, providing the equivariant embedding of $X$ into $\bP(V^{\omega_n})$, and to the tautological sub-bundle on $G(n,2n)$, restricted to $X$. Here we denoted by $V^\lambda$ the $\Spin_{2n}$-representation of highest weight $\lambda$.

Unless further notice, we will set $n=6$ from now on and work on $X=\Spin_{12}/\rP_6$.
This is the spinor 15-fold that we are interested in. It is a Fano variety of Picard number one and index 10. The rank of its $K_0$ group is 32.
We note that
\begin{equation} \label{basic-isos}
  \cU = \cU_{\omega_1}^\vee \simeq \cU_{\omega_5}(-1), \quad \wedge^2 \cU \simeq  \cU_{\omega_2}^\vee \simeq \cU_{\omega_4}(-2), \quad
  \Sigma^{2,1} \cU \simeq \cU_{\omega_1 + \omega_2}^\vee \simeq \cU_{\omega_4+\omega_5}(-3)
\end{equation}

We denote by $\rL_\cE(\cF)$ the left mutation of an object $\cF$ about an object $\cE$ of $\Db(X)$.

\subsection{The bundles of the exceptional collection}

Let us introduce the homogeneous vector bundles appearing in our exceptional collection.

\begin{lemma}\label{lemma:canonical-extensions}
  On $X$ we have a canonical $\Spin_{2n}$-equivariant exceptional bundle $P$ fitting into:
  \begin{equation} \label{defP}
    0 \to \cO_X \to P \to \cU_{\omega_2} \to 0.
  \end{equation}
    Moreover, $P^\vee(2)$ is the normal bundle of $X$ inside $\bP(V^{\omega_6})$, while $\cU_{\omega_2}$ is the tangent bundle of $X$.
  \end{lemma}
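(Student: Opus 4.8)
The plan is to first identify $\cU_{\omega_2}$ with the tangent bundle and build $P$ as a universal extension, then prove $P$ is exceptional, and finally deduce the two ``moreover'' assertions.

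\emph{Construction of $P$.} Writing $\cU\hookrightarrow V^{\omega_1}\otimes\cO_X$ for the tautological rank-$6$ subbundle, the tangent space of $\OG_+(6,12)$ at an isotropic $6$-plane $\rV$ is the space of infinitesimal isotropic deformations of $\rV$, which is $\wedge^2\rV^\vee$; hence $T_X\cong\wedge^2\cU^\vee$, and by \eqref{basic-isos} one gets $T_X\cong\cU_{\omega_2}$ and $\Omega^1_X\cong\cU_{\omega_2}^\vee\cong\cU_{\omega_4}(-2)$. Since $X$ is rational homogeneous, $H^q(X,\Omega^p_X)=0$ for $p\neq q$ and $h^{1,1}(X)=1$ (as $\Pic(X)$ has rank one), so $\Ext^i(\cU_{\omega_2},\cO_X)=H^i(X,\Omega^1_X)$ equals $\bC$ for $i=1$ and vanishes for $i\neq 1$; in particular $\Hom(\cU_{\omega_2},\cO_X)=0$. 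Therefore the extension \eqref{defP} exists and is unique up to isomorphism, and since $\Spin_{12}$ acts trivially on the one-dimensional space $\Ext^1(\cU_{\omega_2},\cO_X)$, the bundle $P$ is canonically $\Spin_{12}$-equivariant.

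\emph{Exceptionality of $P$.} By construction $P$ is the universal extension of $\cU_{\omega_2}$ annihilating $\Ext^1(\cU_{\omega_2},\cO_X)$; together with $\Hom(\cU_{\omega_2},\cO_X)=0=\Ext^{\geq 2}(\cU_{\omega_2},\cO_X)$, applying $\Hom(-,\cO_X)$ to \eqref{defP} forces $\Ext^\bullet(P,\cO_X)=0$, since the connecting map $\Hom(\cO_X,\cO_X)\to\Ext^1(\cU_{\omega_2},\cO_X)$ carries $\id_{\cO_X}$ to the non-zero class of \eqref{defP}. Hence $\Ext^\bullet(P,P)\cong\Ext^\bullet(P,\cU_{\omega_2})$. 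Applying $\Hom(-,\cU_{\omega_2})$ to \eqref{defP} and using the vanishing $H^{>0}(X,T_X)=0$ valid on any rational homogeneous space, together with $H^0(X,T_X)=\mathfrak{so}_{12}=V^{\omega_2}$, the long exact sequence shows that ``$\Ext^\bullet(P,P)=\bC$ in degree $0$'' is equivalent to: (i) $H^{\geq 2}\bigl(X,\cHom(\cU_{\omega_2},\cU_{\omega_2})\bigr)=0$; and (ii) the connecting morphism $\delta\colon H^0(X,T_X)\to\Ext^1(\cU_{\omega_2},\cU_{\omega_2})$, given by composition with the class of \eqref{defP}, is an isomorphism. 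Decomposing $\cHom(\cU_{\omega_2},\cU_{\omega_2})=\cU_{\omega_2}^\vee\otimes\cU_{\omega_2}$ into its finitely many irreducible homogeneous summands and running Borel--Bott--Weil on each yields (i) and the identification $\Ext^1(\cU_{\omega_2},\cU_{\omega_2})\cong V^{\omega_2}$; the bijectivity of $\delta$ is then where I would invoke the non-degeneracy of the pertinent cup-product pairing, in the form due to Dimitrov and Roth.

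\emph{Tangent and normal bundles.} The isomorphism $\cU_{\omega_2}\cong T_X$ is the one above. For the normal bundle, let $\cF_1\subset V^{\omega_6}\otimes\cO_X$ be the subbundle whose fibre at $x\in X$ is the affine tangent space at $\hat x$ of the affine cone $\hat X\subset V^{\omega_6}$ over $X$; it is locally free of rank $16$ and fits into $0\to\cO_X(-1)\to\cF_1\to T_X(-1)\to 0$. Restricting to $X$ and twisting by $\cO_X(1)$ the Euler sequence of $\bP:=\bP(V^{\omega_6})$ gives $0\to\cO_X\to V^{\omega_6}\otimes\cO_X(1)\to T_{\bP}|_X\to 0$; the preimage of $T_X\subset T_{\bP}|_X$ is $\cF_1(1)$, whence $0\to\cO_X\to\cF_1(1)\to T_X\to 0$. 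This last extension is the pull-back of the (non-split) Euler sequence along $T_X\hookrightarrow T_{\bP}|_X$, so its class in $\Ext^1(T_X,\cO_X)\cong\bC$ is the image of $c_1(\cO_X(1))\neq 0$ under $H^1(X,\Omega^1_{\bP}|_X)\to H^1(X,\Omega^1_X)$, hence non-zero; by the uniqueness above, $\cF_1(1)\cong P$. Moreover $N_{X/\bP}=T_{\bP}|_X/T_X\cong\bigl(V^{\omega_6}\otimes\cO_X/\cF_1\bigr)(1)$. Now $X$ is a Legendrian subvariety of $\bP^{31}$: the half-spin representation $V^{\omega_6}$ of $D_6$ is symplectically self-dual (as $6\equiv 2\!\pmod 4$), and the cone $\hat X$ is Lagrangian for this $\Spin_{12}$-invariant symplectic form — this is the Legendrian property of the third row of Freudenthal's magic square, see \cite{LANMAN}. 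Hence $\cF_1$ equals its own orthogonal and the form induces $V^{\omega_6}\otimes\cO_X/\cF_1\cong\cF_1^\vee$, so $N_{X/\bP}\cong\cF_1^\vee(1)=\bigl(\cF_1(1)\bigr)^\vee(2)\cong P^\vee(2)$. (Alternatively, since $X$ is cominuscule the osculating filtration of $\hat X$ exhibits both $N_{X/\bP}$ and $P^\vee(2)$ as non-split $\Spin_{12}$-equivariant extensions of $\cO_X(2)$ by $\cU_{\omega_4}$, and $\Ext^1_{\Spin_{12}}(\cO_X(2),\cU_{\omega_4})=H^1(X,\Omega^1_X)=\bC$ is one-dimensional.)

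The one genuinely delicate step is (ii): the bijectivity of $\delta$, equivalently the exceptionality of $P$. This is precisely the point at which Borel--Bott--Weil does not settle the matter and the quantitative non-degeneracy input of Dimitrov--Roth is needed; everything else is the formal homological algebra around the two-term extension \eqref{defP} together with standard vanishing on $G/\rP$.
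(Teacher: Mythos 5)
Your proof is correct, but for the exceptionality of $P$ it takes a genuinely different route from the paper's, and the comparison is instructive.

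You reduce exceptionality to the bijectivity of the connecting map $\delta\colon H^0(X,T_X)\to\Ext^1(\cU_{\omega_2},\cU_{\omega_2})$ and invoke Dimitrov--Roth. The paper avoids this entirely: it uses the symplectic self-duality of $V^{\omega_6}$ (the same Lagrangian/Legendrian input you use only for the normal bundle) to produce the short exact sequence $0\to P(-1)\to V^{\omega_6}\otimes\cO_X\to P^\vee(1)\to 0$. Tensoring by $P(-1)$ and combining with a direct BBW computation showing that $P\otimes P(-2)$ has cohomology $\bC$ concentrated in degree $1$ while $P(-1)$ is acyclic, the long exact sequence immediately yields $\Ext^\bullet_X(P,P)=\bC[0]$. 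This is shorter, it never needs the identification $\Ext^1(\cU_{\omega_2},\cU_{\omega_2})\cong V^{\omega_2}$ (which you assert from BBW but do not verify), and it shows that for this particular bundle the Dimitrov--Roth non-degeneracy machinery -- indispensable elsewhere in the paper, e.g.\ for $Q$ -- is not actually needed. Incidentally, even within your framework Dimitrov--Roth is overkill at this step: once one knows that both $H^0(X,T_X)$ and $\Ext^1(\cU_{\omega_2},\cU_{\omega_2})$ are the irreducible $\Spin_{12}$-module $V^{\omega_2}$, the $\rG$-equivariant map $\delta$ is either zero or an isomorphism by Schur's lemma, so only non-vanishing of $\delta$ (i.e.\ non-splitness of the induced extension) needs to be checked. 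For the identification of $P^\vee(2)$ with $N_{X/\bP}$ both you and the paper use the self-duality of $V^{\omega_6}$; your discussion via the affine tangent sheaf $\cF_1$ of the cone is more explicit than the paper's one-line appeal to ``autoduality of $V^{\omega_6}$'' and is a welcome unpacking, but the underlying mechanism is the same. What the paper's approach buys is economy: the Lagrangian duality does double duty, giving both the normal-bundle identification and, via the tensored sequence, the exceptionality, with a single clean cohomology count.
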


\begin{proof}
The tangent bundle of $X$ is well-known to be $\wedge^2\cU_{\omega_1} \simeq \cU_{\omega_2}$, while the tangent bundle of $\bP(V^{\omega_6})$ restricted to $X$ is the quotient $V^{\omega_6}\otimes \cO_X(1)/\cO_X$. Since the irreducible factors of $V^{\omega_6}\otimes \cO_X(1)$ are $\cO_X$, $\cU_{\omega_2}$, $\cU_{\omega_2}^\vee(2)$ and $\cO_X(2)$, we obtain that the normal bundle $\cN$ of $X$ inside $\bP(V^{\omega_6})$ is a $\Spin_{12}$-equivariant extension $\gamma \in \Ext_X^1(\cU_{\omega_2},\cO_X)$ giving:
\[
0 \to \cU_{\omega_2}^\vee(2) \to \cN \to \cO_X(2) \to 0
\] 

By Bott-Borel-Weil (BBW) Theorem, we have $\Ext_X^1(\cO_X(2),\cU_{\omega_2}^\vee(2))=H^1(X,\cU_{\omega_2}^\vee)\simeq \bC$. Hence the sheaf fitting as middle term of a non-trivial extension as above is unique. Since $\cN(-1)$ is a quotient of $V^{\omega_6}\otimes \cO_X$ of half its rank, by autoduality of $V^{\omega_6}$ we get an exact sequence 
$$ 0 \to \cN^\vee(1) \to V^{\omega_6}\otimes \cO_X \to \cN(-1) \to 0. $$
Since $\cO_X(-2)$ and $\cU_{\omega_2}(-2)$ have no cohomology, $\cN^\vee$ has no cohomology as well. From the short exact sequence above we deduce that $\cN$ is a non-trivial extension, and thus $\cN=P^\vee(2)$; indeed, if it were not the case, one would deduce that $\bC\simeq H^0(\cO_X)\simeq H^0(\cN(-2))\simeq V^{\omega_6}\otimes H^0(\cO_X(-1))$, which is false. 

Since 
$\cU_{\omega_2}(-2)$ has no cohomology and $\cU_{\omega_2}\otimes \cU_{\omega_2}(-2)$ has no cohomology except for 
$H^1(\cU_{\omega_2}\otimes \cU_{\omega_2}(-2))=\bC$, we get that 
$\cU_{\omega_2}\otimes P(-2)$ has no cohomology except for 
$H^1(\cU_{\omega_2}\otimes P(-2))=\bC$. By twisting the exact sequence defining $P$ by $P(-2)$ we deduce that $P\otimes P(-2)$ has no cohomology except for $H^1(P\otimes P(-2))=\bC$. Now let us consider the exact sequence
$$ 0\to P\otimes P(-2) \to V^{\omega_6}\otimes P(-1) \to P^\vee\otimes P \to 0 .$$
Since $\cO_X(-1)$ and $\cU_{\omega_2}(-1)$ have no cohomology, the same is true for $P(-1)$ and $V^{\omega_6}\otimes P(-1)$. We deduce that $H^0(P^\vee \otimes P)=\bC$ and all other cohomologies of $P^\vee \otimes P$ vanish.
\end{proof}

\begin{lemma} \label{lemma:canonical-extensionQ}
  On $X$, we have a $\Spin_{12}$-homogeneous exceptional bundle $Q$ fitting into a canonical equivariant extension:
  \begin{equation} \label{defQ}
    0 \to \cU_{\omega_1} \to Q \to \cU_{\omega_1 + \omega_2} \to 0
  \end{equation}
    Moreover, we have $\Ext_X^\bullet(Q,Q(-1))=0$.
\end{lemma}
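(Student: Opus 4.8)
The plan is to run the same machine that proved Lemma \ref{lemma:canonical-extensions}, adapted to the pair $(\cU_{\omega_1},\cU_{\omega_1+\omega_2})$. First I would pin down the extension. Decompose $\cU_{\omega_1+\omega_2}^\vee\otimes\cU_{\omega_1}$ into irreducible $\Spin_{12}$-homogeneous bundles over $X=\OG_+(6,12)$ — using, if convenient, the identifications $\cU_{\omega_1+\omega_2}^\vee\simeq\Sigma^{2,1}\cU\simeq\cU_{\omega_4+\omega_5}(-3)$ and $\cU_{\omega_1}^\vee\simeq\cU_{\omega_5}(-1)$ from \eqref{basic-isos} to make the Levi-weights cleaner — and apply Borel--Bott--Weil to each summand. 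The expectation is that exactly one summand is acyclic-except-in-degree-one with one-dimensional $H^1$, so that
\[
\Hom_X(\cU_{\omega_1+\omega_2},\cU_{\omega_1})=0,\qquad \Ext^1_X(\cU_{\omega_1+\omega_2},\cU_{\omega_1})\simeq\bC,\qquad \Ext^{\geq2}_X(\cU_{\omega_1+\omega_2},\cU_{\omega_1})=0.
\]
One-dimensionality of the $\Spin_{12}$-stable $\Ext^1$ forces the non-trivial extension to be unique and equivariant; we take it as the definition of $Q$, giving \eqref{defQ}.

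Next I would prove $Q$ is exceptional by computing $\Ext^\bullet_X(Q,Q)=H^\bullet(X,Q^\vee\otimes Q)$. As in the $P$ argument, tensor \eqref{defQ} with $\cU_{\omega_1}^\vee$ and with $\cU_{\omega_1+\omega_2}^\vee$, then tensor the two resulting sequences with $Q$; this reduces everything to the cohomology of the four bundles $\cU_{\omega_1}^\vee\otimes\cU_{\omega_1}$, $\cU_{\omega_1}^\vee\otimes\cU_{\omega_1+\omega_2}$, $\cU_{\omega_1+\omega_2}^\vee\otimes\cU_{\omega_1}$, $\cU_{\omega_1+\omega_2}^\vee\otimes\cU_{\omega_1+\omega_2}$, each treated by irreducible decomposition plus BBW. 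I expect to find $H^\bullet(\cU_{\omega_1}^\vee\otimes\cU_{\omega_1})=\bC$ (so $\cU_{\omega_1}$ is exceptional), $H^\bullet(\cU_{\omega_1}^\vee\otimes\cU_{\omega_1+\omega_2})=0$, and that the two ``mixed'' bundles contribute, beyond what cancels, only a single $\bC$ in degree one traceable to the class of \eqref{defQ}. The point — exactly as in Lemma \ref{lemma:canonical-extensions} — is that the relevant connecting homomorphisms are cup product with the generator of $\Ext^1_X(\cU_{\omega_1+\omega_2},\cU_{\omega_1})$, hence nonzero: this is where the non-degeneracy of cup-products of Dimitrov--Roth enters (alternatively, a nonzero connecting map follows since an equivariant splitting would contradict the vanishing already established). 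The spurious copies of $\bC$ then cancel and we get $\Hom_X(Q,Q)=\bC$, $\Ext^{>0}_X(Q,Q)=0$.

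For the last assertion $\Ext^\bullet_X(Q,Q(-1))=0$, the same reduction after twisting by $\cO_X(-1)$ shows it is enough that each of $\cU_{\omega_1}^\vee\otimes\cU_{\omega_1}(-1)$, $\cU_{\omega_1}^\vee\otimes\cU_{\omega_1+\omega_2}(-1)$, $\cU_{\omega_1+\omega_2}^\vee\otimes\cU_{\omega_1}(-1)$ and $\cU_{\omega_1+\omega_2}^\vee\otimes\cU_{\omega_1+\omega_2}(-1)$ be acyclic; this is a direct BBW computation on the irreducible summands (the extra $-1$ twist pushes all shifted weights onto walls or into the forbidden region), and the two exact sequences built from \eqref{defQ} then yield $H^\bullet(X,Q^\vee\otimes Q(-1))=0$.

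The main obstacle I anticipate is the second step: the decomposition of $\cU_{\omega_1+\omega_2}^\vee\otimes\cU_{\omega_1+\omega_2}$ (and of $\cU_{\omega_1+\omega_2}^\vee\otimes\cU_{\omega_1}$) into irreducibles has several terms, and one needs careful BBW bookkeeping to see which $\rho$-shifted weights are singular, which contribute cohomology, and in which degree — and then to check that every connecting map in the long exact sequences is nonzero, so that the leftover cohomology truly cancels rather than survives. By contrast the acyclicity computations in the first and third steps should be routine.
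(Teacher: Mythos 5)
Your overall strategy is the paper's strategy: decompose $\cU_{\omega_1+\omega_2}^\vee\otimes\cU_{\omega_1}$ by BBW to get a one-dimensional $\Ext^1$, take $Q$ to be the unique equivariant non-split extension, then establish exceptionality by tensoring \eqref{defQ} with duals, reducing to BBW on the four mixed tensor bundles, and invoking Dimitrov--Roth to control the boundary maps; the $\Ext^\bullet(Q,Q(-1))=0$ step by direct acyclicity is also what the paper does. So you have the right blueprint. Two of your specific predictions, however, are off in a way that matters.

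First, $H^\bullet(\cU_{\omega_1}^\vee\otimes\cU_{\omega_1+\omega_2})$ is \emph{not} zero: using $\cU_{\omega_1}^\vee\simeq\cU_{\omega_5}(-1)$ one finds $\cU_{\omega_1+\omega_2}\otimes\cU_{\omega_5}\simeq\cU_{\omega_2}(1)\oplus\cU_{2\omega_1}(1)\oplus\cU_{\omega_1+\omega_2+\omega_5}$, so $\Hom_X(\cU_{\omega_1},\cU_{\omega_1+\omega_2})\simeq V^{\omega_2}\oplus V^{2\omega_1}$ in degree $0$. This is not a cosmetic slip: if that $\Hom$ were zero the exceptionality of $Q$ would reduce to a trivial cancellation, whereas the genuine content of the proof is precisely that this two-piece $\Hom$ must be killed by cupping with $\zeta$. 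Correspondingly $\Ext^1_X(\cU_{\omega_1+\omega_2},\cU_{\omega_1+\omega_2})\simeq V^{\omega_2}\oplus V^{2\omega_1}$, and the boundary map in your long exact sequence is the Yoneda product
\[
\Ext^1_X(\cU_{\omega_1+\omega_2},\cU_{\omega_1})\otimes\Hom_X(\cU_{\omega_1},\cU_{\omega_1+\omega_2})\longrightarrow\Ext^1_X(\cU_{\omega_1+\omega_2},\cU_{\omega_1+\omega_2}),
\]
i.e.\ a $\Spin_{12}$-equivariant map $V^{\omega_2}\oplus V^{2\omega_1}\to V^{\omega_2}\oplus V^{2\omega_1}$.

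Second, and because of the above, your parenthetical ``alternative'' to Dimitrov--Roth (a nonzero connecting map follows from non-splitness) is not enough. By Schur's lemma that boundary map is a pair of scalars $(\lambda,\mu)$ on the two irreducible factors, and exceptionality needs \emph{both} to be nonzero. Non-splitness of $\zeta$ only gives that the pair is not $(0,0)$. The paper resolves this by passing to the full flag variety and applying Dimitrov--Roth separately to the two cup-product maps $H^1(\cL_{\omega_4-2\omega_6})\otimes H^0(\cL_{\omega_2})\to H^1(\cL_{\omega_4-2\omega_6}\otimes\cL_{\omega_2})$ and $H^1(\cL_{\omega_4-2\omega_6})\otimes H^0(\cL_{2\omega_1})\to H^1(\cL_{\omega_4-2\omega_6}\otimes\cL_{2\omega_1})$, establishing that each scalar is nonzero. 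You should keep Dimitrov--Roth as the essential ingredient here and drop the fallback argument, which would not close the gap.
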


\begin{proof}
We recall \eqref{basic-isos} and use:
\begin{equation}
  \label{45*1}
  \cU_{\omega_4+\omega_5} \otimes \cU_{\omega_1} \simeq
  \cU_{2\omega_5}(1) \oplus   \cU_{\omega_4}(1) \oplus   \cU_{\omega_1+\omega_4+\omega_5}.
\end{equation}
 
We compute $H^\bullet(\cU_{2\omega_5}(-2))=H^\bullet(\cU_{\omega_1+\omega_4+\omega_5}(-3))=0$, hence:
\begin{equation}
  \label{ext1V}
  \Ext_X^\bullet(\cU_{\omega_1+\omega_2},\cU_{\omega_1}) = 
  \Ext_X^1(\cU_{\omega_1+\omega_2},\cU_{\omega_1}) 
  \simeq  H^1(\cU_{\omega_4+\omega_5} \otimes \cU_{\omega_1}(-3)) \simeq H^1(\cU_{\omega_4}(-2))= \bC.
\end{equation}

Choosing a nonzero element $\zeta$ of
$\Ext_X^1(\cU_{\omega_1+\omega_2},\cU_{\omega_1}) \simeq \bC$  defines the desired  equivariant vector bundle $Q$.

\medskip

To compute $\Ext_X^\bullet(Q, Q)$, we consider:
  \begin{align}
    \nonumber \cU_{\omega_1+\omega_2}^\vee \otimes \cU_{\omega_1} & \simeq  \cU_{\omega_1}\otimes \cU_{\omega_4+\omega_5} (-3), \\
    \label{the second} \cU_{\omega_1}^\vee \otimes \cU_{\omega_1+\omega_2} & \simeq \cU_{\omega_1+\omega_2}\otimes \cU_{\omega_5}(-1),\\
    \label{the third}   \cU_{\omega_1+\omega_2}^\vee \otimes \cU_{\omega_1+\omega_2} & \simeq
    \cU_{\omega_1+\omega_2} \otimes \cU_{\omega_4+\omega_5}(-3).
  \end{align}
  We computed the first item and its
  cohomology in \eqref{45*1} and \eqref{ext1V}.
  Using this, the fact that $\cU_{\omega_1}$ is exceptional and that $Q$ is defined by the non-zero extension $\zeta$, applying $\Ext_X^\bullet(-,\cU_{\omega_1})$ to the sequence \eqref{defQ} defining $Q$ we get
  \begin{equation} \label{Q vs U*}
      \Ext_X^\bullet(Q,\cU_{\omega_1})=0.
  \end{equation} Therefore:
  \[
    \Ext_X^\bullet(Q, Q) \simeq H^\bullet(Q^\vee \otimes \cU_{\omega_1+\omega_2}).
  \]

  To compute the term on the right-hand-side, we need to compute the
  cohomology of \eqref{the second} and \eqref{the third}.
  For \eqref{the second} we get:
  \begin{align}
    \label{125 I} \cU_{\omega_1+\omega_2} \otimes  \cU_{\omega_5}
    \simeq &\, \cU_{\omega_2}(1)  && H^\bullet(\cU_{\omega_2}) \simeq
    H^0(\cU_{\omega_2}) \simeq V^{\omega_2} , \\
    \label{125 II}\oplus &\, \cU_{2\omega_1}(1) && H^\bullet(\cU_{2\omega_1}) \simeq
    H^0(\cU_{2\omega_1}) \simeq V^{2\omega_1},\\
    \label{125 III}\oplus &\, \cU_{\omega_1+\omega_2+\omega_5}&& H^\bullet(\cU_{\omega_1+\omega_2+\omega_5}(-1))=0.
  \end{align}
  Next, we compute the cohomology of $\cU_{\omega_1+\omega_2}
  \otimes  \cU_{\omega_1+\omega_2}^\vee$. We use
  the duality isomorphisms mentioned above and get: 
    \begin{align*} 
      \cU_{\omega_1+\omega_2} \otimes  \cU_{\omega_4+\omega_5}
      \simeq &\, \cO_X(3)  && \HH^\bullet(\cO_X) = \HH^0(\cO_X) \simeq \bC,\\
      \oplus &\, \cU_{\omega_2+2\omega_5}(1) && \HH^\bullet(\cU_{\omega_2+2\omega_5}(-2))=0,\\
      \oplus &\, \cU_{\omega_2+\omega_4}(1) &&
                                               \HH^\bullet(\cU_{\omega_2+\omega_4}(-2))
                                               =
                                               \HH^1(\cU_{\omega_2+\omega_4}(-2))
      \simeq V^{\omega_2},\\
      \oplus &\, \cU_{\omega_1+\omega_5}(2) ^{\oplus 2} && \HH^\bullet(\cU_{\omega_1+\omega_5}(-1))=0,\\
      \oplus &\, \cU_{\omega_1+\omega_2+\omega_4+\omega_5} && \HH^\bullet(\cU_{\omega_1+\omega_2+\omega_4+\omega_5}(-3))=0, \\
      \oplus &\, \cU_{2\omega_1+2\omega_5}(1) && \HH^\bullet(\cU_{2\omega_1+2\omega_5}(-2))=0,\\
      \oplus &\, \cU_{2\omega_1+\omega_4}(1) &&
                                                \HH^\bullet(\cU_{2\omega_1+\omega_4}(-2))=\HH^1(\cU_{2\omega_1+\omega_4}(-2))
                                                \simeq V^{2\omega_1}.
    \end{align*}

    Having computed this, we get that $\HH^i(Q^\vee \otimes
    \cU_{\omega_1+\omega_2}) = 0$ for all $i>0$ if and only if the
    boundary map induced by $\zeta$:
    \[
      V^{\omega_2}\oplus V^{2\omega_1} \simeq \HH^0(\cU_{\omega_1}^\vee \otimes \cU_{\omega_1+\omega_2}) \to 
      \HH^1(\cU_{\omega_1+\omega_2}^\vee \otimes \cU_{\omega_1+\omega_2}) 
      \simeq V^{\omega_2}\oplus V^{2\omega_1}
    \]
    is an isomorphism, and in this case
    $\HH^0(Q^\vee \otimes \cU_{\omega_1+\omega_2}) \simeq \bC$.
    In other words, 
    $Q$ is exceptional if and
    only if the following Yoneda map is an isomorphism:
    \begin{equation*}
      \Ext_X^1(\cU_{\omega_1+\omega_2},\cU_{\omega_1}) \otimes
      \Hom_X(\cU_{\omega_1}, \cU_{\omega_1+\omega_2}) \to  
      \Ext_X^1(\cU_{\omega_1+\omega_2}, \cU_{\omega_1+\omega_2}) .
    \end{equation*}
    In view of the isomorphisms above, this happens if and only if the cup-product maps below are isomorphisms:
    \begin{align*}
        H^1(\cU_{\omega_4-2\omega_6}) \otimes H^0(\cU_{\omega_2}) & \to H^1(\cU_{\omega_2+\omega_4-2\omega_6}) ,\\ 
    H^1(\cU_{\omega_4-2\omega_6}) \otimes H^0(\cU_{2\omega_1}) & \to H^1(\cU_{2\omega_1+\omega_4-2\omega_6}).
    \end{align*}

Fixing a Borel subgroup $B$ of $\Spin_{12}$, for a given $B$-dominant weight $\omega$, we consider the line bundle $\cL_\omega$ on the complete flag $W=\Spin_{12}/B$ 
and identify the bundle $\cU_\omega$ with the direct image of $\cL_\omega$ via the natural projection $W \to X$. Then our statement boils down to proving that the cup-product maps below are isomorphisms:
    \begin{align*}
        H^1(\cL_{\omega_4-2\omega_6}) \otimes H^0(\cL_{\omega_2}) & \to H^1(\cL_{\omega_4-2\omega_6} \otimes \cL_{\omega_2}), \\ 
    H^1(\cL_{\omega_4-2\omega_6}) \otimes H^0(\cL_{2\omega_1}) & \to H^1(\cL_{\omega_4-2\omega_6} \otimes \cL_{2\omega_1}).
    \end{align*}
However, this follows at once from the main theorem of \cite{dimitrov-roth}.

For what concern $\Ext_X(Q,Q(-1))$, it is sufficient to check that all irreducible bundles in $\cU_{\omega_1+\omega_2}^\vee \otimes \cU_{\omega_1}(-1)$, $\cU_{\omega_1+\omega_2}^\vee \otimes \cU_{\omega_1+\omega_2}(-1)$, $\cU_{\omega_1}^\vee \otimes \cU_{\omega_1}(-1)$, $\cU_{\omega_1}^\vee \otimes \cU_{\omega_1+\omega_2}(-1)$ have no non-vanishing cohomology (by BBW).
    \end{proof} 

\subsection{The exceptional Lefschetz collection}

Let us define the following collections of $\Spin_{12}$-homogeneous vector bundles
\begin{equation*}
  \begin{aligned}
    & \cA = \left( \cO_X, \cU_{\omega_1}, P, Q \right), \\
    & \cB = \left( \cO_X, \cU_{\omega_1}, P \right).
  \end{aligned}
\end{equation*}

\begin{lemma}
  The following is an exceptional collection in $\Db(X)$:
  \[
    \left(\cB, \cB(1), \dots, \cB(9)\right).
  \]
\end{lemma}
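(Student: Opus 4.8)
The collection $\bigl(\cB,\cB(1),\dots,\cB(9)\bigr)$ consists of $30$ objects, namely the three bundles $\cO_X$, $\cU_{\omega_1}$, $P$ twisted by $\cO_X(t)$ for $t=0,\dots,9$. To prove exceptionality we must check that each object is exceptional, and that $\Ext_X^\bullet(\cB_j(s),\cB_i(t))=0$ whenever the pair $(\cB_i(t))$ strictly precedes $(\cB_j(s))$ in the collection; equivalently, for $0\le t<s\le 9$ all three bundles in $\cB(s)$ are right-orthogonal to all three in $\cB(t)$, and within a single block $\cB(t)$ the ordered triple $(\cO_X,\cU_{\omega_1},P)$ is exceptional. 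The key simplification is that, after tensoring, every relevant $\Ext$ group is the cohomology of a homogeneous bundle, so everything reduces to a finite list of Borel--Bott--Weil (BBW) vanishings, together with the fact that $\cO_X$, $\cU_{\omega_1}$, $P$, $Q$ are already known to be exceptional from Lemma \ref{lemma:canonical-extensions} (and Lemma \ref{lemma:canonical-extensionQ} if one wants $Q$, though $Q$ does not appear in $\cB$).

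First I would record the basic cohomological input: by BBW on $X=\Spin_{12}/\rP_6$, the line bundles $\cO_X(-t)$ have no cohomology for $1\le t\le 9=\iota_X-1$, and similarly $\cU_{\omega_1}(-t)$, $\cU_{\omega_2}(-t)$, and (via the sequence \eqref{defP}) $P(-t)$ have vanishing cohomology in that range — this is precisely the statement that $\cO_X(1)$ has ``Lefschetz amplitude'' at least $10$ for these bundles. Then, for $0\le t<s\le 9$, setting $k=s-t$ with $1\le k\le 9$, each $\Ext^\bullet_X(\cB_j(s),\cB_i(t))$ equals $H^\bullet$ of a bundle of the form $\cB_i^\vee\otimes\cB_j(-k)$; expanding $\cB_i^\vee\otimes\cB_j$ into irreducible $\Spin_{12}$-homogeneous summands $\cU_\mu$ using \eqref{basic-isos} and standard plethysm (as already done for $\cU_{\omega_1}^\vee\otimes\cU_{\omega_1+\omega_2}$ etc.\ in the proof of Lemma \ref{lemma:canonical-extensionQ}), one must verify that every $\cU_\mu(-k)$ is acyclic for $1\le k\le 9$. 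The summands that occur are among $\cO_X$, $\cU_{\omega_1}$, $\cU_{\omega_2}$, $\cU_{2\omega_1}$, $\cU_{\omega_1+\omega_2}$, $\wedge^2\cU_{\omega_2}$-type bundles and their various twists coming from $P^\vee\otimes P$, $\cU_{\omega_1}^\vee\otimes P$, $P^\vee\otimes\cU_{\omega_1}$; I would tabulate these and apply BBW weight-by-weight, checking that after adding $-k\omega_6$ the resulting weight is singular (has a zero inner product with some simple root after the $\rho$-shift) for every $k\in\{1,\dots,9\}$. This is the computational heart of the argument but is entirely mechanical.

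Finally, for the intra-block exceptionality of $(\cO_X,\cU_{\omega_1},P)$: $\Hom_X(\cO_X,\cO_X)=\Hom_X(\cU_{\omega_1},\cU_{\omega_1})=\bC$ and higher self-$\Ext$'s vanish because $\cO_X$ and $\cU_{\omega_1}$ are exceptional; $P$ is exceptional by Lemma \ref{lemma:canonical-extensions}. Semiorthogonality $\Ext^\bullet_X(\cU_{\omega_1},\cO_X)=H^\bullet(\cU_{\omega_1}^\vee)=0$, $\Ext^\bullet_X(P,\cO_X)=H^\bullet(P^\vee)=0$, and $\Ext^\bullet_X(P,\cU_{\omega_1})=H^\bullet(P^\vee\otimes\cU_{\omega_1})=0$ all follow from BBW, using the sequence \eqref{defP} to reduce $P^\vee$ to $\cO_X$ and $\cU_{\omega_2}^\vee$, both of which (suitably untwisted) have no cohomology; indeed $H^\bullet(\cU_{\omega_1}^\vee)$, $H^\bullet(\cU_{\omega_2}^\vee)$, $H^\bullet(\cU_{\omega_2}^\vee\otimes\cU_{\omega_1})$ vanish by a direct weight computation. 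The main obstacle, such as it is, is simply organising the bookkeeping of irreducible summands so that no required vanishing is overlooked — but there is no conceptual difficulty, since unlike the exceptionality of $P$ and $Q$ (which genuinely needed the Dimitrov--Roth non-degeneracy input) the orthogonality statements here are pure BBW vanishings within the index range $1\le k\le 9=\iota_X-1$.
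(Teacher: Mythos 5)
Your overall strategy matches the paper's (reduce everything to BBW acyclicity ranges plus the known exceptionality of $\cO_X$, $\cU_{\omega_1}$, $P$), and the inter-block orthogonalities in the range $1\le k\le 9$ are indeed handled exactly as you propose. However, there is a concrete gap in the within-block step, and it is not a bookkeeping oversight but a wrong claim.

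You assert that $H^\bullet(\cU_{\omega_2}^\vee)$ and $H^\bullet(\cU_{\omega_2}^\vee\otimes\cU_{\omega_1})$ vanish ``by a direct weight computation,'' and conclude that $\Ext^\bullet_X(P,\cO_X)=0$ and $\Ext^\bullet_X(P,\cU_{\omega_1})=0$ are pure BBW vanishings. Both cohomology claims are false. Since $\cU_{\omega_2}=T_X$, one has $\cU_{\omega_2}^\vee=\Omega_X$ and $H^1(\cU_{\omega_2}^\vee)=H^1(\Omega_X)\simeq\bC$; the paper itself records $\Ext_X^1(\cO_X(2),\cU_{\omega_2}^\vee(2))=H^1(X,\cU_{\omega_2}^\vee)\simeq\bC$ in the proof of Lemma~\ref{lemma:canonical-extensions}. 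Likewise, using \eqref{basic-isos} and $\cU_{\omega_4}\otimes\cU_{\omega_1}\simeq\cU_{\omega_5}(1)\oplus\cU_{\omega_1+\omega_4}$ one gets $\cU_{\omega_2}^\vee\otimes\cU_{\omega_1}\simeq\cU_{\omega_5}(-1)\oplus\cU_{\omega_1+\omega_4}(-2)$, and $H^1(\cU_{\omega_1+\omega_4}(-2))\simeq V^{\omega_1}\ne 0$ (this non-vanishing is used in the proof of Lemma~\ref{it's exceptional}). As a result the long exact sequences obtained from the dual of \eqref{defP} have non-trivial entries, and the vanishing of $\Ext^\bullet_X(P,\cO_X)$ and $\Ext^\bullet_X(P,\cU_{\omega_1})$ requires the relevant boundary maps to be isomorphisms, i.e.\ it uses the \emph{non-triviality} (indeed a non-degeneracy) of the extension class $\gamma$ defining $P$; it is not a pure BBW vanishing. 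The paper avoids isolating this cup-product computation by first proving $\Ext_X^\bullet(\cO_X(i),P(j))=0$ and $\Ext_X^\bullet(\cU_{\omega_1}(i),P(j))=0$ for the strictly larger range $0\le j<i\le 10$ (the critical $i-j=10$ step is precisely where the non-trivial extension enters), and then applying Serre duality to land on the within-block case $i=j$. You would need either that argument or an explicit non-degeneracy verification to close the gap.

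A minor inaccuracy: you attribute the exceptionality of $P$ to the Dimitrov--Roth input. In fact only $Q$ needs Dimitrov--Roth; the exceptionality of $P$ is proved in Lemma~\ref{lemma:canonical-extensions} by the normal-bundle/Euler-sequence argument without invoking that theorem.
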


\begin{proof}
Recall that $\cO_X$ and $P$ are exceptional. We compute:
  \begin{equation} \label{15}
    \cU_{\omega_5} \otimes \cU_{\omega_1} \simeq  \cO_X(1) \oplus
                                                    \cU_{\omega_1+\omega_5}
  \end{equation}
  We write the largest intervals of integers where the twists of
  the bundles appearing in the right-hand-side have vanishing
  cohomology by BBW.
This gives:
  \begin{align*}
    \HH^\bullet(\cO_X(-t))& =0, && \mbox{for $t \in \{1,\ldots,9\}$},\\
    \HH^\bullet(\cU_{\omega_1+\omega_5}(-t))& =0, && \mbox{for $t
                                              \in \{1,\ldots,11\}$}, 
  \end{align*}
Then, using \eqref{basic-isos}, we get that $\cU_{\omega_1}$ is exceptional. We also get the required vanishing of twisted endomorphisms of $\cO_X$ and $\cU_{\omega_1}$. Also, we have the vanishing of $\Ext_X^\bullet(\cO_X(i),\cU_{\omega_1}(j))$ for $0 \le j < i \le 9$ and of  $\Ext_X^\bullet(\cU_{\omega_1}(i),\cO_X(j))$ for $0 \le j \le i \le 9$.

\medskip It remains to deal with $P$.
Looking at the extension defining $P$ and using BBW, we get $\Ext_X^\bullet(\cO_X(i),P(j))=0$ for $0 \le j < i \le 10$, so Serre duality ensures also $\Ext_X^\bullet(P(i),\cO_X(j))=0$ for $0 \le j \le i \le 9$. 

Next we show $\Ext_X^\bullet(\cU_{\omega_1}(i), P(j)) = 0$ for $0 \le j < i \le 9$ and note that the vanishing holds true even for $i=10$. We recall \eqref{basic-isos} and use $\cU_{\omega_5} \otimes \cU_{\omega_2} \simeq \cU_{\omega_1}(1) \oplus \cU_{\omega_2+\omega_5}$. Then, tensoring the sequence \eqref{defP} defining $P$ with $\cU_{\omega_5}(-1-t)$, for $1 \le t \le 9$, we get the desired vanishing by using:
\[
H^\bullet(\cU_{\omega_5}(-1-t))) = 
H^\bullet(\cU_{\omega_1}(-t))) = 
H^\bullet(\cU_{\omega_2+\omega_5}(-1-t))) = 0.
\]
Now Serre duality gives $\Ext_X^\bullet(P(i),\cU_{\omega_1}(j)) = 0$ for $0 \le j \le i \le 9$.

Finally we check $\Ext_X^\bullet(P(i), P(j)) = 0$ for $0 \le j < i \le 9$.
We compute:
\[
\cU_{\omega_2} \otimes \cU_{\omega_4} \simeq \cO_X(2) \oplus \cU_{\omega_1+\omega_5}(1) \oplus \cU_{\omega_2+\omega_4}.
\]

Tensoring the sequence \eqref{defP} defining $P$ with its dual and using \eqref{basic-isos}, we deduce the desired vanishing results from the following ones, which in turn are given by BBW for $1 \le t \le 9$:
\[
H^\bullet(\cO_X(-t)) = H^\bullet(\cU_{\omega_2}(-t)) = H^\bullet(\cU_{\omega_4}(-2-t)) = H^\bullet(\cU_{\omega_2+\omega_4}(-t)) = H^\bullet(\cU_{\omega_1+\omega_5}(-1-t)) = 0.
\]
\end{proof}

\begin{lemma} \label{it's exceptional}
  The following is an exceptional collection in $\Db(X)$:
  \begin{equation}\label{eq:collection-d6-p6}
    \left(\cA, \cA(1), \cB(2) \dots, \cB(9)\right).
  \end{equation}
\end{lemma}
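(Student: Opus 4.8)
The plan is to leverage the previous lemma and reduce to a finite set of $\Ext$-vanishings involving the block $\cA$ against itself and against the shifted copies of $\cB$. Since we already know that $(\cB,\cB(1),\dots,\cB(9))$ is exceptional, and since $\cA = (\cO_X,\cU_{\omega_1},P,Q)$ extends $\cB$ by the single bundle $Q$, the only new conditions to verify are: (i) $Q$ is exceptional, which is exactly Lemma \ref{lemma:canonical-extensionQ}; (ii) $\Ext_X^\bullet(Q, E)=0$ for every $E \in \{\cO_X,\cU_{\omega_1},P\}$ placed to the right of $Q$ inside the block $\cA$, i.e. in the same twist; (iii) $\Ext_X^\bullet(Q(1), E(1))$ vanishes for the same three $E$ inside $\cA(1)$, which is the same as (ii) by twist-invariance; (iv) $\Ext_X^\bullet(Q, E(1))=0$ and $\Ext_X^\bullet(E, Q(1))=0$ for the interaction between $\cA$ and $\cA(1)$; and (v) $\Ext_X^\bullet(Q(j), E(j+k))=0$ for $E \in \cB$, $1\le k$, coming from the interaction of $\cA$, $\cA(1)$ with the later blocks $\cB(2),\dots,\cB(9)$, together with the symmetric statement that the bundles in $\cA$, $\cA(1)$ are left-orthogonal to nothing further since they sit first. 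By twist-invariance of $\Ext$, these all reduce to finitely many vanishings of the form $\Ext_X^\bullet(Q, E(t)) = 0$ for suitable ranges of $t$, plus the analogous statements with the roles reversed coming from Serre duality.

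The main computational step is therefore: decompose $Q^\vee \otimes E$ into irreducible $\Spin_{12}$-homogeneous bundles for each relevant $E$ and each twist, and apply Borel--Bott--Weil. For this one uses the extension \eqref{defQ} defining $Q$, so that $Q^\vee \otimes E$ sits in a sequence with $\cU_{\omega_1}^\vee \otimes E$ and $\cU_{\omega_1+\omega_2}^\vee \otimes E$ (equivalently, using \eqref{basic-isos}, with $\cU_{\omega_5}(-1)\otimes E$ and $\cU_{\omega_4+\omega_5}(-3)\otimes E$) as outer terms; it suffices that both outer terms, suitably twisted, are acyclic. Concretely:
\begin{itemize}
  \item For the orthogonality of $Q$ against $\cO_X$, $\cU_{\omega_1}$, $P$ inside the block $\cA$ (case (ii)): we must show $\Ext_X^\bullet(Q,\cO_X)=0$, $\Ext_X^\bullet(Q,\cU_{\omega_1})=0$, and $\Ext_X^\bullet(Q,P)=0$. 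The second of these is precisely \eqref{Q vs U*}, already established in the proof of Lemma \ref{lemma:canonical-extensionQ}. The first and third follow by tensoring \eqref{defQ} by $\cO_X$ resp.\ by $P^\vee$ and checking acyclicity of the resulting homogeneous bundles via BBW, using the explicit decomposition of $\cU_{\omega_5}\otimes \cU_{\omega_2}$ and $\cU_{\omega_4+\omega_5}\otimes\cU_{\omega_2}$ already recorded (or easily obtained) above.
  \item For the interaction of $\cA$ with $\cA(1)$ and with the later $\cB(t)$ blocks (cases (iv) and (v)): by Serre duality and twist-invariance these amount to showing $\Ext_X^\bullet(Q, \cO_X(t))=0$, $\Ext_X^\bullet(Q,\cU_{\omega_1}(t))=0$, $\Ext_X^\bullet(Q,P(t))=0$ for $t$ in an explicit range (roughly $1\le t\le 9$), and the symmetric vanishings $\Ext_X^\bullet(E(t),Q)=0$ for $E\in\cB$. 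Since $\iota_X=10$ is large and the bundles involved have rather negative twists after dualizing, these are again routine BBW checks once $\cU_{\omega_5}(-1)\otimes E$ and $\cU_{\omega_4+\omega_5}(-3)\otimes E$ are decomposed for $E\in\{\cO_X,\cU_{\omega_1},\cU_{\omega_2}\}$; the relevant decompositions coincide with \eqref{45*1}, \eqref{15}, and the tensor product identities already used in the preceding lemmas.
\end{itemize}

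The hard part is organizational rather than conceptual: making sure that the ranges of twists for which acyclicity is needed really are covered by BBW, i.e.\ that no weight lands on a wall or produces unexpected cohomology in the relevant degree. In practice this means, for each irreducible summand $\cU_\mu$ appearing in the decompositions of $Q^\vee\otimes E$ and of $E^\vee \otimes Q$, computing the largest interval of twists $t$ with $H^\bullet(\cU_\mu(-t))=0$ and verifying it contains the interval dictated by the Lefschetz structure (the block $\cA$ has length $4$, the block $\cB$ length $3$, and there are $10$ blocks in total with support-width compatible with $\iota_X=10$). Because $Q$ only enters the first two blocks $\cA$, $\cA(1)$, the worst twist one needs is bounded and the Fano index $10$ leaves enough room; the completely explicit weight bookkeeping is the only thing that requires care, and it follows the same pattern as the computations in the proof of the previous lemma. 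Once all these vanishings are in place, combined with the already-proven exceptionality of $(\cB,\cB(1),\dots,\cB(9))$ and of $Q$, the collection \eqref{eq:collection-d6-p6} is exceptional.
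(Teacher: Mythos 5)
Your overall organization is sound and matches the paper's strategy at a high level: reduce to the exceptionality of $Q$ (Lemma~\ref{lemma:canonical-extensionQ}), the already-proven exceptionality of $(\cB,\cB(1),\dots,\cB(9))$, and a finite list of $\Ext$-vanishings involving $Q$ against $\cO_X$, $\cU_{\omega_1}$, $P$ (packaged by Serre duality as $\Ext^\bullet_X(E,Q(-t))=0$ for $E\in\cB$ and $1\le t\le 10$). However, there is a genuine gap in your claim that all of these vanishings are ``routine BBW checks'' resolved by tensoring \eqref{defQ} and checking acyclicity term by term.

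The problem is the single case $\Ext^\bullet_X(Q,P)=0$ (equivalently, by Serre duality, the $t=10$ instance $\Ext^\bullet_X(P,Q(-10))=0$). If you tensor the dual of \eqref{defQ} by $P$ and take cohomology, the two outer terms are \emph{not} acyclic: one finds $\Hom_X(\cU_{\omega_1},\cU_{\omega_2})\simeq V^{\omega_1}$ in degree $0$ and $\Ext^1_X(\cU_{\omega_1+\omega_2},\cU_{\omega_2})\simeq V^{\omega_1}$ in degree $1$, and these do not cancel automatically in the long exact sequence. What one actually needs is that the connecting map (cupping with the extension class $\zeta\in\Ext^1_X(\cU_{\omega_1+\omega_2},\cU_{\omega_1})$) is an \emph{isomorphism} between these two copies of $V^{\omega_1}$; the paper proves this by passing to the complete flag and invoking the non-degeneracy of cup products from \cite{dimitrov-roth}, exactly as in the exceptionality proof of $Q$. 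This is a conceptual ingredient, not bookkeeping. A secondary, more minor, issue: your list of conditions (iv) includes $\Ext^\bullet_X(Q,E(1))=0$ and $\Ext^\bullet_X(E,Q(1))=0$, which go from left to right in the collection and are neither required for semiorthogonality nor true in general; the conditions you actually need are those with the arguments in the opposite order (equivalently, the $\Ext^\bullet_X(E,Q(-t))=0$ form after Serre duality). Fixing these two points would bring your proof in line with the paper's.
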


\begin{proof}
By the previous lemma and thanks to Serre duality, we will be done once we prove $\Ext_X^\bullet(Q,Q(-1))=0$ (which we did in Lemma \ref{lemma:canonical-extensionQ}) and:
  \[
  \Ext_X^\bullet(\cO_X,Q(-t))=\Ext_X^\bullet(\cU_{\omega_1},Q(-t))=
  \Ext_X^\bullet(P,Q(-t))=0,
  \]
  for $1 \le t \le 10$.
Looking at the sequence \eqref{defQ}  defining $Q$, we see that BBW directly implies $H^\bullet(Q(-t))=0$ for $1 \le t \le 10$.
As for $\Ext_X^\bullet(\cU_{\omega_1},Q(-t))=0$, note that the case $t=10$ is \eqref{Q vs U*} by Serre duality. On the other hand, for $1 \le t \le 9$, this follows from \eqref{basic-isos} and from the vanishing 
\[
H^\bullet(\cU_{\omega_5} \otimes \cU_{\omega_1}(-1-t))=0, \quad 
H^\bullet(\cU_{\omega_5} \otimes \cU_{\omega_1+\omega_2}(-1-t))=0,
\]
for $1 \le t \le 9$, which in turn is a consequence of \eqref{125 I}, \eqref{125 II}, \eqref{125 III} and \eqref{15}. 

Finally, let us show that $\Ext_X^\bullet(P,Q(-t))=0$. For $t \ne 10$, this follows from BBW, \eqref{basic-isos} and from the isomorphisms:
\begin{align*}
\cU_{\omega_4} \otimes \cU_{\omega_1} & \simeq \cU_{\omega_5}(1) \oplus \cU_{\omega_1+\omega_4}, \\
\cU_{\omega_4} \otimes \cU_{\omega_1+\omega_2} & \simeq \cU_{\omega_1}(2) \oplus \cU_{\omega_2+\omega_5}(1) \oplus
\cU_{\omega_1+\omega_2+\omega_4} \oplus \cU_{2\omega_1+\omega_5}.
\end{align*}
For $t=10$, the statement is equivalent to $\Ext_X^\bullet(Q,P)=0$. To check this last vanishing, using the isomorphisms of the previous display, we are reduced to show $\Ext_X^\bullet(Q,\cU_{\omega_2})=0$ and in turn to show that cupping with $\zeta \in \Ext_X^1(\cU_{\omega_1+\omega_2},\cU_{\omega_1}) \simeq H^1(\cU_{\omega_4}(-2)) \simeq \bC$ induces an isomorphism:
\[
V^{\omega_1} \simeq H^0(\cU_{\omega_1}) \simeq \Hom_X(\cU_{\omega_1},\cU_{\omega_2}) \to \Ext_X^1(\cU_{\omega_1+\omega_2},\cU_{\omega_2}) \simeq H^1(\cU_{\omega_1+\omega_4}(-2)) \simeq V^{\omega_1}
\]

As in the proof of Lemma \ref{lemma:canonical-extensionQ}, we move to the complete flag $W$, so that showing this non-degeneracy amounts to checking that the cup-product map below is an isomorphism:
\[
H^1(\cL_{\omega_4-2\omega_6}) \otimes H^0(\cL_{\omega_1}) \to H^1(\cL_{\omega_4-2\omega_6} \otimes \cL_{\omega_1}),
\]
and this follows from \cite{dimitrov-roth}.
\end{proof}

Define the full triangulated subcategory
\begin{equation*} 
  \cD = \langle \cA, \cA(1), \cB(2), \dots, \cB(9) \rangle.
\end{equation*}
We are going to show that \eqref{eq:collection-d6-p6} is full, i.e. we have
\begin{equation*}
  \cD^\perp = 0.
\end{equation*}

\section{Warming up for fullness}
\label{warming up}

Let $\cD$ be the full triangulated subcategory of $\Db(X)$ generated by our exceptional
collection, i.e. we define
\begin{equation*}
  \cD = \big \langle \cA, \cA(1), \cB(2), \cB(3), \cB(4) \big \rangle \subset \Db(X).
\end{equation*}
Thus, we have a semiorthogonal decomposition
\begin{equation*}
  \Db(X) = \langle \cD^\perp, \cD \rangle.
\end{equation*}
Our aim is to prove $\cD^\perp = 0$. To achieve this, we will restrict to a covering family of smaller spinor varieties whose derived category is well-known and prove that any object orthogonal to $\cD$ restricts to zero over such varieties by showing that the structure sheaf of these subvarieties is resolved by objects in $\cD$.
After describing such covering family, we will show fullness in the easier and well known case of spinor 10-folds as a warming up. In doing so, we will make use of an exact complex appearing in Section \ref{section:dissecting}, pointing out that the existence of such complex is fundamental for our proof of fullness.

\subsection{A covering family of spinor varieties}

Let us come back to the general case of a vector space $V$ of dimension $2n$. Let
\begin{equation*}
  q \colon V \times V \to \bC
\end{equation*}
be the symmetric bilinear form defining $X:=\Spin_{2n}/\rP_n$, i.e. we have
\begin{equation*}
  X = \OG_+(n,V).
\end{equation*}
Recall that we have
\begin{equation*}
  H^0(X, \cU^\vee) = V^\vee \overset{\simeq}{\to} V.
\end{equation*}
Since $q$ is non-degenerate, there is a bijection between elements $w \in V$ and
sections $s_w \in H^0(X, \cU^\vee) = V^\vee$ that sends $w$ to $s_w = q(w, \quad )$.
It is easy to see that we have
\begin{equation}\label{eq:non-degenerate-not-on-the-quadric}
  q_{\vert W} \, \text{  is non-degenerate} \, \iff \, q(w,w) \neq 0,
\end{equation}
where $W = \ker s_w$. If $s_w$ satisfies \eqref{eq:non-degenerate-not-on-the-quadric},
then we can define two things:
\begin{enumerate}
  \item A morphism of algebraic varieties
  \begin{equation}\label{eq:identification-og-5-11}
    \begin{aligned}
      \varphi_w \colon \OG_+(n,V) & \to \OG(n-1,W) \\
      U & \mapsto U \cap W,
    \end{aligned}
  \end{equation}
  which is an isomorphism.

  \smallskip

  \item The natural morphism $s_w \colon \cO_X \to \cU^\vee$ does not vanish anywhere (since there are no $n$-dimensional isotropic subspaces in $W$) and, therefore,
  defines a short exact sequence of vector bundles
  \begin{equation}\label{eq:non-degenerate-subspace-exact-sequence}
    0 \to \cO_X \overset{s_w}{\to} \cU^\vee \to \cE^\vee \to 0,
  \end{equation}
  where $\cE^\vee$ is a vector bundle of rank $n-1$ with $
    H^0(X, \cE^\vee) = W^\vee$.
\end{enumerate}

Let us fix a section $s_w$ satisfying \eqref{eq:non-degenerate-not-on-the-quadric}.

\begin{lemma}\label{lemma:auxiliary-1}
  Let $s \in H^0(X, \cE^\vee) = W^\vee$ and consider the zero-locus $Y_s$ of $s$.
  \begin{enumerate}[label=\roman*)]
    \item \label{cond-i} If $s$ is general enough, then
    $Y_s \simeq \OG(n-1,2n-2).$
    Let us denote the inclusion by
    $$i_s \colon Y_s \to X.$$
    \item For any section $s$ as in \ref{cond-i}, we have
    \begin{equation*}
      i^* \cU^\vee = \cU_{n-1}^\vee \oplus \cO_{Y_s}
    \end{equation*}

    \item Varying $s \in H^0(X, \cE^\vee)$ as above we can cover $X$ by copies of $\OG(n-1,2n-2)$.

    \item If for any object $F \in \Db(X)$ the restrictions $i_s^* F$ vanish for all
    $s \in H^0(X, \cE^\vee)$ as above, then $F = 0$.
  \end{enumerate}
\end{lemma}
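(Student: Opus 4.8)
The plan is to dispose of the four items in the order stated; the content is in (i) and (iii), while (ii) and (iv) are essentially formal.

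For (i), I would first use $\varphi_w$ to recognise, on $X\simeq\OG(n-1,W)$, the bundle $\cE^\vee$ as the dual of the rank-$(n-1)$ tautological subbundle $\cS\subset W\otimes\cO_X$ (fibre $U\cap W$ at $[U]$): the inclusion $U\cap W\subset U$ gives a canonical exact sequence $0\to\cS\to\cU\to\cO_X\to 0$ whose last map is the everywhere-nonzero composite $\cU\hookrightarrow V\otimes\cO_X\twoheadrightarrow(V/W)\otimes\cO_X\simeq\cO_X$, and dualising and comparing with \eqref{eq:non-degenerate-subspace-exact-sequence} gives $\cE\simeq\cS$. A section $s\in H^0(X,\cE^\vee)=W^\vee$ is, via $q|_W$, a vector $v\in W$, and the support of $Y_s$ is $\{[U']\in\OG(n-1,W):U'\subseteq v^\perp\}$. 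Reading ``$s$ general enough'' as the non-empty open condition $q(v,v)\neq 0$, this locus equals $\OG(n-1,v^\perp\cap W)$ with $v^\perp\cap W$ a non-degenerate quadratic space of dimension $2n-2$; since $q|_W$ is non-degenerate one checks that $s$ vanishes transversally there (equivalently, all these zero loci are $\mathrm{O}(W)$-translates of one another and a Bertini-general one is smooth), so $Y_s\simeq\OG(n-1,2n-2)$ scheme-theoretically, smooth of the expected dimension $\binom{n-1}{2}=\dim X-\rk(\cE^\vee)$, and $i_s$ is the inclusion induced by $\varphi_w^{-1}$.

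For (ii), I would restrict \eqref{eq:non-degenerate-subspace-exact-sequence} to $Y_s$: under $\varphi_w$ the embedding $Y_s\hookrightarrow X$ becomes $\OG(n-1,v^\perp\cap W)\hookrightarrow\OG(n-1,W)$, along which $\cS$ restricts to the tautological subbundle $\cU_{n-1}$ of $Y_s$, so $i_s^*\cE^\vee=\cU_{n-1}^\vee$ and the restricted sequence is $0\to\cO_{Y_s}\to i_s^*\cU^\vee\to\cU_{n-1}^\vee\to 0$; it splits because $\Ext^1_{Y_s}(\cU_{n-1}^\vee,\cO_{Y_s})=H^1(Y_s,\cU_{n-1})=0$ by BBW on the spinor variety $\OG(n-1,2n-2)$ (equivalently \eqref{eq:non-degenerate-subspace-exact-sequence} splits already over $X$, as $H^1(\OG(n-1,W),\cS)=0$), giving $i_s^*\cU^\vee\simeq\cU_{n-1}^\vee\oplus\cO_{Y_s}$. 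For (iii), I would fix $x=[U]\in X$, put $U'=U\cap W$, and note by (i) that $x\in Y_{s_v}$ exactly when $v\in(U')^\perp\cap W$ and $q(v,v)\neq 0$; now $(U')^\perp\cap W$ has dimension $n$, contains $U'$, and --- as $q|_W$ is non-degenerate --- the radical of $q$ on it is $\big((U')^\perp\cap W\big)\cap U'=U'$, so $q$ has rank $1$ there and non-isotropic $v$'s exist, each putting $x$ into a copy $Y_{s_v}\simeq\OG(n-1,2n-2)$. Hence the $Y_s$ (for $s$ with $q(v,v)\neq 0$) cover $X$.

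Finally (iv) is the standard support argument: if $0\neq F\in\Db(X)$, take $j$ maximal with $\mathcal{H}^j(F)\neq 0$ and a point $x$ in its support; picking by (iii) a section $s$ with $x\in Y_s$, the top cohomology sheaf of $Li_s^*F$ is $i_s^*\mathcal{H}^j(F)$, which is nonzero at $x$ by Nakayama, so $i_s^*F\neq 0$; contrapositively, vanishing of all $i_s^*F$ forces $F=0$. I expect the main obstacle to be organisational rather than conceptual: keeping the three incarnations $\OG_+(n,V)$, $\OG(n-1,W)$, $\OG(n-1,v^\perp\cap W)$ straight under $\varphi_w$ together with the matching of their tautological bundles, and --- crucially for (iii) --- making sure the transversality/smoothness in (i) holds for every $s$ with $q(v,v)\neq 0$, not merely a Bertini-generic one, so that the good $Y_s$ genuinely sweep out all of $X$. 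Once that is in place, the rest reduces to linear algebra and a single BBW vanishing.
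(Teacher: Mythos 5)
Your proof follows essentially the same route as the paper's: identify $\cE$ with the tautological subbundle of $\OG(n-1,W)$ via $\varphi_w$, deduce (i) and (ii) from that identification together with a BBW vanishing on $\OG(n-1,2n-2)$, and prove (iii) by exhibiting a non-isotropic vector in $(U\cap W)^\perp\cap W$. The only (minor) differences are that you make explicit the rank-1 computation showing every $v\in(U')^\perp\cap W\setminus U'$ is non-isotropic (which the paper leaves implicit), and you prove (iv) directly by a support/Nakayama argument rather than citing \cite[Lemma 4.5]{Ku08a}; both are correct.
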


\begin{proof}
  For $s$ to be \textit{general enough}, it is enough to
    satisfy the analogue of \eqref{eq:non-degenerate-not-on-the-quadric}. i.e.
    the restriction of $q$ to $L:= \ker(s) \subset W$ should be non-degenerate.
      
  \begin{enumerate}[label=\roman*)]
\item Under \eqref{eq:identification-og-5-11} the vector bundle $\cE^\vee$ corresponds
    to the dual of the tautological subbundle on $\OG(n-1,W)$. Hence, we get the claim.

    \item Under \eqref{eq:identification-og-5-11} the sequence
    \eqref{eq:non-degenerate-subspace-exact-sequence} shows that there is a non-trivial
    extension between the dual of the tautological subbundle and the structure sheaf
    on $\OG(n-1,W)$. However, by BBW on $\OG(5,L) = \OG(n-1,2n-2)$ such extensions vanish and
    the sequence splits.

    \item Indeed, for any $n-1$-dimensional isotropic subspace $U_{n-1} \subset W$
    we can consider $U_{n-1}^\perp$, take any element $u \in  U_{n-1}^\perp \setminus U_{n-1}$
    and take $L = u^\perp$. Clearly we have $U_{n-1} \subset L$ and $q_{\vert L}$
    is non-degenerate by \eqref{eq:non-degenerate-not-on-the-quadric}.

    \item This is \cite[Lemma 4.5]{Ku08a}.
  \end{enumerate}
\end{proof}

\subsection{Full exceptional collection on the spinor 10-fold}

The orthogonal Grassmannian $Y = \OG(5,10)$ has two connected components that we
denote by
\begin{equation*}
  Y_- = \OG_-(5,10) \quad \text{and} \quad Y_+ = \OG_+(5,10).
\end{equation*}
These components are isomorphic to each other, we call them spinor 10-folds.

As usual, on $Y = \OG(5,10)$ we can consider the tautological subbundle $\cU_5$ of rank $5$.
We denote as
$
  \cU_{5,\pm} \coloneqq {\cU_5}\vert_{ Y_{\pm}}
$
its restrictions to $Y_{\pm}$ 

We prove the following result as a useful warm-up to the case of $X=\Spin_{12}/\rP_6=\OG_+(6,12)$.
\begin{theorem}
  \label{theorem:collection-og-5-10}
  We have
  \begin{equation*}
    \Db(Y_{\pm}) = \Big\langle \cO, \cU^\vee_{5,\pm}, \cO(1), \cU^\vee_{5,\pm}(1), \dots ,
    \cO(7), \cU^\vee_{5,\pm}(7) \Big\rangle
  \end{equation*}
\end{theorem}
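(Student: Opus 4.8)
The plan is to prove that the proposed collection on $Y_\pm = \OG_\pm(5,10)$ is a full exceptional collection by first establishing exceptionality via Borel--Bott--Weil, and then establishing fullness by the standard mutation/induction technique used for spinor varieties. Since $Y_\pm$ is the spinor $10$-fold, which has Fano index $8$ and $\rk K_0 = 16$, a rectangular Lefschetz collection with block $\left(\cO, \cU_{5,\pm}^\vee\right)$ repeated $8$ times has exactly the right length; the point is to verify the semiorthogonality relations and then prove generation.

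\textbf{Exceptionality.} First I would record that $\cU_{5,\pm}^\vee$ is (after the appropriate identification) an irreducible homogeneous bundle on $Y_\pm = \Spin_{10}/\rP_5$, and use the tensor decomposition of $\cU_{5,\pm}\otimes \cU_{5,\pm}^\vee$ into irreducible homogeneous bundles. Invoking BBW on $Y_\pm$ one checks that the twists $\cO(-t)$ and $(\cU_{5,\pm}\otimes\cU_{5,\pm}^\vee)(-t)$ have vanishing cohomology for $t = 1,\dots,7$, and that $\chi$ on the diagonal is $1$. This is routine and entirely parallel to the corresponding verification in Lemma~\ref{it's exceptional} for $\OG_+(6,12)$, so I would state it with a reference to those weight computations rather than redoing them.

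\textbf{Fullness.} This is the main obstacle, and I would follow the strategy announced in Section~\ref{warming up}. One uses the covering family of Lemma~\ref{lemma:auxiliary-1}: for a general section $s\in H^0(Y_\pm,\cE^\vee)$, the zero locus $Y_s \cong \OG(4,8) \cong \OG_+(4,8) \sqcup \OG_-(4,8)$ embeds in $Y_\pm$ with $i_s$, and these cover $Y_\pm$, so by part (iv) of that lemma it suffices to show that every object $F \in \cD^\perp$ restricts to zero on each $Y_s$. The key input is the exact Koszul-type complex from Section~\ref{section:dissecting} (which resolves $\cO_{Y_s}$, or more precisely the pushforward $i_{s*}\cO_{Y_s}(-k)$ for suitable $k$, in terms of wedge powers of $\cE$, hence in terms of twists of exterior powers of $\cU_{5,\pm}$): one rewrites this complex so that all its terms, after a suitable twist, lie in $\cD$. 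Using that $\wedge^j\cU_{5,\pm}^\vee$ and its twists can be expressed in $\cD$ via the block structure (here the spinor-variety identities $\wedge^2\cU_5^\vee \simeq \cU_5(\ldots)$ etc. are used — on the $10$-fold the relevant exterior powers are small), one deduces that $i_{s*}\cO_{Y_s}$ lies in $\cD$. Then for $F\in\cD^\perp$ we get $\Hom^\bullet(i_{s*}\cO_{Y_s}, F) = 0$, and by adjunction $\Hom^\bullet_{Y_s}(\cO_{Y_s}, i_s^! F) = 0$; twisting $F$ by all $\cO(t)$ and using that $\{\cO_{Y_s}(t)\}$ generate $\Db(Y_s)$ (which is known: $Y_s = \OG_\pm(4,8) \cong \bP^3$-type quadric-like variety with a known full exceptional collection, in fact a product of projective spaces after the triality identification $\OG_\pm(4,8)\cong \bP^3$), one forces $i_s^!F = 0$, hence $i_s^*F = 0$ for all $s$, hence $F = 0$.

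\textbf{Assembling.} I expect the bookkeeping in the fullness step — checking that each term of the resolving complex lands in $\cD$ after the correct twist, and that the twist ranges match the length $8$ of the collection — to be the delicate part, but it is precisely the warm-up the authors intend, so it should go through cleanly; the genuinely hard version of this argument is deferred to $\OG_+(6,12)$ in Sections~\ref{more} and~\ref{section:fullness}. Once $\cD^\perp = 0$ is established, combined with exceptionality the semiorthogonal decomposition $\Db(Y_\pm) = \langle \cD^\perp, \cD\rangle$ collapses to $\Db(Y_\pm) = \cD$, which is the claimed statement.
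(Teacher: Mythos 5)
Your overall framework—exceptionality by Borel--Bott--Weil and fullness by restricting to the covering family of zero loci $Y_s$, then resolving $i_{s*}\cO_{Y_s}$ by a Koszul complex—is the paper's strategy, but there is a genuine error in the final reduction that removes the technical core of the argument.

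You assert that each component $\OG_{\pm}(4,8)$ of $Y_s$ is isomorphic to $\bP^3$ and hence that line bundles alone generate its derived category. This is false: via triality $\OG_{\pm}(4,8) \cong \Spin_8/\rP_4 \cong Q^6$, a six-dimensional quadric, not $\bP^3$ (the isomorphism you have in mind, $\OG_{\pm}(3,6)\cong\bP^3$, is one step lower on the ladder). On $Q^6$ line bundles do not generate: a full exceptional collection requires a spinor bundle, e.g.\ $\langle \cO(2),\cU_{4,\pm}^\vee(2),\cO(3),\dots,\cO(7)\rangle$. Consequently your test set, consisting only of $\cO_{Y}(t)$, does not suffice to force $i_s^*F=0$. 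The paper instead takes $\Sigma_5 = \{\cO_Y(t)\mid t\in[2,7]\}\cup\{\cU_5^\vee(2)\}$, precisely so that after restriction one recovers a generating set on each quadric component, and then one must prove $E\otimes\wedge^j\cE_5^\vee\in\cD_5$ for \emph{all} $E\in\Sigma_5$. The case $E=\cU_5^\vee(2)$ (specifically $\wedge^2\cU_5\otimes\cU_5^\vee(2)\in\cD_5$) is not routine bookkeeping: it is exactly where the exact spin complex of Proposition~\ref{prop_complex_5} and the decompositions $\ss(S^\pm\otimes\cU_5^\vee)$ are indispensable. Your proposal acknowledges there is a ``delicate'' step but misplaces it and underestimates it, and the $\bP^3$ misidentification hides that the spinor-bundle twists must be fed into the Koszul argument at all.
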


\begin{proof}
Let us fix the $+$ sign and let us define $\cD_5:=\langle \cO, \cU^\vee_{5}, \cO(1), \cU^\vee_{5}(1), \dots ,
    \cO(7), \cU^\vee_{5}(7) \rangle$ on $Y:=Y_+$. Let us take an object $F \in \cD_5^\perp$, i.e. we have
  \begin{equation*}
    \Ext_X^\bullet(A,F) = 0 \quad \text{for any} \quad A \in \cD_5.
  \end{equation*}
  Let $s \in H^0(X,\cE^\vee)$ be a general section and $\is \colon Z_s \to Y$ the
  embedding of its zero locus, as in Lemma \ref{lemma:auxiliary-1}(1).

  Let us consider the set of vector bundles on Y defined by
  \begin{equation*}
    \Sigma_5 \coloneqq \{ \cO_Y(t) \, \vert \, t \in [2,7] \}\cup \{\cU^\vee_5(2)\}.
  \end{equation*}
  Let us denote by $\cE_5^\vee$ the vector bundle defined in \eqref{eq:non-degenerate-subspace-exact-sequence}. We claim that for any $E \in \Sigma_5$ and any $j$ the bundle
  $E \otimes \wedge^j \cE_5^\vee$ lies in $\cD_5$. Let us for the moment assume that the claim is true. Then we have
  \begin{equation*}
    \Ext_X^\bullet(E \otimes \wedge^j \cE_5^\vee, F) =
    H^\bullet(Y, \wedge^j \cE_5 \otimes E^\vee \otimes F) = 0 \quad \text{for all } j,
  \end{equation*}
  and making use of the Koszul complex
  \begin{equation*}
    0 \to \wedge^4 \cE_5 \to \dots \to \cE_5 \to \cO_Y \to \is_* \cO_{Z_s} \to 0,
  \end{equation*}
  we obtain
  \begin{equation*}
    H^\bullet(Y, \left( E^\vee \otimes F \right) \otimes \is_* \cO_Z) = 0.
  \end{equation*}
  Now, by projection formula we rewrite
  \begin{equation*}
    H^\bullet(Y, \left( E^\vee \otimes F \right) \otimes \is_* \cO_Z) = H^\bullet(Z_s, \is^* \left( E^\vee \otimes F \right))
    = \Ext^\bullet_Z(\is^* E , \is^* F) = 0.
  \end{equation*}

  Recall that $Z_s \simeq \OG(4,8)$ has two connected components $Z_{s+}$ and $Z_{s-}$ which are two six dimensional quadrics.
  We denote the compositions $Z_{s\pm} \subset Z_s \overset{\is}{\to} Y$ by $\is_{\pm}$.
  Using this notation we have
  \begin{equation*}
    \Ext^\bullet_{Z_s}(\is^* E , \is^* F) =
    \Ext^\bullet_{Z_{s+}}(\is_+^* E , \is_+^* F) \oplus \Ext^\bullet_{Z_{s-}}(\is_-^* E , \is_-^* F).
  \end{equation*}
  Hence, we have
  \begin{equation*}
    \Ext^\bullet_{Z_{s+}}(\is_+^* E , \is_+^* F) = 0
    \quad \text{and} \quad
    \Ext^\bullet_{Z_{s-}}(\is_-^* E , \is_-^* F) = 0.
  \end{equation*}
  Applying Lemma \ref{lemma:auxiliary-1}(2) and the fact that the six dimensional quadrics $Z_{s\pm}$ admit the following full exceptional collection (see \cite{Ka}) 
  $$\Db(Z_{s\pm})= \Big\langle \cO(2), \cU^\vee_{4,\pm}(2), \cO(3), \dots ,
    \cO(7) \Big\rangle,$$
  we obtain
  \begin{equation*}
    \is_+^* F = 0 \quad \text{and} \quad \is_-^* F = 0.
  \end{equation*}
  Hence, we conclude $\is^* F = 0$. Finally, since the above argument works for any general $s \in H^0(Y,\cE_5^\vee)$,
  by Lemma \ref{lemma:auxiliary-1}(3,4) we obtain $F = 0$. 
  
  Now, let us prove the claim. We need to prove that $\wedge^j \cE_5^\vee(t)\in \cD_5$ for $t\in[2,7]$ and $\cU_5^\vee\otimes \wedge^j \cE_5^\vee (2)\in \cD_5$ for all possible $j$'s. From the exact sequence
  $$ 0 \to \cO_Y {\to} \cU_5^\vee \to \cE_5^\vee \to 0$$
  we deduce that our claim is implied by the fact that $\wedge^j \cU_5^\vee(t)\in \cD_5$ for $t\in[2,7]$ and $\cU_5^\vee\otimes \wedge^j \cU_5^\vee (2)\in \cD_5$ for all possible $j$'s. 
  
  The bundles $\cO(t)$ and $\cU^\vee_5(t)$ for $t\in [0,7]$ generate $\cD_5$. From the exact sequence $0\to \cU_5 \to V_{10}\otimes\cO_Y \to \cU^\vee_5 \to 0$, where $V_{10}$ is a ten dimensional vector space, we deduce that $\cU_5(t)\in \cD_5$ for $t\in [0,7]$. Thus $\cO_Y(t),\cU_5(t),\wedge^4 \cU_5(t)=\cU^\vee(t-2),\wedge^5\cU_5(t)=\cO_Y(t-2)$ all belong to $\cD_5$ for $t\in[2,7]$. 
  
  Recall that $\ss(S^-\otimes \cO_Y)=\cU_5(-1)\oplus  \wedge^3 \cU_5^\vee(-1)\oplus \cO_Y(1)$, so we deduce that $\wedge^3 \cU_5^\vee(t-2)=\wedge^2 \cU(t)\in \cD_5$ for $t\in [2,7]$. Similarly the fact that $\ss(S^+\otimes \cO_Y)=\cO_Y(-1)\oplus  \wedge^2 \cU_5^\vee(-1)\oplus \cU_5(1)$ implies that $\wedge^2 \cU_5^\vee(t-2)=\wedge^3 \cU(t)\in \cD_5$ for $t\in [2,7]$.
  
  Now we need to deal with $\cU_5^\vee \otimes \wedge^j \cU_5(2)$. When $j=0$ and $j=5$, $\cU_5^\vee \otimes \wedge^j \cU_5(2)\in \cD_5$. For $j=1$ use the decomposition $\ss(\wedge^2 V_{10}\otimes \cO_Y)=\wedge^2 \cU_5 \oplus \cU_5\otimes \cU_5^\vee \oplus \wedge^2 \cU_5^\vee$ to deduce that $\cU_5\otimes \cU_5^\vee(t)\in \cD_5$ for $t\in[2,5]$. For $j=3$ use the decomposition $\ss(S^+\otimes \cU^\vee_5)=\cU_5^\vee(-1)\oplus \wedge^3 \cU_5 \otimes \cU_5^\vee(1)\oplus \cU_5\otimes \cU_5^\vee(1)$ to deduce that $\wedge^3\cU_5\otimes \cU_5^\vee(t)\in \cD_5$ for $t\in[2,5]$. The cases $j=2$ and $j=4$ can be dealt with in parallel. Indeed one can use the decomposition $\ss(S^-\otimes \cU^\vee_5)=\cU_5^\vee(1)\oplus \wedge^2 \cU_5 \otimes \cU_5^\vee(1)\oplus \wedge^4 \cU_5\otimes \cU_5^\vee(1)$ to deduce that, if $t\in [1,6]$ then: $\wedge^2\cU_5\otimes \cU_5^\vee(t)\in \cD_5$ if and only if $\wedge^4\cU_5\otimes \cU_5^\vee(t)\in \cD_5$. 
  
  Finally, we want to prove for instance that $\wedge^2 \cU_5 \otimes \cU_5^\vee(2)\in \cD_5$. For this the exact complex appearing in Proposition \ref{prop_complex_5} is crucial (Section \ref{sec_spinor_bundles} is independent of this proof, so we can use the results therein). Indeed from that complex one deduces that $R_5(t)\in \cD_5$ for $t\in[0,5]$, which in turn implies that $\cU_5^{\omega_1+\omega_2}(t)\in \cD_5$ for $t\in[0,3]$. Then, using the decomposition $\ss(\wedge^2 \cU^\vee_5 \otimes V_{10})=\cU_5^{\omega_1+\omega_2} \oplus \wedge^3 \cU_5^\vee \oplus \cU_5^\vee \oplus \cU_5^{\omega_2+\omega_4}(-2)$, we obtain that $\cU_5^{\omega_2+\omega_4}(t)\in \cD_5$ for $t\in[-2,1]$. Notice also that previously we showed that $\cU_5\otimes \cU_5^\vee(t)=\cO_Y(t)\oplus \cU_5^{\omega_1+\omega_4}(t-2) \in \cD_5$ for $t\in [2,5]$, so $\cU_5^{\omega_1+\omega_4}(t) \in \cD_5$ for $t\in [0,3]$. These two facts imply that $\cU_5^{\omega_1+\omega_3}(t)\in \cD_5$ for $t=0,1$ because of the decomposition $\ss(\cU_5^{\omega_1+\omega_4}(-2)\otimes V_{10})=\cU_5^{\omega_1+\omega_3}(-2)\oplus \cU_5^{\omega_2+\omega_4}(-2)\oplus \cU_5\oplus \cU_5^\vee$. Then from the decomposition $\ss(\wedge^2 \cU_5 \otimes \cU_5^\vee)=\cU_5^{\omega_1+\omega_3}(-2)\oplus \cU_5$ we deduce that $\wedge^2 \cU_5 \otimes \cU_5^\vee(t)\in \cD_5$ for $t=2,3$.
\end{proof}

\begin{remark}
Theorem \ref{theorem:collection-og-5-10} was already known from \cite[Section 6.2]{K06}. The proof given here is more direct and corresponds better to our approach. 
\end{remark}

\section{Dissecting Spin bundles}
\label{section:dissecting}

In this section, we look more closely to the vector bundles on $X=\OG(5,12)_+$ induced by the spinor representations. The main goal is to prove Proposition \ref{prop_complex_5} and 
\ref{prop_complex_6}, which in turn will be used in Section \ref{more} in view of showing fullness of our collection.
We ofter abbreviate $\cO_X$ to $\cO$.

\subsection{The Spin representations}

In the following we will recall a selection of generalities about the Clifford algebra and Spin representations that can be found, for instance, in \cite{EMClif}. Let us begin with an even dimensional vector space $V$ endowed with a non-degenerate symmetric form $q$. The Clifford algebra is defined as the quotient of the tensor algebra $V^\otimes$ by all relations of the form $v\otimes v - q(v,v)$ for $v\in V$. Notice that both $V$ and $\wedge^2 V \simeq \mathfrak{so}_V$ embed inside the Clifford algebra. 

\subsubsection{The Spin representation and exterior powers}

Let us fix a maximal isotropic subspace $U$ of $V$. Any other maximal isotropic subspace intersecting $U$ transversally can be identified through $q$ with $U^\vee$; we thus get a decomposition of $V=U\oplus U^\vee$. The Spin representations can be identified, as vector spaces, as follows:
\begin{align*}
    S^+&:=\wedge^+ U^\vee=\bigoplus_i \wedge^{2i} U^\vee, \\
    S^-&:=\wedge^- U^\vee=\bigoplus_i \wedge^{2i+1} U^\vee. 
\end{align*}
There is a natural action $$\eta_\pm:=V\otimes S^{\pm} \to S^{\mp}$$ defined as follows: $\eta_{\pm}(v\otimes \omega)=v\wedge \omega$ if $v\in U^\vee$ and $\eta_{\pm}(v\otimes \omega)=v\lrcorner \omega$ if $v\in U$, where $\lrcorner$ is the contraction. This induces an action of the Clifford algebra, and hence of $\mathfrak{so}_V$, on $S^{\pm}$, which endows this vector space with a structure of $\Spin$-representation; $S^\pm$ are the so-called Spin representations. It turns out that, if the dimension of $V$ is $2n$, then $(S^\pm)^\vee=S^{(-1)^n \pm}$ as representations. Moreover if $n$ is odd then $S^+=V^{\omega_5}$ and $S^-=V^{\omega_6}$ while if $n$ is even then $S^+=V^{\omega_6}$ and $S^-=V^{\omega_5}$. Notice moreover that the action $\eta_\pm$ naturally induces a $\Spin$-equivariant morphism $\eta_\pm^{\otimes i}:V^{\otimes i} \otimes S^\pm \to S^{(-1)^{i}\pm}$, and thus also a $\Spin$-equivariant morphism 
$$\wedge^i \eta_\pm:\wedge^i V \otimes S^\pm \to S^{(-1)^{i}\pm}.$$
In the following we want to use the morphism $\wedge^i \eta_\pm$ to construct some exact complexes on $\Spin_{2n}/\rP_n$ for $n=5,6$. Before doing so, we will recall basic linear algebra facts in order to explain how to rewrite $\wedge^i \eta_\pm$ as a morphism $\xi:S^\pm \otimes (S^{(-1)^i\pm})^\vee \to \wedge^i V^\vee \simeq \wedge^i V$.

\subsubsection{Linear algebra digression}

Let us begin with a linear morphism $u:A\otimes B \to C$ for three vector spaces $A,B,C$. This means that $u\in A\otimes B \otimes C^\vee=\Hom(A\otimes C^\vee,B^\vee)$, so it defines another morphism $t:A\otimes C^\vee \to B^\vee$. Clearly one can recover $u$ from $t$ as well. 

\begin{lemma}
\label{lem_lin_algebra}
$\Imm(u)^\perp$ is identified with the subspace $\{x\in C^\vee \mid t(a,x)=0 \mbox{ }\forall a\in A \}\subset C^\vee$.
\end{lemma}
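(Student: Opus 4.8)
The plan is to unwind the definition of the partial transpose $t$ and check that the claim then becomes a tautology. First I would recall precisely how $t$ is built from $u$: under the identification $A\otimes B\otimes C^\vee\simeq\Hom(A\otimes C^\vee,B^\vee)$, the map $t\colon A\otimes C^\vee\to B^\vee$ is the unique one satisfying
\[
\langle t(a\otimes x),\,b\rangle \;=\; \langle x,\,u(a\otimes b)\rangle
\qquad\text{for all } a\in A,\ b\in B,\ x\in C^\vee .
\]
Every consistent choice of that identification yields this pairing identity, and it is the only property of $t$ that will be used, so the argument is insensitive to sign or ordering conventions. Here $t(a,x)$ is shorthand for $t(a\otimes x)$.

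Next I would record the single elementary input: $\Imm(u)\subseteq C$ is the linear span of the vectors $u(a\otimes b)$ with $a\in A$, $b\in B$, because the decomposable tensors span $A\otimes B$ and the image of a linear map is the span of the images of any spanning set. With this in hand the proof is immediate: for $x\in C^\vee$,
\begin{align*}
x\in\Imm(u)^\perp
&\iff \langle x,\,u(a\otimes b)\rangle=0 \ \text{ for all } a\in A,\ b\in B\\
&\iff \langle t(a\otimes x),\,b\rangle=0 \ \text{ for all } a\in A,\ b\in B\\
&\iff t(a\otimes x)=0 \ \text{ for all } a\in A,
\end{align*}
where the first equivalence uses the spanning remark, the second is the defining identity of $t$, and the third holds because $b$ ranges over all of $B$. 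The right-hand side is exactly the subspace $\{x\in C^\vee\mid t(a,x)=0\ \forall a\in A\}$, which proves the lemma.

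I do not expect any genuine obstacle here: the content is entirely bookkeeping — pinning down which partial transpose $t$ denotes, via the pairing identity above, together with the observation that $\Imm(u)$ is spanned by the values of $u$ on pure tensors. The only point that deserves a moment's care is fixing the isomorphism $A\otimes B\otimes C^\vee\simeq\Hom(A\otimes C^\vee,B^\vee)$ coherently, but since only the displayed pairing relation enters the argument, no specific convention is needed.
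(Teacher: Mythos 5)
Your proof is correct and takes essentially the same approach as the paper: both rest on the defining identity $\langle t(a\otimes x),b\rangle=\langle x,u(a\otimes b)\rangle$, after which the equivalence is immediate. You merely spell out the spanning-by-pure-tensors step that the paper leaves as "straightforward."
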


\begin{proof}
Let us denote by $D$ the above subspace. By definition of $t$, for any $x\in C^\vee$, $a\in A$ and $b\in B$, $x(u(a\otimes b))=t(a\otimes x)(b)$. It is straightforward to deduce that $x\in \Imm(u)^\perp$ if and only if $x\in D$.
\end{proof}

\subsection{Spinor bundles}
\label{sec_spinor_bundles}

Let us consider the variety $\Spin_{2n}/\rP_n=\OG_+(n,2n)$ which is one of the two isomorphic connected components of the variety parametrizing maximal isotropic subspaces of $V$. Let us denote by $\epsilon:= (n \mod 2)$. The line bundle $\cO(1)=\cU_{\omega_n}$ embeds $\Spin_{2n}/\rP_n$ inside $\bP(V^{\omega_n})=\bP(S^{(-1)^\epsilon})$. Thus $\cO(1)$ is a $G$-equivariant quotient of $S^{(-1)^\epsilon}\otimes \cO$. In fact, one can construct a filtration of $G$-equivariant vector bundles $$0=:\cF_{0} \subset \cF_{1} \subset \cdots \subset \cF_{\lfloor n \rfloor +\epsilon}:=S^{(-1)^\epsilon}$$ such that $$\cF_{i+1}/\cF_{i}=(\wedge^{2i+\epsilon} \cU^\vee) (-1).$$ This is the relative version of the filtration of $S^{(-1)^\epsilon}=\wedge^+ U^\vee$ given by the subspaces $F_{i+1}:=\sum_{j\leq i}\wedge^{2j+\epsilon}U^\vee$. A similar filtration exists for $S^{-(-1)^\epsilon}\otimes \cO$. For instance, we get that $\cF_1=\cU^\vee(-1)$ is a subbundle of $S^-\otimes \cO$. This filtration was described in \cite[Proposition 6.3]{Ku08a}.

\subsubsection{An exact complex in low dimension} 

We will now construct an exact complex of vector spaces using the morphisms $\wedge^i \eta_\pm$ when $n=5$ and $n=6$. We believe that this type of complexes can be generalized for higher $n$ and will be crucial in proving fullness of exceptional collections on $\Spin_{2n}/\rP_n$ for higher $n$. From now on we fix $\eta:=\eta_+$

\subsubsection{The case $n=5$}

In this case we have the following decomposition of representations: $S^+\otimes S^-=\bC \oplus \wedge^2 V \oplus V^{\omega_4+\omega_5}$. This implies that there exists a unique $G$-equivariant morphism $S^+\otimes S^- \to \wedge^2 V$, which must then be equal to $\wedge^2 \eta$ (notice that $(S^-)^\vee=S^+$ since $n$ is odd). As a consequence of BBW $H^0(\cU(1) )=S^-$ and thus there exists a unique $G$-equivariant morphism $\cU^\vee(-1)\otimes S^+ \to \wedge^2 V$. This morphism must then be the composition $\wedge^2 \eta \circ (i\otimes \id)$ where $i$ is the inclusion $i:\cU^\vee(-1)\to S^-\otimes \cO$. All in all we get a $G$-equivariant morphism $\wedge^2 \eta \circ (i\otimes \id):\cU^\vee(-1)\otimes S^+ \to \wedge^2 V$. The aim of this section is to prove the following:

\begin{proposition}
\label{prop_complex_5}
There exists a $G$-equivariant extension $$0\to \cU^\vee(-2)\to R_5\to \cU_{\omega_1+\omega_2}(-2) \to 0$$ and a $G$-equivariant exact complex of vector bundles 
$$ 0\to R_5 \to \cU^\vee(-1)\otimes S^+ \to \wedge^2 V\otimes \cO \to \wedge^2 \cU^\vee \to 0 ,$$
where the central map is $\wedge^2 \eta \circ(i\otimes \id)$.
\end{proposition}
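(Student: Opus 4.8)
The plan is to deduce the whole four-term complex from a single fibrewise computation governed by the Clifford algebra, together with the decomposition of $S^+\otimes\cO$ recalled above (here $V$ has dimension $10$ and $X=\Spin_{10}/\rP_5$). First I set up the right end: since $\cU=\cU^\perp$ on $X$, the tautological quotient $V\otimes\cO\twoheadrightarrow\cU^\vee$ induces a surjection $\wedge^2 V\otimes\cO\twoheadrightarrow\wedge^2\cU^\vee$ with kernel $\cK$, which sits in the standard short exact sequence $0\to\wedge^2\cU\to\cK\to\cU\otimes\cU^\vee\to 0$; moreover $\cK^{\perp}=\wedge^2\cU$ for the pairing on $\wedge^2 V$ induced by $q$. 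Put $R_5:=\ker\psi$ with $\psi:=\wedge^2\eta\circ(i\otimes\id_{S^+})$. The proposition reduces to the single claim $\Imm(\psi)=\cK$: granting it, $\psi$ has constant rank, hence $R_5$ is a vector bundle, the displayed complex is exact, and the extension for $R_5$ can be read off.

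The heart is the identification $\Imm(\psi)=\cK$. Since $\wedge^2\eta$ is, in its other guise, the Clifford (equivalently $\mathfrak{so}_V\cong\wedge^2 V$) action on $S^-$, the transpose $t$ of $\psi$ in the sense of Lemma \ref{lem_lin_algebra} is the bundle morphism $\bar v\otimes x\mapsto x\cdot i(\bar v)$; applied at each fibre, Lemma \ref{lem_lin_algebra} gives
\[
  \Imm(\psi)^{\perp}=\bigl\{\,x\in\wedge^2 V\otimes\cO \;:\; x\cdot\cF_1=0\ \text{in}\ S^-\otimes\cO\,\bigr\},
\]
where $\cF_1=\cU^\vee(-1)=i(\cU^\vee(-1))\subset S^-\otimes\cO$ is the lowest step of the spinor filtration. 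This is a homogeneous subbundle, so it suffices to compute it on one fibre, at $[U]\in X$ with $U\subset V$ maximal isotropic. Fixing a complementary maximal isotropic $U'\cong U^\vee$ and using the model $S^-=\bigoplus_k\wedge^{2k+1}U^\vee$ in which $\cF_1$ becomes $\wedge^1U^\vee$ and Clifford multiplication by $U^\vee$, resp.\ $U$, is exterior multiplication, resp.\ contraction, write $x\in\wedge^2 V=\wedge^2U\oplus(U\wedge U^\vee)\oplus\wedge^2U^\vee$. On $\wedge^1U^\vee$ these three pieces act with exterior degree $-2$, $0$, $+2$, so $x\cdot\wedge^1U^\vee=0$ amounts to the vanishing of the $\wedge^2U^\vee$-part and of the $(U\wedge U^\vee)$-part on $\wedge^1U^\vee$; a direct check --- the delicate point being the normalisation relating contraction, Clifford multiplication and $q$ --- shows that each forces the corresponding component of $x$ to vanish, so $x\in\wedge^2U$. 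Hence $\Imm(\psi)^{\perp}=\wedge^2\cU=\cK^{\perp}$, so $\Imm(\psi)=\cK$; in particular $\psi$ followed by $\wedge^2V\otimes\cO\to\wedge^2\cU^\vee$ vanishes, the four-term complex is exact, and $R_5$ is a vector bundle of rank $5\cdot16-\rk\cK=80-35=45$.

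It remains to describe $R_5$. As $L(\rP_5)$-modules, the composition factors of $\cU^\vee(-1)\otimes S^+$ are $\cU^\vee(-2)$, $\cU_{\omega_1+\omega_2}(-2)$, $\cU_{\omega_3}(-2)$ and the two composition factors of $\cU^\vee\otimes\cU$ (namely $\cO$ and the irreducible traceless part), each with multiplicity one: one uses the filtration of $S^+\otimes\cO$ with graded pieces $\cO(-1)$, $\wedge^2\cU^\vee(-1)$, $\wedge^4\cU^\vee(-1)$, together with $\cU_{\omega_1}\otimes\cU_{\omega_2}\cong\cU_{\omega_1+\omega_2}\oplus\cU_{\omega_3}$ and $\wedge^4\cU^\vee\cong\cU(2)$. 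Likewise the composition factors of $\cK$ are $\wedge^2\cU\cong\cU_{\omega_3}(-2)$ and the two composition factors of $\cU\otimes\cU^\vee$, each with multiplicity one. Since $0\to R_5\to\cU^\vee(-1)\otimes S^+\xrightarrow{\psi}\cK\to 0$ is exact, $R_5$ has composition factors exactly $\cU^\vee(-2)$ and $\cU_{\omega_1+\omega_2}(-2)$. Finally, the same degree count as above shows $\psi$ vanishes on the subbundle $\cU^\vee(-1)\otimes\cO(-1)=\cU^\vee(-2)$ of $\cU^\vee(-1)\otimes S^+$, so $\cU^\vee(-2)\subset R_5$; being irreducible it is a subbundle, whence $R_5/\cU^\vee(-2)\cong\cU_{\omega_1+\omega_2}(-2)$, the asserted extension.

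The step I expect to be the real obstacle is the fibrewise Clifford computation in the second paragraph: both casting $\psi$ in a form to which Lemma \ref{lem_lin_algebra} applies, and checking $\{x\in\mathfrak{so}_V:x\cdot\wedge^1U^\vee=0\}=\wedge^2U$, where the $(U\wedge U^\vee)$-case is the one requiring care with the Clifford normalisation. Everything downstream --- the sequence $0\to\wedge^2\cU\to\cK\to\cU\otimes\cU^\vee\to 0$, the Littlewood--Richardson decompositions, and the composition-factor bookkeeping for $R_5$ --- is routine.
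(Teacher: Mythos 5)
Your proposal follows essentially the same route as the paper's proof: reduce to a fibrewise statement by $G$-equivariance, apply Lemma~\ref{lem_lin_algebra} to recast the image of $\psi=\wedge^2\eta\circ(i\otimes\id)$ as a perpendicular, decompose $\wedge^2 V$ into $L(\rP_5)$-isotypic (equivalently, degree) pieces and check each one, conclude $\Imm(\psi)^\perp=\wedge^2 U$ so $\Imm(\psi)=\cK$, and then read off $R_5$ by composition-factor bookkeeping — this is exactly the paper's strategy, except that where you defer the ``direct check'' on the degree-$0$ and degree-$+2$ pieces, the paper carries out the explicit Clifford computation for each of the four factors $\wedge^2 U$, $\bC$, $\mathfrak{sl}(U)$, $\wedge^2 U^\vee$. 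One small point in your favour: your observation that $\psi$ kills the subbundle $\cU^\vee(-1)\otimes\cO(-1)\cong\cU^\vee(-2)$, hence that $\cU^\vee(-2)$ is the sub and $\cU_{\omega_1+\omega_2}(-2)$ the quotient in the extension defining $R_5$, pins down the direction of the extension, which the paper's proof establishes only at the level of the semisimple reduction.
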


\begin{proof}
The morphism $\wedge^2 V\otimes \cO \to \wedge^2 \cU^\vee $ above is the natural projection induced by the exact sequence 
$$0\to \cU \to V\otimes \cO \to \cU^\vee \to 0.$$ 
Since this map, as well as $\wedge^2 \eta \circ (i\otimes \id)$, is a $G$-equivariant morphism of $G$-homogeneous vector bundles, it is sufficient to restrict to any fiber of $\Spin_{10}/\rP_5$ to prove exacteness. More precisely we will show that, if $[U]\in \Spin_{10}/\rP_5$, the induced complex of vector spaces
\begin{equation} 
\label{eq_complex_5_proof}
(\cU^\vee(-1)\otimes S^+)|_{[U]} \to (\wedge^2 V\otimes \cO)|_{[U]} \to (\wedge^2 \cU^\vee)|_{[U]} \to 0
\end{equation}
is exact. From this it will follow that the complex $$ \cU^\vee(-1)\otimes S^+ \to \wedge^2 V\otimes \cO \to \wedge^2 \cU^\vee \to 0 $$ is exact. The result will then follow by noticing that, since $\ss(\cU^\vee(-1)\otimes S^+)=\cU^\vee(-2)\oplus \cU_{\omega_1+\omega_2}(-2) \oplus \wedge^3 \cU^\vee(-2) \oplus \mathfrak{sl}(\cU)\oplus \cO$ and $\ss(\wedge^2 V)=\wedge^2 \cU \oplus \cO \oplus \mathfrak{sl}(\cU)\oplus \wedge^2 \cU^\vee$, the semisimple reduction of the kernel of $\wedge^2 \eta \circ (i\otimes \id)$ is necessarily equal to $\cU^\vee(-2) \oplus \cU_{\omega_1+\omega_2}(-2)$.
\medskip

Let $[U]\in \Spin_{10}/\rP_5$ be any point. Then $(\cU^\vee(-1))|_{[U]}\simeq U^\vee$ is a subspace of $(S^-\otimes \cO)|_{[U]}=S^-=\bigoplus_i \wedge^{2i+1}U^\vee$ - here the last equality only holds as an equality of $L(\rP_5)$-representations. 
Following the linear algebra digression, the morphism $\xi \circ (i\otimes \id)$ corresponds to the morphism $\wedge^i \eta \circ (i\otimes \id)$. Moreover, letting $$t:=(\wedge^2 \eta \circ (i\otimes \id))_{[U]}: U^\vee \otimes \wedge^2 V^\vee \to S^-=\bigoplus_i \wedge^{2i+1}U^\vee $$ and $$u:=(\xi\circ (i\otimes \id))_{[U]}:U^\vee \otimes S^+ \to \wedge^2 V$$ and applying Lemma \ref{lem_lin_algebra}, we deduce the following:
$$\Imm(u)^\perp=\{ v\in \wedge^2 V^\vee \mid \forall f\in U^\vee \subset S^-=\bigoplus_i \wedge^{2i+1}U^\vee,\mbox{ } t(f\otimes v)=0 \}\subset \wedge^2 V^\vee.$$
We bothered going through all of this because we have a very explicit description of the map $t$, which is the one induced by $\eta$; let us see how to use it. First notice that $t$ is a $\rP_5$-equivariant morphism, so in particular let us treat it as a $L(\rP_5)$-equivariant morphism. Thus we can decompose $\wedge^2 V=\wedge^2 U \oplus (U\otimes U^\vee) \oplus \wedge^2 U^\vee=\wedge^2 U \oplus \bC \oplus \mathfrak{sl}(U) \oplus \wedge^2 U^\vee$. By $L(\rP_5)$-equivariance, each of these factors is either completely contained in $\Imm(u)^\perp$ or does not intersect non-trivially $\Imm(u)^\perp$. In order to distinguish the two cases it is thus sufficient to decide whether a non-zero vector in a given factor belongs to $\Imm(u)^\perp$ or not. We thus have four cases to deal with. We will denote by $u_1,\dots,u_5$ a basis of $U$ and by $w_1,\dots,w_5$ the dual basis. We will denote by $u_{ij}=u_i\wedge u_j$ and by $w_{ij}=w_i\wedge w_j$; $\delta_{i,j}$ will denote Kronecker's delta.
\begin{itemize}
    \item[$\wedge^2 U$:] Let $0\neq u_{ij}\in \wedge^2 U$ and $w_k\in U^\vee$. Then $t(w_k\otimes u_{ij})=u_i\lrcorner(u_j\lrcorner w_k)-u_j\lrcorner(u_i\lrcorner w_k)=0,$ for any $k=1,\dots,5$, so $\wedge^2 U\subset \Imm(u)^\perp$.
    \item[$\bC$:] Let $0\neq \sum_i u_{i}\wedge w_i\in \bC\subset \wedge^2 V$ and $w_k\in U^\vee$. Then $t(w_k\otimes (\sum_iu_i\wedge w_i))=\sum_i(u_i\lrcorner (w_{ik}) - \delta_{i,k}w_k)=\sum_i((1-\delta_{i,k})w_k-\delta_{i,k}w_k)=\sum_i(1-2\delta_{i,k})w_k=2w_k\neq 0$, so $\bC\cap \Imm(u)^\perp=0$.
    \item[$\mathfrak{sl}(U)$:] Let $0\neq u_{i}\wedge w_j\in \mathfrak{sl}(U)$ for $i\neq j$, and $w_k\in U^\vee$. Then $t(w_k\otimes u_i\wedge w_j)=u_i\lrcorner (w_{jk})- (u_i\lrcorner w_k)w_j=-2\delta_{i,k}w_k\neq 0$, so $\mathfrak{sl}(U) \cap \Imm(u)^\perp=0$.
    \item[$\wedge^2 U^\vee$:] Let $0\neq w_{ij}\in \wedge^2 U^\vee$ and $w_k\in U^\vee$. Then $t(w_k\otimes w_{ij})=w_{ijk}\neq 0$, so $\wedge^2 U^\vee \cap \Imm(u)^\perp=0$.
\end{itemize}
The previous computations imply that $\Imm(u)^\perp=\wedge^2 U\subset \wedge^2 V \simeq \wedge^2 V^\vee$. This is equivalent to the fact that $\Imm(u)$ is the kernel of $\wedge^2 V \to \wedge^2 U^\vee$. Moreover the latter morphism is clearly surjective, so we deduce that the complex in \eqref{eq_complex_5_proof} is exact. The statement of the proposition follows.
\end{proof}

\subsubsection{The case $n=6$}

In this case we have the following decomposition of representations: $S^+\otimes S^-=V \oplus \wedge^3 V \oplus V^{\omega_5+\omega_6}$. This implies that there exists a unique $\rG$-equivariant morphism $S^+\otimes S^- \to \wedge^3 V$, which must then be equal to $\wedge^3 \eta$ (notice that $(S^+)^\vee=S^+$ since $n$ is even). As a consequence of the BBW Theorem $H^0(\cU(1) )=S^-$ and thus there exists a unique $G$-equivariant morphism $\cU^\vee(-1)\otimes S^+ \to \wedge^3 V$. This morphism must then be the composition $\wedge^3 \eta \circ (i\otimes \id)$ where $i$ is the inclusion $i:\cU^\vee(-1)\to S^-\otimes \cO$. All in all we get a $\rG$-equivariant morphism $\wedge^3 \eta \circ (i\otimes \id):\cU^\vee(-1)\otimes S^+ \to \wedge^3 V$. The aim of this section is to prove an analogue of Proposition \ref{prop_complex_5}. In order to do so, let us begin by defining the vector bundle $T$ as the cokernel of the unique $\rG$-equivariant inclusion $\cU^\vee \to V\otimes \wedge^2 \cU^\vee$; we thus have an exact sequence
$$ 0 \to \cU^\vee \to V\otimes \wedge^2 \cU^\vee \to T \to 0.$$
\begin{proposition}
\label{prop_complex_6}
There exists a $\rG$-equivariant extension $R_6$ whose semisimple reduction is $$\ss(R_6)=\cU^\vee(-2)\oplus \cU_{\omega_1+\omega_2}(-2)$$ and a $\rG$-equivariant exact complex of vector bundles 
$$ 0\to R_6 \to \cU^\vee(-1)\otimes S^+ \to \wedge^3 V\otimes \cO \to T \to \cU_{\omega_1+\omega_2} \to 0 ,$$
where the second map is $\wedge^3 \eta \circ(i\otimes \id)$.
\end{proposition}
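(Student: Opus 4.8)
The plan is to prove this by the same method used for Proposition \ref{prop_complex_5}. The first task is to pin down the three non-tautological arrows. The map $\wedge^3 V\otimes\cO\to T$ is, up to scalar, the unique $\rG$-equivariant one: by Borel--Bott--Weil $\cU^\vee$ has vanishing $H^1$ and receives no equivariant map from $\wedge^3 V\otimes\cO$, so $\Hom_\rG(\wedge^3 V\otimes\cO,T)\simeq\Hom_\rG(\wedge^3 V\otimes\cO,V\otimes\wedge^2\cU^\vee)$, and the latter is one-dimensional because $\wedge^3 V$ occurs with multiplicity one in $V\otimes\wedge^2 V$ as a $\Spin_{12}$-module; concretely the arrow is the coproduct $\wedge^3 V\to V\otimes\wedge^2 V$ composed with $\id_V$ tensored the surjection $\wedge^2 V\otimes\cO\to\wedge^2\cU^\vee$, followed by the quotient onto $T$. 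The arrow $T\to\cU_{\omega_1+\omega_2}$ is induced by $V\otimes\wedge^2\cU^\vee\to\cU^\vee\otimes\wedge^2\cU^\vee\to\Sigma^{2,1}\cU^\vee=\cU_{\omega_1+\omega_2}$, which kills the copy of $\cU^\vee$ defining $T$ — that copy lies in $\cU\otimes\wedge^2\cU^\vee=\ker(V\otimes\wedge^2\cU^\vee\to\cU^\vee\otimes\wedge^2\cU^\vee)$ — hence descends to $T$ and is plainly surjective; and $\wedge^3\eta\circ(i\otimes\id)$ is the map identified before the statement. Since all maps are morphisms of $\rG$-homogeneous bundles they have constant rank, so all kernels and images are subbundles and exactness can be checked on the fibre over one point $[U]\in X$, where it becomes an assertion about $L(\rP_6)=\mathrm{GL}(U)$-modules; by Schur's lemma each of the (multiplicity-one) irreducible summands of each term either lies in the relevant kernel/image or meets it in zero, so it suffices to test one vector per summand, exactly as in the $n=5$ case.

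Over $[U]$, with $V=U\oplus U^\vee$, one has $\wedge^3 V=\wedge^3 U\oplus(\wedge^2 U\otimes U^\vee)\oplus(U\otimes\wedge^2 U^\vee)\oplus\wedge^3 U^\vee$, which refines into six $\mathrm{GL}(U)$-irreducibles after splitting off the contraction summands $U\subset\wedge^2 U\otimes U^\vee$ and $U^\vee\subset U\otimes\wedge^2 U^\vee$. I would first settle the right-hand end. A short check on decomposables shows that the fibrewise coproduct-evaluation map $\wedge^3 V\to V\otimes\wedge^2 U^\vee$ has image equal to $U\otimes\wedge^2 U^\vee$ together with the image of the coproduct $\wedge^3 U^\vee\to U^\vee\otimes\wedge^2 U^\vee$; hence its cokernel in $V\otimes\wedge^2 U^\vee$ is $(U^\vee\otimes\wedge^2 U^\vee)/\wedge^3 U^\vee=\Sigma^{2,1}U^\vee=(\cU_{\omega_1+\omega_2})_{[U]}$, and since this image already contains the contraction copy of $\cU^\vee$ defining $T$, the same computes $\Coker(\wedge^3 V\otimes\cO\to T)$ and, together with the vanishing of $\wedge^3 V\to T\to\cU_{\omega_1+\omega_2}$ (which factors through the coproduct $\wedge^3 V\to\wedge^3\cU^\vee\to\cU^\vee\otimes\wedge^2\cU^\vee$, landing in the $\wedge^3\cU^\vee$-summand), gives exactness at $T$. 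The same description shows that $\ker(\wedge^3 V\otimes\cO\to T)$ is fibrewise $\wedge^3 U\oplus(\wedge^2 U\otimes U^\vee)$ together with the rank-$6$ subspace of $U\otimes\wedge^2 U^\vee$ mapping onto that copy of $\cU^\vee$, a subbundle of rank $116$.

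The core of the argument is to show that the image of $u:=(\wedge^3\eta\circ(i\otimes\id))_{[U]}\colon U^\vee\otimes S^+\to\wedge^3 V$ equals this kernel. Following the $n=5$ pattern I would transpose $u$ via Lemma \ref{lem_lin_algebra} to the map $t\colon U^\vee\otimes\wedge^3 V^\vee\to(S^+)^\vee$, which under $\wedge^3 V^\vee\simeq\wedge^3 V$ and $(S^+)^\vee\simeq S^+$ is $t(f\otimes v)=(\wedge^3\eta)(v\otimes f)\in S^+$ for $f\in\wedge^1 U^\vee\subset S^-$, so that $v$ acts by the antisymmetrised triple Clifford product, wedging on $U^\vee$-factors and contracting on $U$-factors. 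Then $\Imm(u)^\perp=\{v\in\wedge^3 V\mid(\wedge^3\eta)(v\otimes f)=0\ \forall f\in U^\vee\}$ is computed summand by summand, evaluating $t$ on one nonzero vector of each of the six pieces of $\wedge^3 V$ against a basis $u_1,\dots,u_6$ of $U$ and its dual basis $w_1,\dots,w_6$, using the explicit wedge/contraction rules exactly as in the four cases treated for $n=5$. The expected outcome is $\Imm(u)^\perp=\wedge^3 U\oplus V_{84}$, where $V_{84}\subset\wedge^2 U\otimes U^\vee$ is the $84$-dimensional $\mathrm{GL}(U)$-irreducible; taking annihilators inside $\wedge^3 V$ for the $q$-pairing (which pairs the summand $\wedge^p U\otimes\wedge^q U^\vee$ with $\wedge^q U\otimes\wedge^p U^\vee$) then gives $\Imm(u)=\wedge^3 U\oplus(\wedge^2 U\otimes U^\vee)\oplus U^\vee$, coinciding fibrewise with $\ker(\wedge^3 V\otimes\cO\to T)$ and proving exactness at $\wedge^3 V\otimes\cO$. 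Finally, setting $R_6:=\ker(\wedge^3\eta\circ(i\otimes\id))$ makes the complex exact at $R_6$ and at $\cU^\vee(-1)\otimes S^+$ by construction, and $\ss(R_6)$ follows by character bookkeeping: the filtration of $S^+\otimes\cO$ with graded pieces $\wedge^{2i}\cU^\vee(-1)$ gives $\ss(\cU^\vee(-1)\otimes S^+)=\bigoplus_{i=0}^{3}(\cU^\vee\otimes\wedge^{2i}\cU^\vee)(-2)$, Pieri's rule splits each factor, and subtracting $\ss(\Imm u)=\wedge^3\cU\oplus(\wedge^2\cU\otimes\cU^\vee)\oplus\cU^\vee$ and using $\det\cU^\vee\simeq\cO(2)$ (so that $\wedge^3\cU^\vee(-2)\simeq\wedge^3\cU$ and $\wedge^5\cU^\vee(-2)\simeq\cU$) to cancel the surviving pairs leaves precisely $\cU^\vee(-2)\oplus\cU_{\omega_1+\omega_2}(-2)$.

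I expect the main obstacle to be the fibrewise computation of $\Imm(u)^\perp$: there are strictly more $\mathrm{GL}(U)$-summands than for $n=5$, the triple Clifford product on $\wedge^3 V$ spreads into several signed terms, and one must choose test vectors inside the two $84$-dimensional irreducibles — the contraction-complements of $U$ in $\wedge^2 U\otimes U^\vee$ and of $U^\vee$ in $U\otimes\wedge^2 U^\vee$ — that make the vanishing and non-vanishing transparent; one should also confirm that all relevant constituents occur with multiplicity one so that the Schur-lemma reduction is clean, which a quick character count gives.
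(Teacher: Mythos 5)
Your proposal takes essentially the same route as the paper's proof: identify the maps by equivariance, reduce to a fibrewise computation via Lemma \ref{lem_lin_algebra}, split $\wedge^3 V$ into its six $L(\rP_6)$-isotypes and test one vector per summand, then extract $\ss(R_6)$ by character bookkeeping. Your predicted outcome $\Imm(u)^\perp=\wedge^3 U\oplus V_{84}$ with $V_{84}$ the $84$-dimensional irreducible inside $\wedge^2 U\otimes U^\vee$ agrees exactly with the paper's six-case check; the only step you defer (and flag yourself) is the explicit evaluation of the triple Clifford action on those test vectors, which is precisely the mechanical part the paper writes out.
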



\begin{proof}
The morphism $\wedge^3 V\otimes \cO \to T$ is the unique $\rG$-equivariant morphism and it is the one induced on the quotient from the natural one $\wedge^3 V\otimes \cO \to V\otimes \wedge^2 \cU^\vee $. Since the cokernel of the latter is $\cU_{\omega_1+\omega_2}$, this is also the cokernel of the former. Since the morphism $\wedge^2 \eta \circ (i\otimes \id)$ is a $\rG$-equivariant morphism of $\rG$-homogeneous vector bundles, it is sufficient to restrict to any fiber of $X$ to prove exactness, as we did in the proof of Proposition \ref{prop_complex_5}. More precisely we will show that, if $[U]\in X$, the induced complex of vector spaces
\begin{equation*} 
(\cU^\vee(-1)\otimes S^+)|_{[U]} \to (\wedge^3 V\otimes \cO)|_{[U]} \to T|_{[U]}
\end{equation*}
is exact. From this it will follow that the complex $$ \cU^\vee(-1)\otimes S^+ \to \wedge^3 V\otimes \cO \to T\to \cU_{\omega_1+\omega_2} \to 0 $$ is exact. The result will then follow by noticing that, since $\ss(\cU^\vee(-1)\otimes S^+)=\cU^\vee(-2)\oplus \cU_{\omega_1+\omega_2}(-2) \oplus \wedge^3 \cU^\vee(-2) \oplus \cU_{\omega_1+\omega_4}(-2)\oplus \cU \oplus \cU^\vee$ and $\ss(\wedge^3 V)=\wedge^3 \cU \oplus \cU_{\omega_1+\omega_4}(-2)\oplus \cU \oplus \cU_{\omega_2+\omega_5}(-2)\oplus \cU^\vee \oplus \wedge^3 \cU^\vee$, the semisimple reduction of the kernel of $\wedge^3 \eta \circ (i\otimes \id)$ is necessarily equal to $\cU^\vee(-2) \oplus \cU_{\omega_1+\omega_2}(-2)$.
\medskip

Let $[U]\in X$ be any point. Then $(\cU^\vee(-1))|_{[U]}\simeq U^\vee$ is a subspace of $(S^-\otimes \cO)|_{[U]}=S^-=\bigoplus_i \wedge^{2i+1}U^\vee$ as $L(\rP_6)$-representations. Letting $$t:=(\wedge^3 \eta \circ (i\otimes \id))_{[U]}: U^\vee \otimes \wedge^3 V^\vee \to S^+=\bigoplus_i \wedge^{2i}U^\vee $$ and $$u:=(\xi\circ (i\otimes \id))_{[U]}:U^\vee \otimes S^+ \to \wedge^3 V$$ and applying Lemma \ref{lem_lin_algebra}, we deduce the following:
$$\Imm(u)^\perp=\{ v\in \wedge^3 V^\vee \mid \forall f\in U^\vee \subset S^-=\bigoplus_i \wedge^{2i+1}U^\vee,\mbox{ } t(f\otimes v)=0 \}\subset \wedge^3 V^\vee.$$
Since we can treat everything as $L(\rP_6)$-equivariant/homogeneous, we can decompose $\wedge^3 V=\wedge^3 U \oplus U^{\omega_1+\omega_4}\oplus U \oplus U^{\omega_2+\omega_5}\oplus U^\vee \oplus \wedge^3 U^\vee$ (here, by abuse of notation, we denoted by $U^\omega$ the $\SL(U)$-representation with highest weight $\omega$). By $L(\rP_6)$-equivariance, each of these factors is either completely contained in $\Imm(u)^\perp$ or it does not intersect non-trivially $\Imm(u)^\perp$. In order to distinguish the two cases it is thus sufficient to decide whether a non-zero vector in a given factor belongs to $\Imm(u)^\perp$ or not. We thus have six cases to deal with. We will denote by $u_1,\dots,u_6$ a basis of $U$ and by $w_1,\dots,w_6$ the dual basis. We will denote by $u_{ijk}=u_i\wedge u_j\wedge u_k$, $u_{ij}=u_i\wedge u_j$, $w_{ij}=w_i\wedge w_j$ and $w_{ijk}=w_i\wedge w_j\wedge w_k$; $\delta_{i,j}$ will denote Kronecker's delta.
\begin{itemize}
    \item[$\wedge^3 U$:] Let $0\neq u_{ijk}\in \wedge^3 U$ and $w_h\in U^\vee$. Then $t(w_h\otimes u_{ijk})=u_{ij}\lrcorner(u_k\lrcorner w_k)-u_{ik}\lrcorner(u_j\lrcorner w_h)+u_{ij}\lrcorner(u_k\lrcorner w_h)=0,$ for any $k=1,\dots,6$, so $\wedge^3 U\subset \Imm(u)^\perp$.
    \item[$U^{\omega_1+\omega_4}$:] Let $0\neq u_{ij}\wedge w_k\in U^{\omega_1+\omega_4}$ for $i \neq k$ and $j\neq k$, and $w_h\in U^\vee$. Then $t(w_h\otimes u_{ij}\wedge w_k)=u_{ij}\lrcorner (w_{kh})- (u_i\lrcorner w_k) (u_j\lrcorner w_h)+(u_j\lrcorner w_k)(u_i\lrcorner w_h)= 0$, so $U^{\omega_1+\omega_4} \subset \Imm(u)^\perp$.
    \item[$U$:] Let $0\neq \sum_i u_{ij}\wedge w_j\in U\subset \wedge^3 V$ and $w_h\in U^\vee$. Then $t(w_h\otimes (\sum_iu_{ij}\wedge w_j))=\sum_i(u_{ij}\lrcorner w_{jh}-(u_i\lrcorner w_j)(u_j\lrcorner w_h) +(u_j\lrcorner w_j)(u_i\lrcorner w_h))=\sum_i(-2\delta_{i,h} +\delta_{i,h})\neq 0$, so $U\cap \Imm(u)^\perp=0$.
    \item[$U^\vee$:] Let $0\neq \sum_i u_{i}\wedge w_{ij}\in U^\vee\subset \wedge^3 V$ and $w_h\in U^\vee$. Then $t(w_h\otimes (\sum_iu_{i}\wedge w_{ij}))=\sum_i(u_{i}\lrcorner w_{ijh}-w_i\wedge(u_i\lrcorner w_{jh}) +w_{j}\wedge (u_i\lrcorner w_{ih}))=\sum_i(2w_{jh}+\delta_{i,h}w_{ij})=\sum_i w_{jh}\neq 0$, so $U^\vee\cap \Imm(u)^\perp=0$.
    \item[$U^{\omega_2+\omega_5}$:] Let $0\neq u_{i}\wedge w_{jk}\in U^{\omega_2+\omega_5}$ for $i \neq j$ and $i\neq k$, and $w_h\in U^\vee$. Then $t(w_h\otimes u_{i}\wedge w_{jk})=u_{i}\lrcorner (w_{jkh})- w_j\wedge (u_i\lrcorner w_{kh})+w_{jk}(u_i\lrcorner w_h)=3\delta_{i,h}w_{jk}\neq 0$, so $U^{\omega_2+\omega_5} \cap \Imm(u)^\perp=0$.
    \item[$\wedge^3 U^\vee$:] Let $0\neq w_{ijk}\in \wedge^3 U^\vee$ and $w_h\in U^\vee$. Then $t(w_h\otimes w_{ijk})=w_{ijkh}\neq 0$, so $\wedge^3 U^\vee \cap \Imm(u)^\perp=0$.
\end{itemize}
The previous computations imply that $\Imm(u)^\perp=\wedge^3 U\oplus U^{\omega_4+\omega_1}\subset \wedge^3 V \simeq \wedge^3 V^\vee$. This is equivalent to the fact that $\Imm(u)$ is the kernel of $\wedge^3 V \to (T)|_{[U]}$. The statement of the proposition follows.
\end{proof}

\subsubsection{Complete orthogonality}

Here we show the following result. Set:
\[
Q'=\rL_{\langle \cB \rangle}(Q).
\]
\begin{proposition}
The exceptional bundles $Q$ and $Q'(1)$ are completely orthogonal.
\end{proposition}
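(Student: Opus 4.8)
The plan is to show that $\Ext_X^\bullet(Q'(1),Q)=0$ and $\Ext_X^\bullet(Q,Q'(1))=0$. Since $Q'=\rL_{\langle\cB\rangle}(Q)$ sits in a distinguished triangle obtained by killing the components of $Q$ against $\cO_X$, $\cU_{\omega_1}$ and $P$, the first vanishing is essentially automatic: it amounts to $\Ext_X^\bullet(Q,Q(-1))=0$ (proved in Lemma \ref{lemma:canonical-extensionQ}) together with the vanishings $\Ext_X^\bullet(\cO_X(1),Q(-1))=\Ext_X^\bullet(\cU_{\omega_1}(1),Q(-1))=\Ext_X^\bullet(P(1),Q(-1))=0$, all of which follow from the $\Ext$-vanishings already established for the collection $(\cA,\cA(1),\cB(2),\dots,\cB(9))$ in Lemma \ref{it's exceptional} together with Serre duality. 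So the content lies entirely in the second vanishing, $\Ext_X^\bullet(Q,Q'(1))=0$.

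For this I would first make $Q'$ explicit. Applying $\rL_{\langle\cB\rangle}$ to $Q$ means successively mutating $Q$ past $P$, then $\cU_{\omega_1}$, then $\cO_X$, i.e. forming the cone over the evaluation maps $\bigoplus_i \Ext_X^i(E,Q)\otimes E[-i]\to Q$ for $E=P,\cU_{\omega_1},\cO_X$ in turn. Using \eqref{basic-isos}, the isomorphisms recorded in the proofs of Lemma \ref{lemma:canonical-extensionQ} and Lemma \ref{it's exceptional}, and BBW, one computes the relevant graded $\Ext$-spaces $\Ext_X^\bullet(P,Q)$, $\Ext_X^\bullet(\cU_{\omega_1},Q)$, $\Ext_X^\bullet(\cO_X,Q)$; since $Q$ is a homogeneous bundle built from $\cU_{\omega_1}$ and $\cU_{\omega_1+\omega_2}$, each of these is concentrated in a single cohomological degree, so the mutation produces an honest homogeneous vector bundle $Q'$ (not just a complex), with a known filtration whose graded pieces are among $\cU_{\omega_1}$, $\cU_{\omega_1+\omega_2}$, $\cO_X$, $\cU_{\omega_1}$, $P$, suitably twisted. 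This realises the ``$Q'$ is a homogeneous exceptional bundle'' part of the Main Theorem as well.

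With $Q'$ so described, the complete orthogonality $\Ext_X^\bullet(Q,Q'(1))=0$ reduces, via the filtration of $Q'$ and the defining sequence \eqref{defQ} of $Q$, to BBW-type vanishings of the form $H^\bullet(X,\cU_{\omega_i}^\vee\otimes\cU_{\omega_j}(1))=0$ for the finitely many pairs of weights that occur, decomposing each tensor product into irreducibles and checking that every summand twisted by $\cO_X(1)$ is acyclic. The one place where a pure BBW argument may be insufficient — exactly as in the proof that $Q$ is exceptional — is when a tensor product contains a summand $\cU_\lambda$ with both $H^0$ and $H^1$ nonzero, so that the vanishing of the relevant $\Ext$ hinges on a connecting map being an isomorphism rather than on individual cohomology groups vanishing. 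In that case I would, as before, transfer the computation to the full flag $W=\Spin_{12}/B$, identify the map with a cup-product $H^0(\cL_\mu)\otimes H^1(\cL_\nu)\to H^1(\cL_{\mu+\nu})$, and invoke the non-degeneracy theorem of \cite{dimitrov-roth}. The main obstacle, then, is the bookkeeping: correctly identifying the graded pieces of $Q'$ from the three successive mutations and then organising the resulting list of weight-pair vanishings so that each is either immediate from BBW or reducible to a single application of \cite{dimitrov-roth}; no genuinely new idea beyond those already deployed in Section \ref{section:collection} should be needed.
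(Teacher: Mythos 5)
Your outline captures the right skeleton — establish $\Ext^\bullet_X(Q'(1),Q)=0$ from the semiorthogonality already in place, then prove $\Ext^\bullet_X(Q,Q'(1))=0$ by making $Q'$ explicit — but there are two gaps that prevent it from closing.

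First, the successive left mutations are not cones over evaluation maps from $Q$: after the first step you must compute $\Ext^\bullet_X(\cU_{\omega_1},\rL_P(Q))$ and $\Ext^\bullet_X(\cO_X,\rL_{\cU_{\omega_1}}\rL_P(Q))$, not $\Ext^\bullet_X(\cU_{\omega_1},Q)$ and $\Ext^\bullet_X(\cO_X,Q)$. This is not bookkeeping: the paper's argument hinges on the fact that $\Ext^\bullet_X(\cU_{\omega_1},\rL_P(Q))$ vanishes entirely (so the middle mutation is trivial), whereas $\Hom_X(\cU_{\omega_1},Q)$ is manifestly nonzero (it contains the injection from \eqref{defQ}). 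Establishing that vanishing requires showing that a certain connecting map $\Ext^1_X(N,\cU_{\omega_1}^\vee)\to\Ext^1_X(\cU_{\omega_1},\cU_{\omega_1}^\vee)$ is nonzero, which the paper gets from a non-degeneracy of $\HH^1(\Omega_X)\otimes\HH^1(\Omega_X)\to\HH^2(\Omega_X^2)$ together with the exceptionality of $\rL_P(Q)$ — an argument that is not in your list of ingredients.

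Second, and more fundamentally, your proposal omits the complex of Proposition \ref{prop_complex_6} (and the identification $R_6\simeq Q(-2)$ of Remark \ref{rem_R6}), which is the actual engine of the paper's proof. The kernel $K$ of $T\to\cU_{\omega_1+\omega_2}$ from that complex is what $\rL_P(Q)$ gets compared against, and it is the complex that yields both that $Q'$ is concentrated in degree $0$ and the containment $Q'(1)\in\langle\cU_{\omega_1}(-1),\cU_{\omega_1+\omega_2}(-1),\cU_{\omega_1}\rangle$ from which $\Ext^\bullet_X(Q,Q'(1))=0$ then follows by the vanishings of Lemma \ref{it's exceptional}. Your predicted graded pieces for $Q'$ (twists of $\cU_{\omega_1},\cU_{\omega_1+\omega_2},\cO_X,P$) are also not what occurs: the bundle $K$ is built from $\cU_{\omega_2+\omega_5}(-1)$ and $\cU_{\omega_3}$, weights outside your list, and without the complex there is no clear route to the needed description of $Q'$. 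So while the ``Dimitrov--Roth for cup-products'' mechanism is indeed used (for the computation $\Hom_X(P,Q)=V^{\omega_1}$), a genuinely new ingredient — the spinor-bundle complex of Section \ref{section:dissecting} — is required, contrary to your closing remark.
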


\begin{proof}
We know that $Q'(1)$ is an exceptional object that $\Ext_X^\bullet(Q'(1),Q)=0$, so we have to check that $Q'$ is concentrated in degree 0 and that $\Ext_X^\bullet(Q,Q'(1))=0$.
First we check that:
\begin{equation} \label{HomPQ}
\Ext^\bullet_X(P,Q)= \Hom_X(P,Q)=V^{\omega_1}.
\end{equation}
To see this, recall \eqref{defQ} and use that $(\cU_{\omega_1},P)$ is exceptional to get, for all $p \ge 0$:
\[
\Ext^p_X(P,Q) \simeq \Ext^p_X(P,\cU_{\omega_1+\omega_2}).
\]
Next, apply $\Hom_X(-,\cU_{\omega_1+\omega_2})$ to the sequence \eqref{defP} defining $P$
and work as in Lemma \ref{lemma:canonical-extensionQ} to show that :
\begin{align*}
&\Ext^\bullet_X(\cO_X,\cU_{\omega_1+\omega_2}) = \HH^0(\cU_{\omega_1+\omega2}) \simeq V^{\omega_1+\omega_2}, \\
&\Ext^{>0}_X(\cU_{\omega_2},\cU_{\omega_1+\omega_2}) = 
 \Ext^1_X(\cU_{\omega_2},\cU_{\omega_1+\omega_2}) \simeq \HH^1(\cU_{\omega_1+\omega_2+\omega_4}(-2)) \simeq V^{\omega_1+\omega_2},\\
 & \Hom_X(\cU_{\omega_2},\cU_{\omega_1+\omega_2}) = \HH^0(\cU_{\omega_1}) \simeq V^{\omega_1}. 
\end{align*}
Hence \eqref{HomPQ} holds if and only if the cup-product map below is non-degenerate:
\[
\Ext^1_X(\cU_{\omega_2},\cO_X) \otimes \Hom_X(\cO_X,\cU_{\omega_1+\omega_2}) 
\to \Ext^1_X(\cU_{\omega_2},\cU_{\omega_1+\omega_2}) 
\]
However, by the above analysis, using the notation of the proof of Lemma \ref{lemma:canonical-extensionQ}, this map is the cup-product 
\[
\HH^1(\cL_{\omega_4-2\omega_6}) \otimes \HH^0(\cL_{\omega_1+\omega_2}) 
\to \HH^1(\cL_{\omega_1+\omega_2+\omega_4-2\omega_6}) 
\]
and therefore it is non-degenerate by \cite{dimitrov-roth}. So \eqref{HomPQ} is proved. The resulting evaluation map $V^{\omega_1} \otimes P \to Q$ is surjective, as it results by tensoring \eqref{defP} by $V^{\omega_1}$ and considering the evaluation map to \eqref{defQ}; more precisely $\rL_P (Q)$ is an exceptional homogeneous bundle fitting into:
\begin{equation} \label{N1 and K}
0 \to \cU_{\omega_1}^\vee \to \rL_P (Q) \to N \to 0, \qquad \text{with} \qquad 0 \to \cU_{\omega_1} \xrightarrow{\alpha} N \to K \to 0,
\end{equation}
where $K$ is the kernel of the map $T \to \cU_{\omega_1+\omega_2}$ of Proposition \ref{prop_complex_6}. Note that $N$ fits into:
\begin{equation} \label{N1}
0 \to N \to V^{\omega_1} \otimes \cU_{\omega_2} \to \cU_{\omega_1+\omega_2} \to 0.
\end{equation}

Next, we show that $\cU_{\omega_1}$ is completely orthogonal to $\rL_P(Q)$, so we need to prove
\begin{equation} \label{UperpL}
    \Ext^\bullet_X(\cU_{\omega_1},\rL_P(Q))=0.
\end{equation} 
Using \eqref{N1 and K} and \eqref{N1}, one checks that \eqref{UperpL} is proved once we show that $\alpha$ induces a non-zero map:
\begin{equation} \label{mustbe}
\Ext^1_X(N,\cU_{\omega_1}^\vee)
\to \Ext^1_X(\cU_{\omega_1},\cU_{\omega_1}^\vee)
\end{equation}

To achieve this, first note that, as a consequence of the definition of $K$ in terms of $T$ and the definition of $T$, $K$ is an extension $$
    0\to \cU_{\omega_2+\omega_5}(-1)\to K \to \cU_{\omega_3}\to 0.
$$

Next, we work as in the proof of Lemma \ref{lemma:canonical-extensions} to check:
\begin{align*}
&\Ext_X^\bullet(\cU_{\omega_2+\omega_5}(-1),\cU_{\omega_1}^\vee)=
\Ext_X^1(\cU_{\omega_2+\omega_5}(-1),\cU_{\omega_1}^\vee) \simeq \HH^1(\cU_{\omega_4}(-2)) \simeq \HH^1(\Omega_X) \simeq \bC, \\
&\Ext_X^\bullet(\cU_{\omega_3},\cU_{\omega_1}^\vee)=\Ext_X^2(\cU_{\omega_3},\cU_{\omega_1}^\vee)\simeq \HH^2(\cU_{\omega_3+\omega_5}(-3)) \simeq \HH^2(\Omega_X^2) \simeq \bC, \\ 
&\Ext_X^1(\cU_{\omega_3},\cU_{\omega_2+\omega_5}(-1)) \simeq \HH^1(\cU_{\omega_4}(-2)) \simeq \HH^1(\Omega_X) \simeq \bC.
\end{align*}

Then, $\Ext^\bullet_X(K,\cU_{\omega_1}^\vee)$ vanishes if and only the following cup-product map is non-degenerate:
\[
\Ext_X^1(\cU_{\omega_3},\cU_{\omega_2+\omega_5}(-1)) \otimes \Ext_X^1(\cU_{\omega_2+\omega_5}(-1),\cU_{\omega_1}^\vee) \to 
\Ext_X^2(\cU_{\omega_3},\cU_{\omega_1}^\vee)
\]

But from we have just seen this map is the cup-product in cohomology:
\[
\HH^1(\Omega_X) \otimes \HH^1(\Omega_X) \to \HH^2(\Omega^2_X),
\]
and therefore it is non-degenerate. This proves $\Ext^\bullet_X(K,\cU_{\omega_1}^\vee)=0$.

Now we can check that \eqref{mustbe} is non-zero. Indeed, assume it was. Then $\alpha$ induces an exact sequence :
\[
0 \to \cU_{\omega_1} \oplus \cU_{\omega_1}^\vee \to \rL_P(Q) \to K \to 0.
\]
But then, since $\Ext^1_X(K,\cU_{\omega_1}^\vee)=0$, $\cU_{\omega_1}^\vee$ is a direct summand of $\rL_P(Q)$, which cannot happen since $\rL_P(Q)$ is exceptional.

We have now proved \eqref{UperpL}. Moreover, we get that $\alpha$ induces a diagram:
\[
\xymatrix@-2ex{
& 0 \ar[d]& 0 \ar[d] \\
& \cU_{\omega_1}^\vee \ar@{=}[r] \ar[d] & \cU_{\omega_1}^\vee \ar[d] \\
0 \ar[r] & V^{\omega_1} \otimes \cO_X \ar[d]\ar[r] & \rL_P(Q) \ar[r] \ar[d]&  K \ar[r] \ar@{=}[d] & 0 \\
0 \ar[r] & \cU_{\omega_1} \ar^-\alpha[r] \ar[d]& N \ar[r] \ar[d]& K \ar[r] \ar[d]& 0 \\
& 0 & 0 & 0
}
\]
The leftmost column is the tautological sequence \eqref{eq:tautological-exact-sequence} because the cup-product above is non-degenerate and thus \eqref{UperpL} is proved.
Moreover we get, from the previous diagram:
\[
Q' = \rL_{\langle \cB \rangle}(Q) \simeq \rL_{\cO_X}(\rL_P(Q)) \simeq \rL_{\cO_X}(K).
\]
Finally using the definition of $K$ we check $\HH^\bullet(K)=\HH^0(K)=V^{\omega_3}$ hence, by Proposition \ref{prop_complex_6}, $Q'$ is concentrated in degree 0 and we obtain:
\[
Q'(1) \in \langle \cU_{\omega_1}(-1),\cU_{\omega_1+\omega_2}(-1),\cU_{\omega_1}\rangle.
\]
Therefore, using Lemma \ref{it's exceptional} and \eqref{defQ} we get $\Ext_X^\bullet(Q,Q'(1))=0$.
\end{proof}

\begin{remark}
\label{rem_R6}
One can actually prove that the bundle $R_6$ appearing in Proposition \ref{prop_complex_6} satisfies:
\[
R_6 \simeq Q(-2) \simeq \rL_{\cU_{\omega_1}(-1)}(\rL_{\langle \cB \rangle}(Q)).
\]
Indeed, we checked that $K \simeq \rL_{\langle \cB \rangle}(Q)$ and one can prove $\Ext^\bullet_X(\cU_{\omega_1}(-1),K)\simeq V_{\omega_5}$, so $R_6 \simeq \rL_{\cU_{\omega_1}(-1)}(K)$ is exceptional, hence indecomposable, so by  Proposition \ref{prop_complex_6} it must be isomorphic to $Q(-2)$.
\end{remark}

\section{Generating more objects}
\label{more}

We come back to $X=\Spin_{12}/\rP_6$. The goal of this section is to show that the exceptional full triangulated subcategory $\cD$ of $\Db(X)$ generated by the exceptional Lefschetz collection of Section \ref{warming up} contains a bunch of vector bundles, which will be needed in the proof 
that $\cD^\perp=0$. From \eqref{eq:collection-d6-p6} we immediately have
\begin{align}
  \label{eq:containment-O}
   \cO(t) & \in \cD \quad \text{for} \quad t \in [0,9], \\
  \label{eq:containment-Udual}
   \cU^\vee(t) & \in \cD \quad \text{for} \quad t \in [0,9], \\
  \label{eq:containment-Lambda^2Udual}
   \wedge^2 \cU^\vee(t) & \in \cD \quad \text{for} \quad t \in [0,9], \\
  \label{eq:containment-Sigma21-Udual-initial}
   \Sigma^{2,1} \cU^\vee(t) & \in \cD \quad \text{for} \quad t \in [0,1].
\end{align}

Often we are going to use the tautological exact sequence
\begin{equation}\label{eq:tautological-exact-sequence}
  0 \to \cU \to V \otimes \cO \to \cU^{\vee} \to 0.
\end{equation}

\bigskip

Twisting \eqref{eq:tautological-exact-sequence} by $\cO(t)$ with $t \in [0,9]$ and
using \eqref{eq:containment-O}, \eqref{eq:containment-Udual} we immediately obtain
\begin{equation}\label{eq:containment-U}
  \cU(t) \in \cD \quad \text{for} \quad t \in [0,9].
\end{equation}

\bigskip

We also note that for $j \in [0,6]$ we have isomorphisms
\begin{equation*}
  \wedge^j \cU^\vee \simeq \wedge^{6-j} \cU (2)
  \quad \text{and} \quad
  \wedge^j \cU \simeq \wedge^{6-j} \cU^\vee (-2).
\end{equation*}

\bigskip

\begin{lemma}\label{lemma:filtrations-spinor-representations}
Seeing the spinor representations $\rV^{\omega_5}$ and $\rV^{\omega_6}$ as vector bundles on $X$, we have:
  \begin{enumerate}[label=\roman*)]
    \item the vector bundle $\rV^{\omega_5} \otimes \cO$ has an increasing filtration,
    whose factors are of the form
    \begin{equation*}
      \wedge^{2i+1} \, \cU^{\vee}(-1) \quad \text{for} \quad t \in [0,2]
    \end{equation*}
    \item the vector bundle $\rV^{\omega_6} \otimes \cO$ has an increasing filtration,
    whose factors are of the form
    \begin{equation*}
      \wedge^{2i} \, \cU^{\vee}(-1) \quad \text{for} \quad t \in [0,3]
    \end{equation*}
  \end{enumerate}
\end{lemma}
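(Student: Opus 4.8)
The plan is to realize both filtrations as the relative version over $X$ of a $\rP_6$-stable filtration of the spinor $\rP_6$-modules, exactly as sketched in Subsection~\ref{sec_spinor_bundles}. Fix a maximal isotropic subspace $U \subset V$; since $n=6$ is even, one has $\rV^{\omega_6} = S^+ = \bigoplus_i \wedge^{2i} U^\vee$ and $\rV^{\omega_5} = S^- = \bigoplus_i \wedge^{2i+1} U^\vee$. Inside $S^+$ (resp. $S^-$) consider the increasing chain of subspaces $F_{k+1} := \bigoplus_{j \le k} \wedge^{2j} U^\vee$ (resp. $F_{k+1} := \bigoplus_{j \le k} \wedge^{2j+1} U^\vee$). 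The first step is to check that each $F_{k+1}$ is a $\rP_6$-submodule. The Levi factor $L(\rP_6)$ acts on every $\wedge^\ell U^\vee$ through $\mathrm{GL}(U)$, hence preserves the exterior grading and a fortiori each $F_{k+1}$. The nilradical of $\rP_6$ is the summand $\wedge^2 U$ of $\wedge^2 V = \wedge^2 U \oplus (U \otimes U^\vee) \oplus \wedge^2 U^\vee \simeq \mathfrak{so}_V$ complementary to $\mathfrak{p}_6$ (consistently with the identification of the tangent space of $X$ at $[U]$ with $\wedge^2 U^\vee$ used in Lemma~\ref{lemma:canonical-extensions}), and in the Clifford action a vector of $U$ acts by contraction, so a bivector of $\wedge^2 U$ lowers the exterior degree by $2$ and carries $F_{k+1}$ into $F_k \subseteq F_{k+1}$.

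Having established that the chain is $\rP_6$-stable, I would pass to homogeneous bundles: a $\rP_6$-stable chain in a $\rP_6$-module sheafifies to an increasing filtration of the corresponding trivial homogeneous bundle by homogeneous subbundles, whose associated graded is the sum of the homogeneous bundles attached to the $L(\rP_6)$-subquotients $F_{k+1}/F_k \simeq \wedge^{2k+\epsilon} U^\vee$, with $\epsilon = 0$ in case (ii) (i.e. for $S^+ = \rV^{\omega_6}$) and $\epsilon = 1$ in case (i) (i.e. for $S^- = \rV^{\omega_5}$). It then remains to identify each graded piece with $\wedge^{2k+\epsilon} \cU^\vee(-1)$. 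This is a highest-weight bookkeeping: as a $\Spin_{12}$-weight the highest weight of the $L(\rP_6)$-subquotient $\wedge^{2k+\epsilon}U^\vee$ is $\omega_{2k+\epsilon} - \omega_6$, equivalently the central twist needed to promote the plain $\mathrm{GL}(U)$-representation $\wedge^{2k+\epsilon} U^\vee$ to the actual subquotient of $S^\pm$ is $\cU_{\omega_6}^{-1} = \cO_X(-1)$; this twist is the same for every $k$, since changing $k$ alters the $\mathrm{GL}(U)$-representation only by an exterior power of the standard one, which carries no twist relative to $\cO_X$. Relabelling $2k+\epsilon$ as the $2i+1$ (resp. $2i$) of the statement, the admissible ranges $i \in [0,2]$ (resp. $i \in [0,3]$) are exactly those for which the odd (resp. even) exterior power of a rank-$6$ bundle is nonzero.

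I do not anticipate a genuine obstacle: the argument is essentially bookkeeping, and the only delicate points are conventional — that the nilradical lowers rather than raises the exterior degree (so that the chain really is $\rP_6$-stable), and that the uniform central twist is $\cO_X(-1)$ and not $\cO_X(+1)$. Both can be anchored on the identification $\cF_1 = \cU^\vee(-1) \subset S^- \otimes \cO_X$ already recorded in Subsection~\ref{sec_spinor_bundles} (equivalently \cite[Proposition~6.3]{Ku08a}), which pins down the conventions once and for all.
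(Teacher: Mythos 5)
Your proposal is correct and follows essentially the same route as the paper: the paper's proof is a one-line citation to \cite[Proposition~6.3]{Ku08a} together with a pointer to the filtration $\cF_\bullet$ of Section~\ref{sec_spinor_bundles}, which is exactly the $\rP_6$-stable chain you construct and sheafify. The only difference is that you supply the details (Levi preserves the grading, the nilradical $\wedge^2 U$ acts by double contraction and lowers degree, the uniform twist is pinned down by $\cF_1=\cU^\vee(-1)$) that the paper delegates to the reference.
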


\begin{proof}
  This follows from \cite[Proposition 6.3]{Ku08a} (and is the same filtration $\cF_\bullet$ described in Section \ref{sec_spinor_bundles}).
\end{proof}

As a corollary we obtain the following.
\begin{corollary} 
  We have
  \begin{equation}\label{eq:containment-exterior-powers-Udual}
    \wedge^j \cU^\vee(t) \in \cD \quad \text{for} \quad
    \begin{cases}
      t \in [0,9] \quad \text{if} \quad j \in [0,2], \\
      t \in [0,7] \quad \text{if} \quad j \in [3,4], \\
      t \in [-2,7] \quad \text{if} \quad j = 5, \\
      t \in [-2,7] \quad \text{if} \quad j = 6.
    \end{cases}
  \end{equation}
\end{corollary}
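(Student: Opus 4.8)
The plan is to split the statement according to $j$, treating three regimes by different means. For $j\in\{0,1,2\}$ there is nothing new to prove: these are exactly the containments \eqref{eq:containment-O}, \eqref{eq:containment-Udual} and \eqref{eq:containment-Lambda^2Udual}. For $j\in\{5,6\}$ I would invoke the isomorphisms $\wedge^5\cU^\vee\simeq\cU(2)$ and $\wedge^6\cU^\vee\simeq\cO(2)$ recorded just before the statement, so that a twist by $\cO(t+2)$ reduces the claim directly to \eqref{eq:containment-U} and \eqref{eq:containment-O} respectively; this produces exactly the range $t+2\in[0,9]$, i.e.\ $t\in[-2,7]$.

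The real content is the middle regime $j\in\{3,4\}$, where the input is the filtration of the spinor bundles from Lemma \ref{lemma:filtrations-spinor-representations}. For $j=3$ I would twist the filtration of $\rV^{\omega_5}\otimes\cO$ by $\cO(t)$: since $\rV^{\omega_5}$ is a $\Spin_{12}$-representation, $\rV^{\omega_5}\otimes\cO(t)$ is a trivial bundle and hence lies in $\cD$ for $t\in[0,9]$ by \eqref{eq:containment-O}, while its graded pieces are $\cU^\vee(t-1)$, $\wedge^3\cU^\vee(t-1)$ and $\wedge^5\cU^\vee(t-1)\simeq\cU(t+1)$. The first and last of these lie in $\cD$ for $t-1\in[0,9]$ and $t+1\in[0,9]$ by \eqref{eq:containment-Udual} and \eqref{eq:containment-U}; since $\cD$ is triangulated, peeling off these two known pieces from the total object forces $\wedge^3\cU^\vee(t-1)\in\cD$ on the intersection of the admissible ranges, namely $t\in[1,8]$, i.e.\ $\wedge^3\cU^\vee(s)\in\cD$ for $s\in[0,7]$. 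The case $j=4$ is the same argument applied to the four-step filtration of $\rV^{\omega_6}\otimes\cO$, whose twisted graded pieces are $\cO(t-1)$, $\wedge^2\cU^\vee(t-1)$, $\wedge^4\cU^\vee(t-1)$ and $\wedge^6\cU^\vee(t-1)\simeq\cO(t+1)$; the three pieces other than $\wedge^4\cU^\vee(t-1)$ are covered by \eqref{eq:containment-O} and \eqref{eq:containment-Lambda^2Udual}, and the same extraction yields $\wedge^4\cU^\vee(s)\in\cD$ for $s\in[0,7]$.

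I do not expect any genuine obstacle here; the argument is purely formal once Lemma \ref{lemma:filtrations-spinor-representations} is granted. The only points requiring care are bookkeeping of twist ranges — one loses a unit at each end because of the $\cO(-1)$ shift in the graded factors $\cF_{i+1}/\cF_i$, and the extreme factors of the $\rV^{\omega_5}$-filtration involve both $\cU^\vee$ and its twisted dual $\cU$ — and the mild observation that recovering a single graded piece of a finite filtration from the total object and the remaining pieces uses only that $\cD$ is closed under cones and shifts, which holds since $\cD$ is a full triangulated subcategory.
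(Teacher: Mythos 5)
Your proof is correct and follows essentially the same route as the paper: both treat $j\le 2$ as already known, handle $j\in\{3,4\}$ by twisting the spinor-bundle filtrations of Lemma \ref{lemma:filtrations-spinor-representations} by $\cO(t)$ and peeling off the known graded pieces, and dispatch $j\in\{5,6\}$ via $\wedge^5\cU^\vee\simeq\cU(2)$ and $\wedge^6\cU^\vee\simeq\cO(2)$. The twist ranges you compute ($t\in[1,8]$ on the filtration, giving $s\in[0,7]$ after the $\cO(-1)$ shift) match the paper's.
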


\begin{proof}
  Cases with $j \in [0,2]$ we have already considered. We treat each $j \in [3,6]$ separately.
  \begin{enumerate}
    \item \textit{Case $j = 3$.} Twisting $\rV^{\omega_5} \otimes \cO$ by $\cO(t)$
    with $t \in [1,8]$, using Lemma \ref{lemma:filtrations-spinor-representations},
    the isomorphism $\cU(1) = \wedge^5 \cU^\vee(-1)$,
    \eqref{eq:containment-O},\eqref{eq:containment-Udual},\eqref{eq:containment-U},
    we obtain the claim.

    \smallskip

    \item \textit{Case $j = 4$.} Twisting $\rV^{\omega_6} \otimes \cO$ by $\cO(t)$
    with $t \in [1,8]$, using Lemma \ref{lemma:filtrations-spinor-representations},
    \eqref{eq:containment-O},\eqref{eq:containment-Lambda^2Udual}, we obtain the claim.

    \smallskip

    \item \textit{Case $j = 5$.} Twisting the isomorphism $\cU(2) = \wedge^5 \cU^\vee$
    by $\cO(t)$ with $t \in [-2,7]$ and using \eqref{eq:containment-U}, we obtain the claim.

    \smallskip

    \item \textit{Case $j = 6$.} Since $\wedge^6 \cU^\vee = \det(\cU^\vee) = \cO(2)$,
    the claim follows from \eqref{eq:containment-O}.
  \end{enumerate}
\end{proof}

\begin{lemma} 
  We have
  \begin{equation}\label{eq:containment-S^nUdual}
    S^j \cU^\vee(t) \in \cD \quad \text{for} \quad
    \begin{cases}
      t \in [0,9] \quad \text{if} \quad j \in [0,1], \\
      t \in [2,9] \quad \text{if} \quad j \geq 2.
    \end{cases}
  \end{equation}
\end{lemma}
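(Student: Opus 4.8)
The plan is to reduce the case $j \ge 2$ to the containments \eqref{eq:containment-exterior-powers-Udual} for exterior powers of $\cU$, which have already been established. For $j \in \{0,1\}$ there is nothing to prove: the assertion is exactly \eqref{eq:containment-O} and \eqref{eq:containment-Udual}.

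For $j \ge 2$, first I would write down the symmetric-power Koszul complex attached to the tautological surjection $V \otimes \cO \twoheadrightarrow \cU^\vee$ of \eqref{eq:tautological-exact-sequence}, whose kernel is the locally free sheaf $\cU$: for every $j \ge 1$ one has an exact complex of vector bundles
\[
0 \to \wedge^{r} \cU \otimes S^{j-r} V \otimes \cO \to \cdots \to \wedge^{2}\cU \otimes S^{j-2}V\otimes \cO \to \cU \otimes S^{j-1}V \otimes\cO \to S^j V \otimes\cO \to S^j \cU^\vee \to 0,
\]
with $r = \min(j,6)$ and the factors $S^{j-i}V$ regarded as trivial bundles (finite-dimensional multiplicity spaces). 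Since $\cD$ is a triangulated subcategory, $S^j\cU^\vee$ — being obtained from the terms $\wedge^{i}\cU \otimes S^{j-i}V\otimes \cO$, $0 \le i \le r$, by iterated cones — will lie in $\cD$ after twisting by $\cO(t)$ as soon as every $\wedge^{i}\cU(t)$, $0 \le i \le 6$, does.

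To check the latter I would use $\wedge^{i}\cU \simeq \wedge^{6-i}\cU^\vee(-2)$, so that $\wedge^{i}\cU(t)\simeq \wedge^{6-i}\cU^\vee(t-2)$, and then run $t \in [2,9]$, i.e. $t-2 \in [0,7]$, through \eqref{eq:containment-exterior-powers-Udual}. A short inspection over $i = 0,\dots,6$ shows that each required twist lands inside the intervals listed there, the tightest cases being $i \in \{2,3\}$, for which one needs precisely $t-2 \in [0,7]$. Tensoring by $\cO(t)$ and taking direct sums over the multiplicity spaces then gives $\wedge^{i}\cU \otimes S^{j-i}V\otimes \cO(t) \in \cD$ for $t \in [2,9]$, whence $S^j\cU^\vee(t)\in\cD$.

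This is essentially a bookkeeping argument and I do not anticipate any genuine obstacle. The one point worth keeping in mind is that the Koszul complex above has length exactly $\min(j,6)$, so that even for large $j$ no exterior power beyond $\wedge^{6}\cU$ occurs; this is what confines all contributions to the ranges covered by \eqref{eq:containment-exterior-powers-Udual} and makes the argument uniform in $j \ge 2$.
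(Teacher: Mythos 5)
Your argument is correct, and it takes a genuinely different (in fact cleaner) route than the paper's. The paper resolves $S^j\cU^\vee$ by the complex
\begin{equation*}
0 \to \wedge^j\cU \to \wedge^j V\otimes\cO \to \wedge^{j-1}V\otimes\cU^\vee \to \cdots \to V\otimes S^{j-1}\cU^\vee \to S^j\cU^\vee \to 0,
\end{equation*}
which puts the exterior powers on the trivial bundle $V\otimes\cO$ and the symmetric powers on $\cU^\vee$; since the middle terms involve $S^i\cU^\vee$ for $i<j$, the paper must argue by induction on $j$, with $j=2$ as the base case. You instead use the transposed Koszul complex $\wedge^\bullet\cU\otimes S^\bullet V \to S^j\cU^\vee$, which places the exterior powers on $\cU$ (so they are confined to $i\le 6$) and the symmetric powers on the trivial factor $V$ — hence all the middle terms are mere multiples of $\wedge^i\cU$, and the statement reduces in one step and uniformly in $j$ to the exterior-power containments \eqref{eq:containment-exterior-powers-Udual}, with no induction needed. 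Your bookkeeping is right: under $\wedge^i\cU\simeq\wedge^{6-i}\cU^\vee(-2)$ the binding constraints are $i\in\{2,3\}$, giving exactly $t-2\in[0,7]$, i.e.\ $t\in[2,9]$, matching the statement. Both approaches use the same external input; yours trades the paper's inductive step for a slightly less familiar but equally standard form of the Koszul resolution, and is arguably the shorter argument.
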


\begin{proof}
  For $j \in [0,1]$ the statement are known by \eqref{eq:containment-O} and
  \eqref{eq:containment-Udual}.

  \medskip

  \noindent \textit{Case $j = 2$.} From \eqref{eq:tautological-exact-sequence}
  we obtain the exact sequence
  \begin{equation*}
    0 \to \wedge^2 \cU \to \wedge^2 V \otimes \cO \to V \otimes \cU^\vee \to S^2 \cU^\vee \to 0.
  \end{equation*}
  Twisting this sequence by $\cO(t)$ with $t \in [2,9]$, using \eqref{eq:containment-O},
  \eqref{eq:containment-Udual}, the isomorphism $\wedge^2 \cU = \wedge^4 \cU^\vee(-2)$, \eqref{eq:containment-exterior-powers-Udual}, we see that all the terms of the
  sequence except for $S^2 \cU^\vee(t)$ are contained in $\cD$. Hence, the same holds
  for $S^2 \cU^\vee(t)$.

  \medskip

  \noindent \textit{Cases $j \geq 3$.}
  We argue by induction. For each $j \geq 3$ we consider the exact sequence
  \begin{equation*}
    0 \to \wedge^j \cU \to \wedge^j V \otimes \cO \to \wedge^{j-1} V \otimes \cU^\vee \to \wedge^{j-2} V \otimes S^2 \cU^\vee
    \to \dots \to V \otimes S^{j-1} \cU^\vee \to S^j \cU^\vee.
  \end{equation*}
  All the middle terms twisted by
  $\cO(t)$ with $t \in [2,9]$ are contained in $\cD$ by the induction assumption.
  For $j \in [3,6]$ the term $\wedge^j \cU(t) = \wedge^{6-j} \cU^\vee(t-2)$ is also in $\cD$
  for $t \in [2,9]$ by \eqref{eq:containment-exterior-powers-Udual}.
  For $j \geq 7$ this term vanishes. Hence, the claim follows.
\end{proof}

\begin{lemma}
  We have
  \begin{align}\label{eq:containment-S^2U}
    S^2 \cU(t) & \in   \cD &&\text{for} \quad t \in [0,9], \\
    \label{eq:containment-U-tensor-Udual}
    \cU \otimes \cU^\vee(t) & \in \cD && \text{for} \quad t \in [2,9],\\
\label{eq:containment-Udual-tensor-Udual}
    \cU^\vee \otimes \cU^\vee(t) & \in \cD &&\text{for} \quad t \in [2,9].
  \end{align}
\end{lemma}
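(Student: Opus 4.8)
The plan is to handle each of the three containments by exhibiting a short exact sequence (or a short Koszul-type complex) of vector bundles whose terms other than the one of interest are already known to lie in $\cD$, and then to invoke the fact that $\cD$, being generated by an exceptional collection, is a triangulated subcategory closed under direct sums and under passing to kernels of surjections between its objects. Concretely, from a four-term exact sequence $0\to E_0\to E_1\to E_2\to E_3\to 0$ with $E_1,E_2,E_3\in\cD$ one splits off $E_0'=\operatorname{coker}(E_0\to E_1)$, notes $E_0'\in\cD$ because it is (a shift of) the cone of $E_2\to E_3$, and then $E_0\in\cD$ because it is a shift of the cone of $E_1\to E_0'$.

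First I would treat $S^2\cU$, aiming for the full range $t\in[0,9]$. Applying the standard four-term Koszul-type complex attached to a short exact sequence $0\to A\to B\to C\to 0$ — the one with terms $S^2A,\ A\otimes B,\ \wedge^2B,\ \wedge^2C$ — to the tautological sequence \eqref{eq:tautological-exact-sequence} yields an exact sequence of vector bundles
\[
0\to S^2\cU\to V\otimes\cU\to\wedge^2 V\otimes\cO\to\wedge^2\cU^\vee\to 0.
\]
Twisting by $\cO(t)$ for $t\in[0,9]$, the three right-hand terms lie in $\cD$ by \eqref{eq:containment-U}, \eqref{eq:containment-O} and \eqref{eq:containment-Lambda^2Udual} respectively, so $S^2\cU(t)\in\cD$ for $t\in[0,9]$, which is \eqref{eq:containment-S^2U}. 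Next, using $\cU\otimes\cU\simeq S^2\cU\oplus\wedge^2\cU$ together with $\wedge^2\cU\simeq\wedge^4\cU^\vee(-2)$, and combining the previous step with the range $t\in[0,7]$ for $\wedge^4\cU^\vee$ in \eqref{eq:containment-exterior-powers-Udual}, we get $\cU\otimes\cU(t)\in\cD$ for $t\in[2,9]$.

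Finally, tensoring \eqref{eq:tautological-exact-sequence} by $\cU$ and by $\cU^\vee$ gives the short exact sequences
\[
0\to\cU\otimes\cU\to V\otimes\cU\to\cU\otimes\cU^\vee\to 0, \qquad
0\to\cU\otimes\cU^\vee\to V\otimes\cU^\vee\to\cU^\vee\otimes\cU^\vee\to 0.
\]
Twisting the first by $\cO(t)$ with $t\in[2,9]$ and using $\cU\otimes\cU(t)\in\cD$ together with \eqref{eq:containment-U} gives \eqref{eq:containment-U-tensor-Udual}; twisting the second by $\cO(t)$ with $t\in[2,9]$ and using \eqref{eq:containment-U-tensor-Udual} together with \eqref{eq:containment-Udual} gives \eqref{eq:containment-Udual-tensor-Udual}. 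The only point needing care is the first step: one must use the genuine four-term Koszul complex for $S^2\cU$ (not a crude short exact sequence) in order to reach all of $t\in[0,9]$, and the narrower range $t\in[2,9]$ in the two tensor-product statements is then forced — and unavoidable with these methods — by the two-twist shift in $\wedge^2\cU\simeq\wedge^4\cU^\vee(-2)$ and the range for $\wedge^4\cU^\vee$ in \eqref{eq:containment-exterior-powers-Udual}.
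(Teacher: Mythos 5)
Your argument is correct and follows essentially the same twist-and-chase strategy as the paper, with only cosmetic differences in which Koszul-type sequences you pick: you resolve $\wedge^2\cU^\vee$ by $0\to S^2\cU\to V\otimes\cU\to\wedge^2 V\otimes\cO\to\wedge^2\cU^\vee\to 0$ whereas the paper uses $0\to S^2\cU\to S^2V\otimes\cO\to V\otimes\cU^\vee\to\wedge^2\cU^\vee\to 0$, and you reach $\cU\otimes\cU^\vee$ via the splitting $\cU\otimes\cU\simeq S^2\cU\oplus\wedge^2\cU$ and the tautological sequence tensored by $\cU$, while the paper instead uses the filtration of $S^2V\otimes\cO$ with factors $S^2\cU$, $\cU\otimes\cU^\vee$, $S^2\cU^\vee$. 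Both routes consume the same input containments, yield the same ranges of twists, and agree verbatim on the final step for $\cU^\vee\otimes\cU^\vee$.
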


\begin{proof}
  From \eqref{eq:tautological-exact-sequence} we get an exact sequence
  \begin{equation*}
    0 \to S^2 \cU \to S^2 V \otimes \cO \to V \otimes \cU^\vee \to \wedge^2 \cU^{\vee} \to 0.
  \end{equation*}
  Twisting this sequence by $\cO(t)$ with $t \in [0,9]$ and using \eqref{eq:containment-O},
  \eqref{eq:containment-Udual}, \eqref{eq:containment-exterior-powers-Udual}
  we obtain \eqref{eq:containment-S^2U}.

  \medskip

  One can reformulate \eqref{eq:tautological-exact-sequence} by saying that the bundle
  $V \otimes \cO$ has a filtration with factors $\cU$ and $\cU^\vee$. Then, taking
  symmetric square, we obtain on  $S^2 V \otimes \cO$ a filtration with factors $
    S^2 \cU , \cU \otimes \cU^\vee ,  S^2 \cU^\vee$. Twisting it by $\cO(t)$ with $t \in [2,9]$, using \eqref{eq:containment-O}, \eqref{eq:containment-S^2U}, and \eqref{eq:containment-S^nUdual}, we get
  \eqref{eq:containment-U-tensor-Udual}.

  \medskip

  Finally, tensoring \eqref{eq:tautological-exact-sequence} with $\cU^\vee$ we get $0 \to \cU \otimes \cU^\vee \to V \otimes \cU^\vee \to \cU^\vee \otimes \cU^\vee \to 0$. Together with \eqref{eq:containment-Udual} and \eqref{eq:containment-U-tensor-Udual}
  it implies \eqref{eq:containment-Udual-tensor-Udual}.
\end{proof}

\bigskip

Recall that from Proposition \ref{prop_complex_6} and Remark \ref{rem_R6} we have the exact sequence
\begin{equation*}
0\to Q(-2) \to \cU^\vee(-1)\otimes S^+ \to \wedge^3 V\otimes \cO \to T \to \Sigma^{2,1}\cU^\vee \to 0 ,
\end{equation*}
with 
$T$ defined by $0\to \cU^\vee \to V\otimes \wedge^2 \cU^\vee \to T \to 0$.
Twisting this sequence by $\cO(2)$ and
using \eqref{eq:containment-O}-\eqref{eq:containment-Sigma21-Udual-initial},
we obtain $\Sigma^{2,1} \cU^\vee(2) \in \cD$. Iterating this process one shows
\begin{equation}\label{eq:containment-Sigma21-Udual}
  \Sigma^{2,1} \cU^\vee \in \cD \quad \text{for} \quad t \in [0,9].
\end{equation}

\begin{lemma}
  We have
  \begin{align}\label{eq:containment-Lambda^2U-tensor-Udual}
    &\wedge^2 \cU \otimes \cU^\vee(t) \in \cD && \text{for} \quad t \in [2,9],\\
    &\label{eq:containment-Lambda^2U-tensor-U}
    \wedge^2 \cU \otimes \cU(t) \in \cD && \text{for} \quad t \in [2,9], \\
&\label{eq:containment-Udual-tensor-Lambda^2Udual}
        \cU^\vee \otimes \wedge^2 \cU^\vee(t) \in \cD && \text{for} \quad t \in [0,7],\\
  & \label{eq:containment-U-tensor-Lambda^2Udual}
    \cU \otimes \wedge^2 \cU^\vee(t) \in \cD && \text{for} \quad t \in [0,7].
  \end{align}
\end{lemma}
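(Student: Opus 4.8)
The plan is to deduce all four containments from the tautological sequence \eqref{eq:tautological-exact-sequence}, the identities $\wedge^j\cU^\vee\simeq\wedge^{6-j}\cU(2)$, the $\mathrm{GL}(\cU)$-equivariant Pieri decompositions of tensor products of $\cU$ and $\cU^\vee$, the containments \eqref{eq:containment-exterior-powers-Udual}--\eqref{eq:containment-Sigma21-Udual} already established, and --- for the two items involving $\wedge^2\cU$ --- the exact complex of Proposition \ref{prop_complex_6} (with $R_6\simeq Q(-2)$ from Remark \ref{rem_R6}) together with its dual. I will use repeatedly that, in a short exact sequence $0\to A\to B\to C\to0$ of objects of $\Db(X)$, membership of any two terms in $\cD$ forces that of the third, and that a direct summand of an object of $\cD$ lies in $\cD$.

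I would treat the two ``easy'' items first. Over the Levi factor $\mathrm{GL}(\cU)$ one has $\cU^\vee\otimes\wedge^2\cU^\vee\simeq\wedge^3\cU^\vee\oplus\Sigma^{2,1}\cU^\vee$; by \eqref{eq:containment-exterior-powers-Udual} the first summand, twisted by $\cO(t)$, lies in $\cD$ for $t\in[0,7]$, and by \eqref{eq:containment-Sigma21-Udual} so does the second, so \eqref{eq:containment-Udual-tensor-Lambda^2Udual} follows. Tensoring \eqref{eq:tautological-exact-sequence} by $\wedge^2\cU^\vee$ and twisting by $\cO(t)$ with $t\in[0,7]$, the middle term $V\otimes\wedge^2\cU^\vee(t)$ lies in $\cD$ by \eqref{eq:containment-exterior-powers-Udual} and the quotient by \eqref{eq:containment-Udual-tensor-Lambda^2Udual}, hence so does the sub $\cU\otimes\wedge^2\cU^\vee(t)$; this is \eqref{eq:containment-U-tensor-Lambda^2Udual}.

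For \eqref{eq:containment-Lambda^2U-tensor-Udual} and \eqref{eq:containment-Lambda^2U-tensor-U} I would make two reductions. First, tensoring \eqref{eq:tautological-exact-sequence} by $\wedge^2\cU\simeq\wedge^4\cU^\vee(-2)$ and twisting by $\cO(t)$ with $t\in[2,9]$, the middle term becomes $V\otimes\wedge^4\cU^\vee(t-2)$, which is in $\cD$ by \eqref{eq:containment-exterior-powers-Udual}; thus \eqref{eq:containment-Lambda^2U-tensor-Udual} (the quotient) follows once \eqref{eq:containment-Lambda^2U-tensor-U} (the sub) is known. Second, $\wedge^2\cU\otimes\cU\simeq\wedge^3\cU\oplus\Sigma^{2,1}\cU$ with $\wedge^3\cU(t)=\wedge^3\cU^\vee(t-2)\in\cD$ for $t\in[2,9]$, so everything reduces to proving $\Sigma^{2,1}\cU(t)\in\cD$ for $t\in[2,9]$. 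To handle this bundle --- which, in contrast to $\Sigma^{2,1}\cU^\vee$, is not a twist of anything already in hand --- I would dualize the complex of Proposition \ref{prop_complex_6}, obtaining the exact complex
\[
0\to\Sigma^{2,1}\cU\to T^\vee\to\wedge^3 V\otimes\cO\to S^+\otimes\cU(1)\to Q^\vee(2)\to0,
\]
where $T^\vee$ fits into $0\to T^\vee\to V\otimes\wedge^2\cU\to\cU\to0$ and, dualizing \eqref{defQ}, $Q^\vee$ fits into $0\to\Sigma^{2,1}\cU\to Q^\vee\to\cU\to0$. Twisting by $\cO(t)$ and breaking the five-term complex into short exact sequences, every term other than $\Sigma^{2,1}\cU(t)$ and $Q^\vee(t+2)$ is already known to lie in $\cD$ for $t$ in the relevant range; feeding in the sequence $0\to\Sigma^{2,1}\cU(t+2)\to Q^\vee(t+2)\to\cU(t+2)\to0$ turns the complex into an inductive step linking $\Sigma^{2,1}\cU(t)$ to $\Sigma^{2,1}\cU(t+2)$, which --- once the base cases are supplied from \eqref{eq:containment-Sigma21-Udual-initial}, \eqref{eq:containment-Sigma21-Udual} and the remaining spinor-module decompositions of $S^+\otimes(-)$ and $\wedge^k V\otimes(-)$ --- yields $\Sigma^{2,1}\cU(t)\in\cD$ for $t\in[2,9]$.

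The main obstacle is exactly this last step. Since $\Sigma^{2,1}\cU$ is a ``dual-type'' homogeneous bundle, the only tool that produces it inside $\cD$ is the dualized complex above, and because $Q^\vee$ is itself an extension built out of $\Sigma^{2,1}\cU$ the argument is genuinely self-referential: one cannot conclude in a single stroke but must organise a careful induction on the twist, checking at each step that the auxiliary bundles fall in the window $[0,9]$ where the Lefschetz collection lives and pinning down the base cases (which may require a few further $\mathrm{GL}(\cU)$- and spinor-decompositions). This is the one place in the lemma where the full strength of Proposition \ref{prop_complex_6} and Remark \ref{rem_R6} is needed; the other three items are formal consequences of Pieri's rule and the tautological sequence.
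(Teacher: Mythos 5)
Your treatment of \eqref{eq:containment-Udual-tensor-Lambda^2Udual} and \eqref{eq:containment-U-tensor-Lambda^2Udual} matches the paper's proof exactly (Pieri decomposition plus the tautological sequence). For the first two items, however, you have chosen the wrong direction of deduction, and this creates a genuine gap. You propose to prove \eqref{eq:containment-Lambda^2U-tensor-U} (the sub in the tautological sequence tensored with $\wedge^2\cU$) and then deduce \eqref{eq:containment-Lambda^2U-tensor-Udual} (the quotient). This forces you to confront $\Sigma^{2,1}\cU(t)\in\cD$, a bundle the paper never needs. The paper instead proves \eqref{eq:containment-Lambda^2U-tensor-Udual} \emph{first}, directly, by tensoring the four-term Koszul-type complex
\[
0 \to \wedge^2 \cU \to \wedge^2 V \otimes \cO \to V \otimes \cU^\vee \to S^2 \cU^\vee \to 0
\]
by $\cU^\vee$ and decomposing $\cU^\vee\otimes\cU^\vee=\wedge^2\cU^\vee\oplus S^2\cU^\vee$ and $S^2\cU^\vee\otimes\cU^\vee=S^3\cU^\vee\oplus\Sigma^{2,1}\cU^\vee$; for $t\in[2,9]$ every term except $\wedge^2\cU\otimes\cU^\vee$ is already known to lie in $\cD$ via \eqref{eq:containment-Udual}, \eqref{eq:containment-exterior-powers-Udual}, \eqref{eq:containment-S^nUdual} and \eqref{eq:containment-Sigma21-Udual}. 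Then \eqref{eq:containment-Lambda^2U-tensor-U} is deduced from the tautological sequence in the direction opposite to yours.

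The concrete problem with your route is the induction you sketch for $\Sigma^{2,1}\cU(t)$. You acknowledge that the dualized complex of Proposition \ref{prop_complex_6}, after substituting $0\to\Sigma^{2,1}\cU(t{+}2)\to Q^\vee(t{+}2)\to\cU(t{+}2)\to0$, only produces a recursion linking $\Sigma^{2,1}\cU(t)$ to $\Sigma^{2,1}\cU(t{+}2)$, and you defer the base cases. But no base cases are available: $\Sigma^{2,1}\cU=\cU_{\omega_4+\omega_5}(-3)$ is an irreducible bundle that is \emph{not} a twist of $\Sigma^{2,1}\cU^\vee=\cU_{\omega_1+\omega_2}$ (their highest weights differ by a non-scalar), so none of \eqref{eq:containment-O}--\eqref{eq:containment-Sigma21-Udual} give you any instance of $\Sigma^{2,1}\cU(t)\in\cD$. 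Whether you try to descend (base cases at $t=10,11$, too high) or ascend (base cases at $t=2,3$, not established), the recursion has no anchor, and the auxiliary terms $T^\vee(t)$ and $S^+\otimes\cU(t{+}1)$ impose their own twist-windows that would further constrain the range. The dualized spinor complex genuinely shifts the problem rather than solving it. In short: your reductions are correct, but the hard step you created for yourself has no visible resolution, while the paper's ordering avoids $\Sigma^{2,1}\cU$ altogether.
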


\begin{proof}
  To show \eqref{eq:containment-Lambda^2U-tensor-Udual} we consider the exact sequence
  \begin{equation*}
    0 \to \wedge^2 \cU \to \wedge^2 V \otimes \cO \to V \otimes \cU^\vee \to S^2 \cU^\vee \to 0
  \end{equation*}
  obtained from \eqref{eq:tautological-exact-sequence}. Tensoring it by $\cU^\vee$
  we obtain the exact sequence
  \begin{equation*}
    0 \to \wedge^2 \cU \otimes \cU^\vee \to \wedge^2 V \otimes \cU^\vee \to V \otimes \cU^\vee \otimes \cU^\vee \to S^2 \cU^\vee \otimes \cU^\vee \to 0.
  \end{equation*}
  Note that we have
  \begin{equation*}
    \begin{aligned}
      & \cU^\vee \otimes \cU^\vee = \wedge^2 \cU^\vee \oplus S^2 \cU^\vee, \\
      & S^2 \cU^\vee \otimes \cU^\vee = S^3 \cU^\vee \oplus \Sigma^{2,1} \cU^\vee.
    \end{aligned}
  \end{equation*}
  Twisting by $\cO(t)$ with $t \in [2,9]$ and using
  \eqref{eq:containment-exterior-powers-Udual},
  \eqref{eq:containment-S^nUdual}, \eqref{eq:containment-Sigma21-Udual}
  we obtain the claim.

  \bigskip

  To show \eqref{eq:containment-Lambda^2U-tensor-U} we tensor the exact sequence
  \eqref{eq:tautological-exact-sequence} by $\wedge^2 \cU$ and get the exact sequence
  \begin{equation*}
    0 \to \cU \otimes \wedge^2 \cU \to V \otimes \wedge^2 \cU \to \cU^\vee \otimes \wedge^2 \cU \to 0.
  \end{equation*}
  Twisting by $\cO(t)$ with t $\in [2,9]$,
  using the isomorphism $\wedge^2 \cU = \wedge^4 \cU^\vee(-2)$,
  \eqref{eq:containment-exterior-powers-Udual},
  \eqref{eq:containment-Lambda^2U-tensor-Udual}
  we get the claim.

  \bigskip

  To show \eqref{eq:containment-Udual-tensor-Lambda^2Udual} we note
  \begin{equation*}
    \cU^\vee \otimes \wedge^2 \cU^\vee \simeq \wedge^3 \cU^\vee \oplus \Sigma^{2,1} \cU^\vee.
  \end{equation*}
  Twisting by $\cO(t)$ with $t \in [0,7]$, using \eqref{eq:containment-Sigma21-Udual}
  and \eqref{eq:containment-exterior-powers-Udual} we obtain the claim.

  \bigskip

  To show \eqref{eq:containment-U-tensor-Lambda^2Udual} we tensor the exact sequence
  \eqref{eq:tautological-exact-sequence} by $\wedge^2 \cU^\vee$ to get
  \begin{equation*}
    0 \to \cU \otimes \wedge^2 \cU^\vee \to V \otimes \wedge^2 \cU^\vee \to
    \cU^\vee \otimes \wedge^2 \cU^\vee \to 0.
  \end{equation*}
  Twisting by $\cO(t)$ with $t \in [0,7]$, using \eqref{eq:containment-Udual-tensor-Lambda^2Udual}
  and \eqref{eq:containment-exterior-powers-Udual} we obtain the claim.
\end{proof}

At this point we have proved the following.
\begin{corollary}\label{corollary:almost-all-containments} We have:
\begin{enumerate}[label=\roman*)]
    \item \label{Ci} $\cU^\vee \otimes \wedge^0 \cU^\vee(t) \in \cD$ for $t \in [0,9]$,
    \item \label{Cii} $\cU^\vee \otimes \wedge^1 \cU^\vee(t) \in \cD$ for $t \in [2,9]$,
    \item \label{Ciii} $\cU^\vee \otimes \wedge^2 \cU^\vee(t) \in \cD$ for $t \in [0,7]$, 
    \item \label{Civ} $\cU^\vee \otimes \wedge^4 \cU^\vee(t) \in \cD$ for $t \in [0,7]$,
    \item \label{Cv} $\cU^\vee \otimes \wedge^5 \cU^\vee(t) \in \cD$ for $t \in [0,7]$,
    \item  \label{Cvi} $\cU^\vee \otimes \wedge^6 \cU^\vee(t) \in \cD$  for $t \in [-2,7]$.
\end{enumerate}  
\end{corollary}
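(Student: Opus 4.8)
The plan is to recognize that Corollary~\ref{corollary:almost-all-containments} is purely a matter of bookkeeping: each of the six items follows from a containment already established in the lemmas of this section, once one rewrites $\wedge^j\cU^\vee$ through the canonical identifications $\wedge^j\cU^\vee\simeq\wedge^{6-j}\cU(2)$ recorded above. So the first step is to single out the three relevant special cases $\wedge^4\cU^\vee\simeq\wedge^2\cU(2)$, $\wedge^5\cU^\vee\simeq\cU(2)$ and $\wedge^6\cU^\vee\simeq\det\cU^\vee\simeq\cO(2)$.

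With these in hand one matches the items one at a time. Items~\ref{Ci}, \ref{Cii} and \ref{Ciii} are literally \eqref{eq:containment-Udual}, \eqref{eq:containment-Udual-tensor-Udual} and \eqref{eq:containment-Udual-tensor-Lambda^2Udual}, since $\wedge^0\cU^\vee=\cO$ and $\wedge^1\cU^\vee=\cU^\vee$. For item~\ref{Civ} one writes $\cU^\vee\otimes\wedge^4\cU^\vee(t)\simeq(\wedge^2\cU\otimes\cU^\vee)(t+2)$ and applies \eqref{eq:containment-Lambda^2U-tensor-Udual}, whose range $t+2\in[2,9]$ gives $t\in[0,7]$. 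For item~\ref{Cv} one writes $\cU^\vee\otimes\wedge^5\cU^\vee(t)\simeq(\cU\otimes\cU^\vee)(t+2)$ and applies \eqref{eq:containment-U-tensor-Udual}, again yielding $t\in[0,7]$. For item~\ref{Cvi} one writes $\cU^\vee\otimes\wedge^6\cU^\vee(t)\simeq\cU^\vee(t+2)$ and applies \eqref{eq:containment-Udual}, so that $t+2\in[0,9]$ becomes $t\in[-2,7]$.

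There is no genuine obstacle here, as all the substantive work was carried out in the preceding lemmas; the only things to watch are the shifts of the twist intervals by two after each tensor with $\cO(2)$, and the fact that the missing value $j=3$ --- that is, the containment $\cU^\vee\otimes\wedge^3\cU^\vee(t)\in\cD$ --- is deliberately not part of this corollary and has to be treated separately, being the one that the elementary filtration and exact-sequence arguments used so far do not reach.
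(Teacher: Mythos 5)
Your proof is correct and matches the paper's own argument exactly: the first three items are direct restatements of \eqref{eq:containment-Udual}, \eqref{eq:containment-Udual-tensor-Udual}, \eqref{eq:containment-Udual-tensor-Lambda^2Udual}, and the last three follow from the identifications $\wedge^4\cU^\vee\simeq\wedge^2\cU(2)$, $\wedge^5\cU^\vee\simeq\cU(2)$, $\wedge^6\cU^\vee\simeq\cO(2)$ together with \eqref{eq:containment-Lambda^2U-tensor-Udual}, \eqref{eq:containment-U-tensor-Udual}, \eqref{eq:containment-Udual}. You also correctly identify the interval shifts by two and that the $j=3$ case is deliberately omitted.
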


\begin{proof}
We already proved these statements. Indeed, \ref{Ci} is \eqref{eq:containment-Udual}, \ref{Cii} is
    \eqref{eq:containment-Udual-tensor-Udual}, 
    \ref{Ciii} is \eqref{eq:containment-Udual-tensor-Lambda^2Udual},
    \ref{Civ} follows from $\cU^\vee \otimes \wedge^4 \cU^\vee(t) \simeq \cU^\vee \otimes \wedge^2\cU(t+2)$ and \eqref{eq:containment-Lambda^2U-tensor-Udual},
    \ref{Cv} follows from $\cU^\vee \otimes \wedge^5 \cU^\vee(t) \simeq \cU^\vee \otimes \cU(t+2)$ and \eqref{eq:containment-U-tensor-Udual}
and \ref{Cvi} follows from $\wedge^6 \cU^\vee \simeq \cO(2)$.
\end{proof}

Thus, we are still missing the objects $\cU^\vee \otimes \wedge^3 \cU^\vee(t)$,
and the range of $t$ for $\cU^\vee \otimes \cU^\vee(t)$ needs to be extended.
This is our next goal.

\begin{lemma} We have
  \begin{align}\label{eq:containment-Udual-tensor-Lambda^3Udual}
    \cU^\vee \otimes \wedge^3 \cU^\vee(t) &\in \cD & \text{for} \quad t \in [2,7],\\
  \label{eq:containment-U-tensor-Lambda^3Udual}
    \cU \otimes \wedge^3 \cU^\vee(t) &\in \cD & \text{for} \quad t \in [2,7] ,\\
\label{eq:containment-Udual-tensor-Lambda^3U}
    \cU^\vee \otimes \wedge^3 \cU(t) &\in \cD &\text{for} \quad t \in [4,9].
  \end{align}
\end{lemma}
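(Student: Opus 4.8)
The plan is to derive all three containments by chasing short exact sequences inside the triangulated category $\cD$, using only the tautological sequence \eqref{eq:tautological-exact-sequence}, the spinor filtration of Lemma~\ref{lemma:filtrations-spinor-representations}, and the containments already collected, most notably Corollary~\ref{corollary:almost-all-containments} and \eqref{eq:containment-exterior-powers-Udual}.

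The only genuinely new input is \eqref{eq:containment-Udual-tensor-Lambda^3Udual}, which I would prove first. Tensor the filtration $0\subset\cF_1\subset\cF_2\subset\rV^{\omega_5}\otimes\cO$ of Lemma~\ref{lemma:filtrations-spinor-representations}(i), whose successive quotients are $\cU^\vee(-1)$, $\wedge^3\cU^\vee(-1)$ and $\wedge^5\cU^\vee(-1)$, by the bundle $\cU^\vee(t+1)$. Since tensoring preserves the filtration term by term, this produces two short exact sequences
\begin{align*}
0 &\to \cU^\vee\otimes\cU^\vee(t) \to \cF_2\otimes\cU^\vee(t+1) \to \cU^\vee\otimes\wedge^3\cU^\vee(t) \to 0,\\
0 &\to \cF_2\otimes\cU^\vee(t+1) \to \rV^{\omega_5}\otimes\cU^\vee(t+1) \to \cU^\vee\otimes\wedge^5\cU^\vee(t) \to 0.
\end{align*}
As $\rV^{\omega_5}\otimes\cO$ is a trivial bundle, $\rV^{\omega_5}\otimes\cU^\vee(t+1)$ is a direct sum of copies of $\cU^\vee(t+1)$, hence lies in $\cD$ for $t+1\in[0,9]$ by \eqref{eq:containment-Udual}; and $\cU^\vee\otimes\wedge^5\cU^\vee(t)\in\cD$ for $t\in[0,7]$ by Corollary~\ref{corollary:almost-all-containments}\,\ref{Cv}. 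The second sequence then gives $\cF_2\otimes\cU^\vee(t+1)\in\cD$ for $t\in[0,7]$, and plugging this into the first sequence, together with $\cU^\vee\otimes\cU^\vee(t)\in\cD$ for $t\in[2,9]$ from \eqref{eq:containment-Udual-tensor-Udual}, yields $\cU^\vee\otimes\wedge^3\cU^\vee(t)\in\cD$ for $t\in[2,7]$.

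For \eqref{eq:containment-U-tensor-Lambda^3Udual} I would simply tensor \eqref{eq:tautological-exact-sequence} by $\wedge^3\cU^\vee$ to get $0\to\cU\otimes\wedge^3\cU^\vee\to V\otimes\wedge^3\cU^\vee\to\cU^\vee\otimes\wedge^3\cU^\vee\to 0$; twisting by $\cO(t)$ with $t\in[2,7]$, the middle term lies in $\cD$ by \eqref{eq:containment-exterior-powers-Udual} and the right-hand term by the case just established, so $\cU\otimes\wedge^3\cU^\vee(t)\in\cD$ for $t\in[2,7]$. Finally \eqref{eq:containment-Udual-tensor-Lambda^3U} follows at once from the isomorphism $\wedge^3\cU\simeq\wedge^3\cU^\vee(-2)$, since then $\cU^\vee\otimes\wedge^3\cU(t)\simeq\cU^\vee\otimes\wedge^3\cU^\vee(t-2)$ lies in $\cD$ for $t-2\in[2,7]$, i.e.\ for $t\in[4,9]$, by \eqref{eq:containment-Udual-tensor-Lambda^3Udual}.

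I expect the only delicate point to be the bookkeeping in the first step: one must check that the filtration of $\rV^{\omega_5}\otimes\cO$ does survive tensoring with $\cU^\vee(t+1)$ (it does, since tensoring is exact) and, more to the point, that the three twist windows that enter — $[-1,8]$ for $\rV^{\omega_5}\otimes\cU^\vee(t+1)$, $[0,7]$ for $\cU^\vee\otimes\wedge^5\cU^\vee(t)$, and $[2,9]$ for $\cU^\vee\otimes\cU^\vee(t)$ — intersect in exactly $[2,7]$, which is the range claimed. Everything else is a formal manipulation of exact triangles in $\cD$.
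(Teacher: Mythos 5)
Your proposal is correct and follows the same route as the paper: tensor the spinor filtration of $\rV^{\omega_5}\otimes\cO$ by $\cU^\vee$ (with a twist), use \eqref{eq:containment-Udual-tensor-Udual} and Corollary~\ref{corollary:almost-all-containments}\,\ref{Cv} (equivalently \eqref{eq:containment-U-tensor-Udual}, via $\wedge^5\cU^\vee\simeq\cU(2)$) to control the outer factors, extract \eqref{eq:containment-Udual-tensor-Lambda^3Udual}, then deduce \eqref{eq:containment-U-tensor-Lambda^3Udual} from the tautological sequence and \eqref{eq:containment-Udual-tensor-Lambda^3U} from $\wedge^3\cU\simeq\wedge^3\cU^\vee(-2)$. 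The index bookkeeping checks out, and the only differences from the paper are cosmetic.
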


\begin{proof}
  To show \eqref{eq:containment-Udual-tensor-Lambda^3Udual} we proceed as follows.
  By Lemma \ref{lemma:filtrations-spinor-representations} we have
  \begin{equation*}
    V^{\omega_5} \otimes \cO = [\cU^{\vee}(-1), \, \wedge^3\cU^{\vee}(-1), \, \wedge^5\cU^{\vee}(-1) = \cU(1)].
  \end{equation*}
  Tensoring it by $\cU^\vee$ we get
  \begin{equation*}
    V^{\omega_5} \otimes \cU^\vee = [\cU^{\vee} \otimes \cU^\vee (-1), \,
    \wedge^3\cU^{\vee} \otimes \cU^\vee (-1), \, \cU \otimes \cU^\vee (1)].
  \end{equation*}
  Twisting it by $\cO(t)$ with $t \in [3,8]$ and using
  \eqref{eq:containment-Udual},
  \eqref{eq:containment-U-tensor-Udual},
  \eqref{eq:containment-Udual-tensor-Udual}
  we obtain the claim.

  \bigskip

  To show \eqref{eq:containment-U-tensor-Lambda^3Udual} one can tensor the exact sequence
  \eqref{eq:tautological-exact-sequence} by $\wedge^3\cU^{\vee}$ to get
  \begin{equation*}
    0 \to \cU \otimes \wedge^3\cU^{\vee} \to V \otimes \wedge^3\cU^{\vee} \to
    \cU^\vee \otimes \wedge^3\cU^{\vee} \to 0.
  \end{equation*}
  Now we twist by $\cO(t)$ with $t \in [2,7]$ and use
  \eqref{eq:containment-Udual-tensor-Lambda^3Udual} and
  \eqref{eq:containment-exterior-powers-Udual}.

  \bigskip

  To show \eqref{eq:containment-Udual-tensor-Lambda^3U} we use the inclusion
  \eqref{eq:containment-Udual-tensor-Lambda^3Udual} and the isomorphism
  $\wedge^3\cU^{\vee} = \wedge^3\cU(2)$.
\end{proof}

\begin{lemma}
  We have
  \begin{align}\label{eq:containment-Lambda^2U-tensor-Lambda^2Udual}
    \wedge^2\cU \otimes \wedge^2\cU^{\vee} (t) &\in \cD && \text{for} \quad t \in [4,7], \\
\label{eq:containment-Lambda^2Udual-tensor-Lambda^2Udual}
    \wedge^2\cU^{\vee} \otimes \wedge^2 \cU^\vee(t) &\in \cD && \text{for} \quad t \in [2,5].
  \end{align}
\end{lemma}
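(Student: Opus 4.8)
The plan is to obtain both containments by the device used repeatedly in this section: exhibit each bundle as a graded piece of a filtered bundle whose total object and whose other graded pieces already lie in $\cD$, and then invoke that $\cD$, being triangulated, is closed under cones and shifts, so that the remaining graded piece must lie in $\cD$ as well. Concretely this rests on the elementary fact that if a bundle carries a filtration whose associated graded lies in $\cD$ in every spot but one, and whose total object lies in $\cD$, then the last graded piece lies in $\cD$ too (split the filtration into short exact sequences and induct from both ends).

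For \eqref{eq:containment-Lambda^2U-tensor-Lambda^2Udual}, I would take fourth exterior powers of the two-step filtration of $V\otimes\cO$ with graded pieces $\cU$ and $\cU^\vee$ afforded by \eqref{eq:tautological-exact-sequence}; this gives the trivial bundle $\wedge^4 V\otimes\cO$ a filtration with graded pieces $\wedge^i\cU\otimes\wedge^{4-i}\cU^\vee$ for $i=0,\dots,4$. Twisting by $\cO(t)$ with $t\in[4,7]$, the total object is in $\cD$ by \eqref{eq:containment-O}, and, using $\wedge^3\cU\simeq\wedge^3\cU^\vee(-2)$ and $\wedge^4\cU\simeq\wedge^2\cU^\vee(-2)$, one checks that every graded piece except $\wedge^2\cU\otimes\wedge^2\cU^\vee(t)$ already lies in $\cD$ for $t$ in this range: $\wedge^4\cU^\vee(t)$ and $\wedge^4\cU(t)$ by \eqref{eq:containment-exterior-powers-Udual}, $\cU\otimes\wedge^3\cU^\vee(t)$ by \eqref{eq:containment-U-tensor-Lambda^3Udual}, and $\wedge^3\cU\otimes\cU^\vee(t)\simeq\cU^\vee\otimes\wedge^3\cU(t)$ by \eqref{eq:containment-Udual-tensor-Lambda^3U}. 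Hence $\wedge^2\cU\otimes\wedge^2\cU^\vee(t)\in\cD$ for $t\in[4,7]$.

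For \eqref{eq:containment-Lambda^2Udual-tensor-Lambda^2Udual}, I would then tensor the filtration of $\rV^{\omega_6}\otimes\cO$ from Lemma \ref{lemma:filtrations-spinor-representations}(ii), whose graded pieces are $\wedge^{2i}\cU^\vee(-1)$ for $i\in[0,3]$, by $\wedge^2\cU^\vee$; the trivial bundle $\rV^{\omega_6}\otimes\wedge^2\cU^\vee$ thereby acquires a filtration with graded pieces $\wedge^2\cU^\vee(-1)$, $\wedge^2\cU^\vee\otimes\wedge^2\cU^\vee(-1)$, $\wedge^4\cU^\vee\otimes\wedge^2\cU^\vee(-1)$ and $\wedge^2\cU^\vee(1)$, the last one via $\wedge^6\cU^\vee\simeq\cO(2)$. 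Twisting by $\cO(t)$ with $t\in[3,6]$: the total object and the first and last graded pieces lie in $\cD$ by \eqref{eq:containment-Lambda^2Udual}, while the third piece equals $\wedge^2\cU\otimes\wedge^2\cU^\vee(t+1)$ under $\wedge^4\cU^\vee\simeq\wedge^2\cU(2)$, hence lies in $\cD$ for $t\in[3,6]$ by the case just established. Therefore the second graded piece $\wedge^2\cU^\vee\otimes\wedge^2\cU^\vee(t-1)$ lies in $\cD$ for $t\in[3,6]$, i.e.\ $\wedge^2\cU^\vee\otimes\wedge^2\cU^\vee(s)\in\cD$ for $s\in[2,5]$.

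I do not anticipate a genuine obstacle: once the two filtrations are chosen the argument is purely formal, and the only point requiring care is the bookkeeping of twist ranges, so that each auxiliary containment is invoked within its valid interval. The one structural constraint is that \eqref{eq:containment-Lambda^2U-tensor-Lambda^2Udual} must be proved before \eqref{eq:containment-Lambda^2Udual-tensor-Lambda^2Udual}, since the filtration argument for the latter uses the former as input.
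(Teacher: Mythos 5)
Your proof is correct and takes essentially the same route as the paper: for \eqref{eq:containment-Lambda^2U-tensor-Lambda^2Udual} you use the filtration of $\wedge^4 V\otimes\cO$ coming from the tautological sequence (the paper writes its first graded piece as $\wedge^2\cU^\vee(-2)$, which is your $\wedge^4\cU$), and for \eqref{eq:containment-Lambda^2Udual-tensor-Lambda^2Udual} you tensor the four-step filtration of $\rV^{\omega_6}\otimes\cO$ by $\wedge^2\cU^\vee$ and twist by $\cO(t)$ with $t\in[3,6]$, exactly as the paper does. The twist-range bookkeeping and the auxiliary containments you invoke all match.
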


\begin{proof}
  To show \eqref{eq:containment-Lambda^2U-tensor-Lambda^2Udual} we consider the filtration
  \begin{equation*}
    \wedge^4 V \otimes \cO = [
    \wedge^2\cU^\vee(-2), \, \wedge^3\cU \otimes \cU^\vee, \,
    \wedge^2\cU \otimes \wedge^2\cU^\vee, \, \cU \otimes \wedge^3\cU^\vee, \,
    \wedge^4\cU^\vee ].
  \end{equation*}
  Twisting by $\cO(t)$ with $t \in [4,7]$ and using
  \eqref{eq:containment-O},
  \eqref{eq:containment-exterior-powers-Udual},
  \eqref{eq:containment-U-tensor-Lambda^3Udual},
  \eqref{eq:containment-Udual-tensor-Lambda^3U}
  we get the claim.

  \medskip

  To show \eqref{eq:containment-Lambda^2Udual-tensor-Lambda^2Udual} we consider
  the filtration of Lemma \ref{lemma:filtrations-spinor-representations}
  \begin{equation*}
    V^{\omega_6} \otimes \cO = [\cO(-1), \, \wedge^2\cU^{\vee}(-1), \,
    \wedge^2\cU(1), \, \cO(1)],
  \end{equation*}
  where we have used the isomorphisms $\wedge^4 \cU^\vee(-1) = \wedge^2 \cU(1)$
  and $\wedge^6 \cU^\vee(-1) = \cO(1)$.
  Tensoring it by $\wedge^2 \cU^\vee$ we get
  \begin{equation*}
    V^{\omega_6} \otimes \wedge^2 \cU^\vee = [
    \wedge^2 \cU^\vee (-1), \, \wedge^2\cU^{\vee} \otimes \wedge^2 \cU^\vee (-1), \,
    \wedge^2\cU \otimes \wedge^2 \cU^\vee (1), \, \wedge^2 \cU^\vee (1)].
  \end{equation*}
  Twisting it by $\cO(t)$ with $t \in [3,6]$ and using
  \eqref{eq:containment-exterior-powers-Udual},
  \eqref{eq:containment-Lambda^2U-tensor-Lambda^2Udual}
  we obtain the claim.
\end{proof}

\begin{lemma}
  We have
  %
  %
  \begin{align}
    \label{eq:containment-Sigma31-Udual}
    & \Sigma^{3,1} \cU^\vee(t)  \in \cD && \text{for} \quad t \in [2,9], \\
    \label{eq:containment-Sigma211-Udual}
    & \Sigma^{2,1,1} \cU^\vee(t)  \in \cD && \text{for} \quad t \in [2,7], \\
    \label{eq:containment-Sigma22-Udual}
    & \Sigma^{2,2} \cU^\vee(t)  \in \cD && \text{for} \quad t \in [2,7].
  \end{align}
\end{lemma}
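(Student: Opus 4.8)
\emph{Plan.} The strategy is the one used throughout this section: realise each of $\Sigma^{2,1,1}\cU^\vee$, $\Sigma^{2,2}\cU^\vee$, $\Sigma^{3,1}\cU^\vee$ as a direct summand of a tensor product of bundles which are already known to lie in $\cD$ over a suitable range of twists, using on the one hand the Littlewood--Richardson decompositions of the relevant tensor products of Schur functors of the rank-$6$ bundle $\cU^\vee$, and on the other hand the exact sequences obtained from the tautological sequence \eqref{eq:tautological-exact-sequence} and from the spinor filtrations of Lemma \ref{lemma:filtrations-spinor-representations}. Since the three bundles feed on one another, I would establish them in the order $\Sigma^{2,1,1}\cU^\vee$, then $\Sigma^{2,2}\cU^\vee$, then $\Sigma^{3,1}\cU^\vee$.

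For $\Sigma^{2,1,1}\cU^\vee$ I would use the splitting $\wedge^3\cU^\vee\otimes\cU^\vee\simeq\wedge^4\cU^\vee\oplus\Sigma^{2,1,1}\cU^\vee$: by \eqref{eq:containment-Udual-tensor-Lambda^3Udual} we have $\wedge^3\cU^\vee\otimes\cU^\vee(t)\in\cD$ for $t\in[2,7]$ and $\wedge^4\cU^\vee(t)\in\cD$ for $t\in[0,7]$ by \eqref{eq:containment-exterior-powers-Udual}, so removing the second summand yields \eqref{eq:containment-Sigma211-Udual}. For $\Sigma^{2,2}\cU^\vee$ the natural source is $\wedge^2\cU^\vee\otimes\wedge^2\cU^\vee\simeq\wedge^4\cU^\vee\oplus\Sigma^{2,1,1}\cU^\vee\oplus\Sigma^{2,2}\cU^\vee$ (equivalently $S^2(\wedge^2\cU^\vee)\simeq\Sigma^{2,2}\cU^\vee\oplus\wedge^4\cU^\vee$); combined with \eqref{eq:containment-Lambda^2Udual-tensor-Lambda^2Udual} this gives only the range $t\in[2,5]$, so to reach $t\in[6,7]$ I would tensor the spinor filtration of $V^{\omega_6}\otimes\cO$ from Lemma \ref{lemma:filtrations-spinor-representations} by an appropriate Schur bundle (as in the proof of \eqref{eq:containment-Lambda^2Udual-tensor-Lambda^2Udual}), arranged so that a high twist $\wedge^2\cU^\vee\otimes\wedge^2\cU^\vee(t)$ with $t\in\{6,7\}$ occurs together with factors already in $\cD$, and then peel off $\Sigma^{2,2}\cU^\vee$ using \eqref{eq:containment-exterior-powers-Udual} and \eqref{eq:containment-Sigma211-Udual}.

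For $\Sigma^{3,1}\cU^\vee$, which requires the widest range $[2,9]$, the symmetric powers are the natural input. From the resolution of $S^3\cU^\vee$ used to prove \eqref{eq:containment-S^nUdual}, tensored by $\cU^\vee$ and combined with \eqref{eq:containment-Udual-tensor-Lambda^3U}, \eqref{eq:containment-Udual-tensor-Udual} and the known symmetric-power containments, I would obtain $S^3\cU^\vee\otimes\cU^\vee(t)\in\cD$ in an upper range; then, since $S^3\cU^\vee\otimes\cU^\vee\simeq S^4\cU^\vee\oplus\Sigma^{3,1}\cU^\vee$ and $S^4\cU^\vee(t)\in\cD$ for $t\in[2,9]$ by \eqref{eq:containment-S^nUdual}, this gives $\Sigma^{3,1}\cU^\vee(t)\in\cD$ there. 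The remaining low twists would be handled separately, either through $\cU^\vee\otimes\Sigma^{2,1}\cU^\vee\simeq\Sigma^{3,1}\cU^\vee\oplus\Sigma^{2,2}\cU^\vee\oplus\Sigma^{2,1,1}\cU^\vee$ once the other two summands are available in the needed range (using \eqref{eq:containment-Sigma21-Udual} and the tautological/spinor sequences to place $\cU^\vee\otimes\Sigma^{2,1}\cU^\vee$ in $\cD$ at those twists), or through $S^2\cU^\vee\otimes S^2\cU^\vee\simeq S^4\cU^\vee\oplus\Sigma^{3,1}\cU^\vee\oplus\Sigma^{2,2}\cU^\vee$.

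\emph{Main obstacle.} No single decomposition yields the full stated range for any one of the three bundles: the tautological tensor products only cover the middle of the range, while the endpoints have to be reached via the spinor filtrations of Lemma \ref{lemma:filtrations-spinor-representations}, and the three arguments are mutually dependent. The real content is therefore the interlocking range bookkeeping --- choosing, for each twist near an endpoint, a resolution all of whose terms are already in $\cD$ at that twist --- rather than any single computation, each of which is routine Littlewood--Richardson calculus.
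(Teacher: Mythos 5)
Your plan matches the paper's proof for \eqref{eq:containment-Sigma211-Udual}, for the preliminary range $[2,5]$ of \eqref{eq:containment-Sigma22-Udual}, and for the range $[4,9]$ of \eqref{eq:containment-Sigma31-Udual} (the paper uses exactly the Koszul-type resolution of $S^3\cU^\vee$ twisted by $\cO(2)$ and tensored by $\cU^\vee$, together with $S^2\cU^\vee\otimes\cU^\vee\simeq S^3\cU^\vee\oplus\Sigma^{2,1}\cU^\vee$ and $S^3\cU^\vee\otimes\cU^\vee\simeq S^4\cU^\vee\oplus\Sigma^{3,1}\cU^\vee$). Where your proposal has a genuine gap is exactly at the endpoints that you flag as an obstacle: $\Sigma^{2,2}\cU^\vee(t)$ for $t\in\{6,7\}$ and $\Sigma^{3,1}\cU^\vee(t)$ for $t\in\{2,3\}$. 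Tensoring the spinor filtration of $V^{\omega_6}\otimes\cO$ by a Schur bundle cannot close this gap: the constraint on $\wedge^2\cU^\vee\otimes\wedge^2\cU^\vee(t)$ in the proof of \eqref{eq:containment-Lambda^2Udual-tensor-Lambda^2Udual} traces back to $\wedge^2\cU\otimes\wedge^2\cU^\vee(t)$ being known only for $t\in[4,7]$, which is itself limited by $\cU\otimes\wedge^3\cU^\vee$ and $\cU^\vee\otimes\wedge^3\cU$; re-tensoring these filtrations simply pushes the same bottleneck around. Likewise the decomposition $\cU^\vee\otimes\Sigma^{2,1}\cU^\vee\simeq\Sigma^{3,1}\cU^\vee\oplus\Sigma^{2,2}\cU^\vee\oplus\Sigma^{2,1,1}\cU^\vee$ is circular as stated, because you have not exhibited a resolution of $\cU^\vee\otimes\Sigma^{2,1}\cU^\vee(t)$ all of whose other terms are already in $\cD$ at those twists.

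The paper's way out is the new ingredient from Section \ref{section:dissecting} that your proposal does not invoke: the $\rG$-equivariant exact complex of Proposition \ref{prop_complex_6} (recalled as \eqref{eq:mutation-object-Q-recall}), $0\to Q(-2)\to\cU^\vee(-1)\otimes S^+\to\wedge^3 V\otimes\cO\to T\to\Sigma^{2,1}\cU^\vee\to 0$, together with the semisimple reductions $\ss(P\otimes\cU^\vee)=\cU^\vee\oplus\wedge^3\cU^\vee\oplus\Sigma^{2,1}\cU^\vee$ and $\ss(Q\otimes\cU^\vee)=\Sigma^{3,1}\cU^\vee\oplus\Sigma^{2,2}\cU^\vee\oplus\Sigma^{2,1,1}\cU^\vee\oplus S^2\cU^\vee\oplus\wedge^2\cU^\vee$. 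From the second reduction one first gets $Q\otimes\cU^\vee(t)\in\cD$ on a small range $[4,5]$ (all five factors already known there); then tensoring \eqref{eq:mutation-object-Q-recall} by $\cU^\vee(t)$ and shifting back and forth propagates $Q\otimes\cU^\vee(t)\in\cD$ to $t\in[1,7]$. Since four of the five summands of $\ss(Q\otimes\cU^\vee)$ are already known at $t\in\{6,7\}$ (resp.\ $t\in\{2,3\}$), this forces the fifth one, namely $\Sigma^{2,2}\cU^\vee(t)$ (resp.\ $\Sigma^{3,1}\cU^\vee(t)$), into $\cD$, completing the proof. Without appealing to this complex --- whose construction is one of the main technical points of the paper --- the interlocking bookkeeping you describe does not terminate, so your sketch as written would not yield the stated ranges.
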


\begin{proof}
  Our first step is to note that by the Littlewood-Richardson rule we have
  \begin{equation*}
    \cU^\vee \otimes \wedge^3 \cU^\vee = \Sigma^{2,1,1} \cU^\vee \oplus \wedge^4 \cU^\vee,
  \end{equation*}
  \begin{equation*}
    \wedge^2 \cU^\vee \otimes \wedge^2 \cU^\vee = \Sigma^{2,1,1} \cU^\vee \oplus
    \Sigma^{2,2} \cU^\vee \oplus \wedge^4 \cU^\vee.
  \end{equation*}
  Hence, by \eqref{eq:containment-exterior-powers-Udual} inclusions
  \eqref{eq:containment-Udual-tensor-Lambda^3Udual} and
  \eqref{eq:containment-Lambda^2Udual-tensor-Lambda^2Udual} immediately imply
  \begin{equation}\label{eq:prelim-containment-Sigma211-Udual}
    \Sigma^{2,1,1} \cU^\vee(t) \in \cD \quad \text{for} \quad t \in [2,7],
  \end{equation}
  \begin{equation}\label{eq:prelim-containment-Sigma22-Udual}
    \Sigma^{2,2} \cU^\vee(t) \in \cD \quad \text{for} \quad t \in [2,5].
  \end{equation}
  This proves \eqref{eq:containment-Sigma211-Udual}, but it is not quite enough
  to prove \eqref{eq:containment-Sigma22-Udual}.

  \bigskip

  Our second step is to deal with $\Sigma^{3,1} \cU^\vee$. Let us consider the exact
  sequence
  \begin{equation*}
    0 \to \wedge^3 \cU \to \wedge^3 V \otimes \cO \to \wedge^2 V \otimes \cU^\vee
    \to V \otimes S^2 \cU^\vee \to S^3 \cU^\vee \to 0
  \end{equation*}
  obtained from \eqref{eq:tautological-exact-sequence}. After twisting by
  $\cO(2)$ and using the isomorphism $\wedge^3 \cU = \wedge^3 \cU^\vee(-2)$
  we rewrite the above sequence as
  \begin{equation*}
    0 \to \wedge^3 \cU^\vee \to \wedge^3 V \otimes \cO(2) \to \wedge^2 V \otimes \cU^\vee(2)
    \to V \otimes S^2 \cU^\vee(2) \to S^3 \cU^\vee(2) \to 0.
  \end{equation*}
  Tensoring it by $\cU^\vee$ we obtain the exact sequence
  \begin{multline*}
  0 \to \wedge^3 \cU^\vee \otimes \cU^\vee \to \wedge^3 V \otimes \cU^\vee(2)
  \to \wedge^2 V \otimes \cU^\vee \otimes \cU^\vee(2) \to \\
  \to V \otimes S^2 \cU^\vee \otimes \cU^\vee(2) \to S^3 \cU^\vee \otimes \cU^\vee(2) \to 0.
  \end{multline*}
  Now we note that by the Littlewood-Richardson rule we have
  \begin{equation*}
    S^2 \cU^\vee \otimes \cU^\vee = S^3 \cU^\vee \oplus \Sigma^{2,1} \cU^\vee
    \quad \text{and} \quad
    S^3 \cU^\vee \otimes \cU^\vee = S^4 \cU^\vee \oplus \Sigma^{3,1} \cU^\vee
  \end{equation*}
  Therefore, tensoring the above sequence by $\cO(t)$ with $t \in [2,7]$ and using
  \eqref{eq:containment-Udual-tensor-Lambda^3Udual},
  \eqref{eq:containment-Udual},
  \eqref{eq:containment-Udual-tensor-Udual},
  \eqref{eq:containment-S^nUdual},
  \eqref{eq:containment-Sigma21-Udual},
  we conclude
  \begin{equation}\label{eq:prelim-containment-Sigma31-Udual}
    \Sigma^{3,1} \cU^\vee(t) \in \cD \quad \text{for} \quad t \in [4,9].
  \end{equation}
  It is not quite enough for \eqref{eq:containment-Sigma31-Udual}, but we are going
  to fix this soon.

  \bigskip

  As third step we proceed as follows.
  Recall again the exact sequence from Proposition \ref{prop_complex_6} and Remark \ref{rem_R6}
  \begin{equation}\label{eq:mutation-object-Q-recall}
   0\to Q(-2) \to \cU^\vee(-1)\otimes S^+ \to \wedge^3 V\otimes \cO \to T \to \Sigma^{2,1}\cU^\vee \to 0 .
  \end{equation}
  Using the Littlewood-Richardson rule and the definitions of $P$ and $Q$ we have
  \begin{equation*}
    \ss(P \otimes \cU^\vee) = \cU^\vee \oplus \wedge^3 \cU^\vee \oplus \Sigma^{2,1} \cU^\vee
  \end{equation*}
  and
  \begin{equation*}
    \ss(Q \otimes \cU^\vee) = \Sigma^{3,1} \cU^\vee \oplus \Sigma^{2,2} \cU^\vee
    \oplus \Sigma^{2,1,1} \cU^\vee \oplus S^2 \cU^\vee \oplus \wedge^2 \cU^\vee.
  \end{equation*}
  Hence, for $P \otimes \cU^\vee$ we have
  \begin{equation}\label{eq:prelim-containment-P-tensor-Udual}
    P \otimes \cU^\vee(t) \in \cD \quad \text{for} \quad t \in [0,7],
  \end{equation}
  as each individual factor is contained in $\cD$ by \eqref{eq:containment-Sigma21-Udual}
  and \eqref{eq:containment-exterior-powers-Udual}.

  In the same way we have
  \begin{equation}\label{eq:prelim-containment-Q-tensor-Udual}
    Q \otimes \cU^\vee(t) \in \cD \quad \text{for} \quad t \in [4,5],
  \end{equation}
  as each individual factor is contained in $\cD$ by
  \eqref{eq:containment-exterior-powers-Udual},
  \eqref{eq:containment-S^nUdual},
  \eqref{eq:prelim-containment-Sigma211-Udual},
  \eqref{eq:prelim-containment-Sigma22-Udual},
  \eqref{eq:prelim-containment-Sigma31-Udual}.

  \smallskip

  Tensoring \eqref{eq:mutation-object-Q-recall} by $\cU^\vee(t)$ with $t \in [6,7]$
  and using \eqref{eq:prelim-containment-P-tensor-Udual},
  \eqref{eq:prelim-containment-Q-tensor-Udual},
  \eqref{eq:containment-Udual},
  \eqref{eq:containment-Udual-tensor-Udual}, \eqref{eq:containment-Udual-tensor-Lambda^2Udual}
  we conclude that $Q \otimes \cU^\vee(t) \in \cD$ for $t \in [6,7]$.
  Similarly, tensoring \eqref{eq:mutation-object-Q-recall} by $\cU^\vee(t)$ with $t \in [4,5]$,
  we conclude $Q \otimes \cU^\vee(t) \in \cD$ for $t \in [2,3]$.
  Finally, tensoring \eqref{eq:mutation-object-Q-recall} by $\cU^\vee(3)$, we obtain
  $Q \otimes \cU^\vee(1) \in \cD$. Thus, we have shown $Q \otimes \cU^\vee(t) \in \cD \quad \text{for} \quad t \in [1,7]$. This inclusion, together with
  \eqref{eq:containment-S^nUdual},
  \eqref{eq:containment-exterior-powers-Udual},
  \eqref{eq:prelim-containment-Sigma211-Udual},
  \eqref{eq:prelim-containment-Sigma22-Udual},
  \eqref{eq:prelim-containment-Sigma31-Udual}
  allows to conclude first that $\Sigma^{2,2} \cU^\vee(t) \in \cD$ for $t \in [6,7]$,
  as all the other factors are already contained in $\cD$ with these twists.
  Then, similarly, we conclude that $\Sigma^{3,1} \cU^\vee(t) \in \cD$ for $t \in [2,3]$.
\end{proof}

\begin{corollary}
  We have
\begin{align}
    \label{eq:stronger-containment-Udual-tensor-Lambda^3Udual}
    &\wedge^3 \cU^\vee \otimes \cU^\vee(t) \in \cD && \text{for} \quad t \in [0,7],\\
  &\label{eq:stronger-containment-Sigma211-Udual}
    \Sigma^{2,1,1} \cU^\vee(t) \in \cD && \text{for} \quad t \in [0,7],\\
  &\label{eq:stronger-containment-Udual-tensor-Udual}
    \cU^\vee \otimes \cU^\vee(t) \in \cD && \text{for} \quad t \in [0,9].
\end{align}
\end{corollary}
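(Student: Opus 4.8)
My plan is to extract the three inclusions simultaneously and then to isolate the only two twists that are not already covered. Throughout I use that a full triangulated subcategory is stable under shifts and cones, so that for a bundle carrying a finite filtration, if the total space and all of the graded pieces but one lie in $\cD$, then so does the remaining piece.

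First, by Pieri's rule $\cU^\vee\otimes\cU^\vee=S^2\cU^\vee\oplus\wedge^2\cU^\vee$ and $\cU^\vee\otimes\wedge^3\cU^\vee=\Sigma^{2,1,1}\cU^\vee\oplus\wedge^4\cU^\vee$. Since $\wedge^2\cU^\vee(t)\in\cD$ for $t\in[0,9]$ and $\wedge^4\cU^\vee(t)\in\cD$ for $t\in[0,7]$ by \eqref{eq:containment-exterior-powers-Udual}, inclusion \eqref{eq:stronger-containment-Sigma211-Udual} is equivalent to \eqref{eq:stronger-containment-Udual-tensor-Lambda^3Udual}, and \eqref{eq:stronger-containment-Udual-tensor-Udual} is equivalent to $S^2\cU^\vee(t)\in\cD$ for $t\in[0,9]$; in view of the ranges already obtained in \eqref{eq:containment-Udual-tensor-Udual} and \eqref{eq:containment-Udual-tensor-Lambda^3Udual}, only the twists $t=0$ and $t=1$ remain. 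Moreover, tensoring the filtration $V^{\omega_5}\otimes\cO=[\cU^\vee(-1),\wedge^3\cU^\vee(-1),\cU(1)]$ of Lemma \ref{lemma:filtrations-spinor-representations} by $\cU^\vee$ and twisting by $\cO(t+1)$ exhibits $\cU^\vee(t+1)^{\oplus 32}$ with graded pieces $\cU^\vee\otimes\cU^\vee(t)$, $\wedge^3\cU^\vee\otimes\cU^\vee(t)$ and $\cU\otimes\cU^\vee(t+2)$; using \eqref{eq:containment-Udual} for the total space and \eqref{eq:containment-U-tensor-Udual} for the last graded piece, I obtain that $\cU^\vee\otimes\cU^\vee(t)\in\cD$ forces $\wedge^3\cU^\vee\otimes\cU^\vee(t)\in\cD$ for $t\in[0,7]$. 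Hence everything boils down to showing $S^2\cU^\vee(0),S^2\cU^\vee(1)\in\cD$.

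I then rephrase this last statement twice. From \eqref{eq:tautological-exact-sequence} one gets the exact complex $0\to\wedge^2\cU\to\wedge^2V\otimes\cO\to V\otimes\cU^\vee\to S^2\cU^\vee\to0$; twisting it by $\cO(t)$ with $t\in\{0,1\}$ and using \eqref{eq:containment-O} and \eqref{eq:containment-Udual}, it suffices to prove $\wedge^2\cU(0),\wedge^2\cU(1)\in\cD$. Using the filtration $\wedge^2V\otimes\cO=[\wedge^2\cU,\cU\otimes\cU^\vee,\wedge^2\cU^\vee]$ by the nilradical, the Levi part and the tangent bundle of $X$, together with $\cU\otimes\cU^\vee=\cO\oplus\mathfrak{sl}(\cU)$ and \eqref{eq:containment-O}, \eqref{eq:containment-Lambda^2Udual}, this is in turn equivalent to $\mathfrak{sl}(\cU)(0),\mathfrak{sl}(\cU)(1)\in\cD$.

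The remaining and decisive point is that this inclusion is entangled with the objects $Q\otimes\cU$, $Q\otimes\cU^\vee$ and with the Schur bundles $\Sigma^{3,1}\cU^\vee$, $\Sigma^{2,2}\cU^\vee$, $\Sigma^{2,1,1}\cU^\vee$ at the twists $t=0,1$, and the naive short exact sequences between these objects run in circles. The way to break the loop is the five term exact complex of Proposition \ref{prop_complex_6} and Remark \ref{rem_R6},
\[
0\to Q(-2)\to\cU^\vee(-1)\otimes S^+\to\wedge^3V\otimes\cO\to T\to\Sigma^{2,1}\cU^\vee\to0,\qquad 0\to\cU^\vee\to V\otimes\wedge^2\cU^\vee\to T\to 0,
\]
which, being exact and $\Spin_{12}$-equivariant, allows one to shift a containment by one unit of twist. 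Concretely, twisting the complex by $\cO(s)$ and using \eqref{eq:containment-Sigma21-Udual} one first reads off $Q(s-2)\in\cD$ for $s\in[1,9]$, i.e. $Q(t)\in\cD$ for $t\in[-1,7]$; feeding this, together with the already established $Q\otimes\cU^\vee(t)\in\cD$ for $t\in[1,7]$ and the Schur bundle ranges of the preceding lemma, into the sequences obtained by tensoring the complex (and the defining sequence of $T$) by $\cU$ and by $\cU^\vee$ and twisting down to $t\in\{0,1\}$, one closes the loop and places $\mathfrak{sl}(\cU)(0)$ and $\mathfrak{sl}(\cU)(1)$ in $\cD$. Propagating this back through the two previous reductions yields \eqref{eq:stronger-containment-Udual-tensor-Lambda^3Udual}--\eqref{eq:stronger-containment-Udual-tensor-Udual}. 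The hard part is exactly this last step: all the elementary exact sequences one could write down are circular, and one must insert the complex of Proposition \ref{prop_complex_6} at precisely the right twist to break the circularity.
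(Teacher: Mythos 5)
The first half of your proof is correct: the Pieri decompositions and the filtration coming from $V^{\omega_5}\otimes\cU^\vee$ do reduce all three inclusions to the two twists $S^2\cU^\vee(0),\,S^2\cU^\vee(1)\in\cD$, which you then correctly translate into $\mathfrak{sl}(\cU)(0),\,\mathfrak{sl}(\cU)(1)\in\cD$. But the final paragraph — the step you yourself identify as ``the hard part'' — is not a proof. You assert that twisting the complex of Proposition~\ref{prop_complex_6} by $\cU$ or $\cU^\vee$ and ``closing the loop'' places $\mathfrak{sl}(\cU)(0)$ and $\mathfrak{sl}(\cU)(1)$ in $\cD$, but you never write down which sequence, at which twist, isolates which graded piece. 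When one actually attempts it, the obstacle you anticipated reappears: twisting the complex by $\cU^\vee(2)$ leaves two unknown terms, $Q\otimes\cU^\vee(0)$ and $\cU^\vee\otimes\cU^\vee(1)\otimes S^+$, and a single exact sequence cannot determine both; and $Q\otimes\cU^\vee(t)$ at $t\in\{0,1\}$ has semisimplification $\Sigma^{3,1}\cU^\vee\oplus\Sigma^{2,2}\cU^\vee\oplus\Sigma^{2,1,1}\cU^\vee\oplus S^2\cU^\vee\oplus\wedge^2\cU^\vee$, of which four of five factors are precisely the unknowns at those twists. So the sequences you invoke genuinely do run in circles at $t=0,1$; the complex of Proposition~\ref{prop_complex_6} does not by itself break that circularity at these twists.

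The missing ingredient is that the circle should be broken through \eqref{eq:stronger-containment-Udual-tensor-Lambda^3Udual}, not through \eqref{eq:stronger-containment-Udual-tensor-Udual}. The paper re-uses the five-term exact sequence
\[
0 \to \wedge^3 \cU^\vee \otimes \cU^\vee \to \wedge^3 V \otimes \cU^\vee(2) \to \wedge^2 V \otimes \cU^\vee \otimes \cU^\vee(2) \to V \otimes S^2 \cU^\vee \otimes \cU^\vee(2) \to S^3 \cU^\vee \otimes \cU^\vee(2) \to 0
\]
that already appeared in the proof of the preceding lemma. Twisting it by $\cO(t)$ with $t\in[0,1]$, every term except the leftmost is built from $\cU^\vee$, $\cU^\vee\otimes\cU^\vee$, $S^3\cU^\vee\oplus\Sigma^{2,1}\cU^\vee$ and $S^4\cU^\vee\oplus\Sigma^{3,1}\cU^\vee$ twisted by $\cO(t+2)$ with $t+2\in[2,3]$, and all of these are already in $\cD$ by the ranges established \emph{before} the corollary (in particular the range $\Sigma^{3,1}\cU^\vee(t)\in\cD$ for $t\in[2,9]$ of the previous lemma is exactly what is needed). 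This gives \eqref{eq:stronger-containment-Udual-tensor-Lambda^3Udual} directly at $t\in[0,1]$. Then \eqref{eq:stronger-containment-Sigma211-Udual} follows from Pieri, and \eqref{eq:stronger-containment-Udual-tensor-Udual} follows from your own filtration argument run in the opposite direction, using $\wedge^3\cU^\vee\otimes\cU^\vee$ as the known piece rather than the unknown one. Your proof has the causal chain reversed: you try to obtain $\cU^\vee\otimes\cU^\vee$ first and deduce $\wedge^3\cU^\vee\otimes\cU^\vee$ second, whereas only the other order actually terminates.
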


\begin{proof}
  Let us consider again the exact sequence
  \begin{multline*}
  0 \to \wedge^3 \cU^\vee \otimes \cU^\vee \to \wedge^3 V \otimes \cU^\vee(2)
  \to \wedge^2 V \otimes \cU^\vee \otimes \cU^\vee(2) \to \\
  \to V \otimes S^2 \cU^\vee \otimes \cU^\vee(2) \to S^3 \cU^\vee \otimes \cU^\vee(2) \to 0.
  \end{multline*}
  as in the proof of the previous lemma. From the previous lemma,
  \eqref{eq:containment-exterior-powers-Udual},
  \eqref{eq:containment-S^nUdual},
  \eqref{eq:containment-Udual-tensor-Udual},
  \eqref{eq:containment-Sigma21-Udual},
  we know that all its terms except for $\wedge^3 \cU^\vee \otimes \cU^\vee$ are
  contained in $\cD$ with twists in $[0,7]$. Hence, the same holds for $\wedge^3 \cU^\vee \otimes \cU^\vee$.

  \medskip

  The inclusion \eqref{eq:stronger-containment-Sigma211-Udual} follows from
  \begin{equation*}
    \cU^\vee \otimes \wedge^3 \cU^\vee = \Sigma^{2,1,1} \cU^\vee \oplus \wedge^4 \cU^\vee,
  \end{equation*}
  combined with \eqref{eq:stronger-containment-Udual-tensor-Lambda^3Udual} and
  \eqref{eq:containment-exterior-powers-Udual}.

  \medskip

  Finally, we show \eqref{eq:stronger-containment-Udual-tensor-Udual}.
  Let us consider the filtration
  \begin{equation*}
    V^{\omega_5} \otimes \cU^\vee = [\cU^{\vee} \otimes \cU^\vee (-1), \,
    \wedge^3\cU^{\vee} \otimes \cU^\vee (-1), \, \cU \otimes \cU^\vee (1)]
  \end{equation*}
  provided by Lemma \ref{lemma:filtrations-spinor-representations}.
  Twisting it by $\cO(t)$ with $t \in [1,2]$ and using
  \eqref{eq:stronger-containment-Udual-tensor-Lambda^3Udual},
  \eqref{eq:containment-U-tensor-Udual},
  \eqref{eq:containment-Udual}
  we obtain $\cU^{\vee} \otimes \cU^\vee(t) \in \cD$ for $t \in [0,1]$.
  Combining this with \eqref{eq:containment-Udual-tensor-Udual} we get the claim.
\end{proof}

\begin{lemma}\label{lemma:auxiliary-2}
  For any $j \in [0,5]$ we have
  \begin{equation}\label{eq:containment-for-fullness}
    \cU^\vee \otimes \wedge^j \cE^\vee(t) \in \cD \quad \text{for} \quad t \in [0,7].
  \end{equation}
\end{lemma}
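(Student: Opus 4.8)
The plan is to induct on $j$, feeding on the exterior powers of the tautological sequence \eqref{eq:non-degenerate-subspace-exact-sequence}. Since $\cE^\vee$ is the cokernel of the nowhere-vanishing section $\cO_X \hookrightarrow \cU^\vee$ (a line-bundle sub), for each $j\in[1,5]$ the $j$-th exterior power of \eqref{eq:non-degenerate-subspace-exact-sequence} produces a short exact sequence
\[
0 \to \wedge^{j-1}\cE^\vee \to \wedge^j \cU^\vee \to \wedge^j \cE^\vee \to 0.
\]
Tensoring by $\cU^\vee(t)$ gives
\[
0 \to \cU^\vee\otimes\wedge^{j-1}\cE^\vee(t) \to \cU^\vee\otimes\wedge^j \cU^\vee(t) \to \cU^\vee\otimes\wedge^j \cE^\vee(t) \to 0,
\]
so that $\cU^\vee\otimes\wedge^j\cE^\vee(t)\in\cD$ follows, for a fixed value of $t$, once we know $\cU^\vee\otimes\wedge^{j-1}\cE^\vee(t)\in\cD$ and $\cU^\vee\otimes\wedge^j\cU^\vee(t)\in\cD$. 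The point is that all these memberships are available on the \emph{same} twist range $t\in[0,7]$, so the induction closes cleanly on that range.

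Concretely, the base case $j=0$ is $\cU^\vee(t)\in\cD$ for $t\in[0,7]$, which is part of \eqref{eq:containment-Udual}. For the inductive step I need, for each $j\in[1,5]$ and $t\in[0,7]$, that $\cU^\vee\otimes\wedge^j\cU^\vee(t)\in\cD$; this is exactly what was accumulated above. For $j=1$ it is \eqref{eq:stronger-containment-Udual-tensor-Udual}; for $j=2$ it is part \ref{Ciii} of Corollary \ref{corollary:almost-all-containments}; for $j=3$ it is \eqref{eq:stronger-containment-Udual-tensor-Lambda^3Udual}; for $j=4$ it is part \ref{Civ} of Corollary \ref{corollary:almost-all-containments}; for $j=5$ it is part \ref{Cv} of Corollary \ref{corollary:almost-all-containments}. (For $j=5$ one could alternatively short-circuit the induction using $\wedge^5\cE^\vee\simeq\det\cU^\vee=\cO_X(2)$ together with \eqref{eq:containment-Udual}.)

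I do not expect any genuine obstacle here: the whole preceding sequence of containment lemmas was designed precisely so that all six tensor products $\cU^\vee\otimes\wedge^j\cU^\vee(t)$, $j\in[0,5]$, hold for every $t\in[0,7]$. The only delicate points are $j=1$ and $j=3$, where the first, more naive estimates (part \ref{Cii} of Corollary \ref{corollary:almost-all-containments} and \eqref{eq:containment-Udual-tensor-Lambda^3Udual}) only give $t\ge 2$; it is the strengthened statements \eqref{eq:stronger-containment-Udual-tensor-Udual} and \eqref{eq:stronger-containment-Udual-tensor-Lambda^3Udual} that bring these down to $t\ge 0$ and make the induction work on the nose. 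The remaining verification is pure bookkeeping.
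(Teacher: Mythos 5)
Your proof is correct and follows essentially the same route as the paper's: the paper phrases the argument via the filtration of $\wedge^j\cU^\vee$ with graded pieces $\cO_X,\cE^\vee,\dots,\wedge^j\cE^\vee$, which is nothing but the iterated form of the short exact sequences you write down, and the paper also reduces the lemma to the memberships $\cU^\vee\otimes\wedge^j\cU^\vee(t)\in\cD$ for $t\in[0,7]$, $j\in[0,5]$, citing Corollary \ref{corollary:almost-all-containments} together with \eqref{eq:stronger-containment-Udual-tensor-Lambda^3Udual} and \eqref{eq:stronger-containment-Udual-tensor-Udual}, exactly the references you track down for each $j$.
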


\begin{proof}
  Let us consider the exact sequence
  \begin{equation*}
    0 \to \cO_X \to \cU^\vee \to \cE^\vee \to 0.
  \end{equation*}
  It implies that $\wedge^j \cU^\vee$ has a filtration with factors $\cO_X, \, \cE^\vee, \, \wedge^2 \cE^\vee, \, \dots, \, \wedge^j \cE^\vee$. Therefore, arguing inductively with respect to $j$, if we know
  the inclusions
  \begin{equation}\label{eq:containment-for-fullness-prelim}
    \cU^\vee \otimes \wedge^j \cU^\vee(t) \in \cD \quad \text{for} \quad t \in [0,7],
  \end{equation}
  for all $j \in [0,5]$, then we know \eqref{eq:containment-for-fullness}.
  Now we note that \eqref{eq:containment-for-fullness-prelim} holds by
  Corollary \ref{corollary:almost-all-containments},
  \eqref{eq:stronger-containment-Udual-tensor-Lambda^3Udual} and
  \eqref{eq:stronger-containment-Udual-tensor-Udual}.
\end{proof}

\section{Proof of fullness}

\label{section:fullness}

We are now in position to prove fullness of our Lefschetz exceptional collection. Recall that this is defined in \eqref{eq:collection-d6-p6}.

\medskip

\begin{theorem}
The semiorthogonal exceptional collection appearing in \eqref{eq:collection-d6-p6} is full.
\end{theorem}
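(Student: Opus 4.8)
The plan is to mimic the argument used for the spinor $10$-fold in the proof of Theorem~\ref{theorem:collection-og-5-10}, one step higher: I would invoke the covering family of Lemma~\ref{lemma:auxiliary-1}, whose members are the zero loci $Y_s \simeq \OG(5,10) = Y_{s+} \sqcup Y_{s-}$ of sufficiently general sections $s \in H^0(X, \cE^\vee)$, i.e.\ disjoint unions of two spinor $10$-folds, with inclusion $\is\colon Y_s \hookrightarrow X$. So I take $F \in \cD^\perp$; by Lemma~\ref{lemma:auxiliary-1} it suffices to prove $\is^* F = 0$ for every such $s$.

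Fix one such $s$. Since $Y_s$ has codimension $5$ in $X$ and is cut out by $s$, we have the Koszul resolution $0 \to \wedge^5 \cE \to \wedge^4 \cE \to \cdots \to \cE \to \cO_X \to \is_* \cO_{Y_s} \to 0$. The crucial input is the containment $\cU^\vee \otimes \wedge^j \cE^\vee(t) \in \cD$ for all $j \in [0,5]$ and $t \in [0,7]$, which is exactly Lemma~\ref{lemma:auxiliary-2}: this is where the whole accumulation of Section~\ref{more} gets used, and, through it, the exact complex of Proposition~\ref{prop_complex_6} obtained by dissecting the spin bundles; establishing this containment is the step I expect to be the real obstacle, since one needs a list of homogeneous bundles in $\cD$ long enough — precisely the full twist range $[0,7]$, matching the index $8$ of the spinor $10$-fold — to later resolve $\is_* \cO_{Y_s}$ against all generators of $\Db(Y_{s\pm})$. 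Granting it, orthogonality of $F$ to $\cD$ forces $H^\bullet(X, \cU \otimes \wedge^j \cE(-t) \otimes F) = 0$ for every $j$ and every $t \in [0,7]$; tensoring the Koszul resolution with $\cU(-t) \otimes F$, taking hypercohomology, and using the projection formula, I obtain $H^\bullet(Y_s, \is^* \cU(-t) \otimes \is^* F) = 0$ for $t \in [0,7]$.

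It remains to unwind the restriction. By Lemma~\ref{lemma:auxiliary-1} we have $\is^* \cU^\vee \simeq \cU_5^\vee \oplus \cO_{Y_s}$, so the previous vanishing splits as $\Ext_{Y_s}^\bullet(\cU_5^\vee(t), \is^* F) = \Ext_{Y_s}^\bullet(\cO_{Y_s}(t), \is^* F) = 0$ for $t \in [0,7]$ — in particular the structure-sheaf part comes for free, so no separate discussion of $\wedge^j \cE^\vee(t) \in \cD$ is needed. Restricting to each connected component and using $\cU_5^\vee|_{Y_{s\pm}} = \cU_{5,\pm}^\vee$, each of $\is^* F|_{Y_{s+}}$ and $\is^* F|_{Y_{s-}}$ is right-orthogonal to every object of the full exceptional collection $\langle \cO, \cU_{5,\pm}^\vee, \cO(1), \cU_{5,\pm}^\vee(1), \dots, \cO(7), \cU_{5,\pm}^\vee(7) \rangle$ of Theorem~\ref{theorem:collection-og-5-10}, hence both vanish and $\is^* F = 0$. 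Since $s$ was an arbitrary sufficiently general section, Lemma~\ref{lemma:auxiliary-1} yields $F = 0$; thus $\cD^\perp = 0$ and the collection \eqref{eq:collection-d6-p6} is full.
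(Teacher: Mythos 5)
Your proof is correct and follows essentially the same route as the paper's: restrict along the covering family of $\OG(5,10)$'s from Lemma~\ref{lemma:auxiliary-1}, feed the containments of Lemma~\ref{lemma:auxiliary-2} through the Koszul resolution of $\is_*\cO_{Y_s}$ and the projection formula, split $\is^*\cU^\vee\simeq\cU_5^\vee\oplus\cO_{Y_s}$, and invoke Theorem~\ref{theorem:collection-og-5-10} on each component. Your remark that the $\cO_{Y_s}(t)$-orthogonality ``comes for free'' from the splitting is exactly why the paper only needs the single family $\Sigma=\{\cU^\vee(t)\}_{t\in[0,7]}$.
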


\begin{proof}
  Let us take an object $F \in \cD^\perp$, i.e. we have
  \begin{equation*}
    \Ext_X^\bullet(A,F) = 0 \quad \text{for any} \quad A \in \cD.
  \end{equation*}
  Let $s \in H^0(X,\cE^\vee)$ be a general section and $\is \colon Y_s \to X$ the
  embedding of its zero locus, as in Lemma \ref{lemma:auxiliary-1}(1).

  Let us consider the set of vector bundles on X defined by
  \begin{equation*}
    \Sigma \coloneqq \{ \cU^\vee(t) \, \vert \, t \in [0,7] \}.
  \end{equation*}
  By Lemma \ref{lemma:auxiliary-2} for any $E \in \Sigma$ and any $j$ the bundle
  $E \otimes \wedge^j \cE^\vee$ lies in $\cD$. Hence, we have
  \begin{equation*}
    \Ext_X^\bullet(E \otimes \wedge^j \cE^\vee, F) =
    H^\bullet(X, \wedge^j \cE \otimes E^\vee \otimes F) = 0 \quad \text{for all } j,
  \end{equation*}
  and making use of the Koszul complex
  \begin{equation*}
    0 \to \wedge^5 \cE \to \dots \to \cE \to \cO_X \to \is_* \cO_{Y_s} \to 0,
  \end{equation*}
  we obtain
  \begin{equation*}
    H^\bullet(X, \left( E^\vee \otimes F \right) \otimes \is_* \cO_Y) = 0.
  \end{equation*}
  Now, by projection formula we rewrite
  \begin{equation*}
    H^\bullet(X, \left( E^\vee \otimes F \right) \otimes \is_* \cO_Y) = H^\bullet(Y_s, \is^* \left( E^\vee \otimes F \right))
    = \Ext^\bullet_Y(\is^* E , \is^* F) = 0.
  \end{equation*}

  Recall that $Y_s \simeq \OG(5,10)$ has two connected components $Y_{s+}$ and $Y_{s-}$.
  We denote the compositions $Y_{s\pm} \subset Y_s \overset{\is}{\to} X$ by $\is_{\pm}$.
  Using this notation we have
  \begin{equation*}
    \Ext^\bullet_{Y_s}(\is^* E , \is^* F) =
    \Ext^\bullet_{Y_{s+}}(\is_+^* E , \is_+^* F) \oplus \Ext^\bullet_{Y_{s-}}(\is_-^* E , \is_-^* F).
  \end{equation*}
  Hence, we have
  \begin{equation*}
    \Ext^\bullet_{Y_{s+}}(\is_+^* E , \is_+^* F) = 0
    \quad \text{and} \quad
    \Ext^\bullet_{Y_{s-}}(\is_-^* E , \is_-^* F) = 0.
  \end{equation*}
  Applying Lemma \ref{lemma:auxiliary-1}(2) and Theorem \ref{theorem:collection-og-5-10}
  we obtain $\is_+^* F = 0 $ and $ \is_-^* F = 0$. Hence, we conclude $\is^* F = 0$. Finally, since the above argument works for any general $s \in H^0(X,\cE^\vee)$,
  by Lemma \ref{lemma:auxiliary-1}(3,4) we obtain $F = 0$.
\end{proof}

\section{A collection on the Freudenthal variety}
\label{section:E7}

Recall from the introduction that, in the third row of Freudenthal magic square, the homogeneous varieties $\bP^2\times \bP^2$, $G(3,6)$, $X=\Spin_{12}/\rP_6$ and $\rE_7/\rP_7$ appear, so by the general philosophy of \cite{LANMAN} these varieties should share a similar geometric behaviour. 
Our first observation here is that Lemma \ref{lemma:canonical-extensions} can be transposed to $\rE_7/\rP_7$. 

\begin{lemma}
On $\rE_7/\rP_7$ we have a canonical $\rE_7$-equivariant exceptional extension
$$ 0\to \cO_{\rE_7/\rP_7} \to O \to \cU_{\omega_1}\to 0, $$
with $O^\vee(2)$ being the normal bundle of $\rE_7/\rP_7$ inside $\bP(V^{\omega_7})$.
\end{lemma}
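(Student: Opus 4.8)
The plan is to transpose the proof of Lemma~\ref{lemma:canonical-extensions} almost verbatim, with $\Spin_{12}$ replaced by $\rE_7$, the half-spin module $V^{\omega_6}$ replaced by the $56$-dimensional module $V^{\omega_7}$, and the tangent bundle $\cU_{\omega_2}=\wedge^2\cU_{\omega_1}$ of the spinor $15$-fold replaced by the tangent bundle of $X=\rE_7/\rP_7$. Since $\rP_7$ is cominuscule, the latter is the irreducible homogeneous bundle $\cU_{\omega_1}$ attached to the nilradical of $\rP_7$, a $27$-dimensional $\rE_6$-module. First I would branch $V^{\omega_7}$ to the Levi, obtaining $\mathbf 1\oplus\mathbf{27}\oplus\overline{\mathbf{27}}\oplus\mathbf 1$, so that the graded pieces of $V^{\omega_7}\otimes\cO_X(1)$ are $\cO_X$, $\cU_{\omega_1}$, $\cU_{\omega_1}^\vee(2)$, $\cO_X(2)$ (ranks $1+27+27+1=56$); comparing determinants ($\det\cU_{\omega_1}=\cO_X(18)$ whereas $\det\cU_{\omega_1}^\vee(2)=\cO_X(36)$) identifies $\cU_{\omega_1}$ with the sub-bundle $T_X\hookrightarrow T_{\bP(V^{\omega_7})}|_X=V^{\omega_7}\otimes\cO_X(1)/\cO_X$. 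Hence the normal bundle $\cN$ of $X$ in $\bP(V^{\omega_7})$ is an $\rE_7$-equivariant extension
\[
0\to \cU_{\omega_1}^\vee(2)\to \cN\to \cO_X(2)\to 0 .
\]

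Next I would show that $\cN$ is the unique non-trivial extension of this shape. Uniqueness is immediate from Borel--Bott--Weil (or just from $\Pic(X)$ having rank one): $\Ext^1_X(\cO_X(2),\cU_{\omega_1}^\vee(2))\simeq H^1(X,\Omega_X)\simeq\bC$. For non-triviality I would use that $X\subset\bP(V^{\omega_7})$ is Legendrian (it belongs to the subexceptional series, cf.\ \cite{LANMAN}): the affine tangent space at each point is Lagrangian for the symplectic form on $V^{\omega_7}$, so $\cN(-1)$ is a quotient of $V^{\omega_7}\otimes\cO_X$ of rank $28=\tfrac12\dim V^{\omega_7}$ whose kernel is $\cN^\vee(1)$, whence
\[
0\to \cN^\vee(1)\to V^{\omega_7}\otimes\cO_X\to \cN(-1)\to 0 .
\]
Since $\cO_X(-2)$ and $\cU_{\omega_1}(-2)$ are acyclic (Borel--Bott--Weil), $\cN^\vee$ is acyclic; twisting the last sequence by $\cO_X(-1)$ and using $H^\bullet(\cO_X(-1))=0$ forces $H^\bullet(\cN(-2))=0$, whereas a split $\cN$ would give $H^0(\cN(-2))=H^0(\cU_{\omega_1}^\vee)\oplus H^0(\cO_X)=\bC$; contradiction. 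So $\cN$ is the non-trivial extension, and setting $O:=\cN^\vee(2)$ produces the asserted sequence $0\to\cO_X\to O\to\cU_{\omega_1}\to0$ together with $O^\vee(2)=\cN$.

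For exceptionality of $O$ I would follow the last paragraph of the proof of Lemma~\ref{lemma:canonical-extensions}: twisting $0\to\cO_X\to O\to\cU_{\omega_1}\to0$ by $O(-2)$ and using that $\cO_X(-2)$ and $\cU_{\omega_1}(-2)$ are acyclic and that $H^\bullet(\cU_{\omega_1}\otimes\cU_{\omega_1}(-2))$ vanishes except for $H^1=\bC$, one gets that $H^\bullet(O\otimes O(-2))$ vanishes except for $H^1=\bC$; then tensoring the sequence $0\to O(-1)\to V^{\omega_7}\otimes\cO_X\to O^\vee(1)\to0$ (the previous display rewritten through $O^\vee(2)=\cN$) by $O(-1)$ yields
\[
0\to O\otimes O(-2)\to V^{\omega_7}\otimes O(-1)\to O^\vee\otimes O\to0 ,
\]
and the acyclicity of $\cO_X(-1)$ and $\cU_{\omega_1}(-1)$ (hence of $O(-1)$, hence of $V^{\omega_7}\otimes O(-1)$) forces $\Hom_X(O,O)\simeq H^0(O^\vee\otimes O)\simeq\bC$ and $\Ext^{>0}_X(O,O)=0$. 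The main obstacle is thus the Borel--Bott--Weil bookkeeping on $\rE_7/\rP_7$ underpinning these inputs, and above all the identity $H^\bullet(\cU_{\omega_1}\otimes\cU_{\omega_1}(-2))=\bC[-1]$: this requires decomposing $\cU_{\omega_1}\otimes\cU_{\omega_1}=S^2\cU_{\omega_1}\oplus\wedge^2\cU_{\omega_1}$ into irreducible $\rE_7$-homogeneous bundles --- exactly one summand becoming $\Omega_X$ after the twist by $\cO_X(-2)$ and accounting for the $H^1=\bC$, the others being acyclic --- and running Borel--Bott--Weil on each, which is considerably heavier than the $\Spin_{12}$ computation but entirely mechanical.
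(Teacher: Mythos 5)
Your proposal is correct and takes the same route as the paper: the paper's own proof is literally the one sentence ``substitute $\Spin_{12}\mapsto\rE_7$, $\rP_6\mapsto\rP_7$, $\omega_2\mapsto\omega_1$, $\omega_6\mapsto\omega_7$, $P\mapsto O$ in the proof of Lemma~\ref{lemma:canonical-extensions}'', and you have unrolled exactly that substitution, down to the sequence $0\to\cN^\vee(1)\to V^{\omega_7}\otimes\cO_X\to\cN(-1)\to 0$, the non-triviality argument via $H^\bullet(\cN(-2))=0$, and the exceptionality argument via $H^\bullet(\cU_{\omega_1}\otimes\cU_{\omega_1}(-2))=\bC[-1]$. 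One small added value: where the paper appeals opaquely to ``autoduality of $V^{\omega_6}$'' to produce the self-dual four-term sequence, you make the mechanism explicit by invoking the Legendrian embedding (so that the kernel of $V^{\omega_7}\otimes\cO_X\twoheadrightarrow\cN(-1)$, whose fibers are the affine tangent spaces, is its own symplectic annihilator $\cN^\vee(1)$), which is the more honest justification and applies verbatim in both cases since both $X$'s sit in the subexceptional series.
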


\begin{proof}
In the proof of Lemma \ref{lemma:canonical-extensions} substitute: $\Spin_{12}$ with $\rE_7$, $\rP_6$ with $\rP_7$, $\omega_2$ with $\omega_1$, $\omega_6$ with $\omega_7$ and $P$ with $O$; the modified proof still holds.
\end{proof}

\begin{lemma}
The collection $\langle \cO_{\rE_7/\rP_7},O,\cO_{\rE_7/\rP_7}(1),O(1),\dots,\cO_{\rE_7/\rP_7}(17),O(17)\rangle$ is exceptional.
\end{lemma}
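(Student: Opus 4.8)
The plan is to mimic, in this two-term setting, the argument of Section~\ref{section:collection} for the pair $(\cO_X,P)$ on the spinor $15$-fold. As $\cO_X$ is obviously exceptional and $O$ is exceptional by the previous lemma, the whole point is to verify semiorthogonality: whenever a bundle of the collection sits strictly to the right of another, the groups $\Ext^\bullet_X$ between them vanish. Since the collection is the block $(\cO_X,O)$ twisted by $\cO_X(t)$ for $t=0,\dots,17$, and $\rE_7/\rP_7$ has Fano index $18$, using $\Ext^\bullet_X(E(k),E'(l))=\Ext^\bullet_X(E,E'(l-k))$ this reduces to four families of vanishings: (a) $H^\bullet(X,\cO_X(-t))=0$ for $1\le t\le 17$; (b) $\Ext^\bullet_X(\cO_X,O(-t))=0$ for $1\le t\le 17$; (c) $\Ext^\bullet_X(O,\cO_X(-t))=0$ for $0\le t\le 17$; (d) $\Ext^\bullet_X(O,O(-t))=0$ for $1\le t\le 17$.

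Statement (a) is the usual cohomology vanishing on a Fano variety of index $18$, i.e.\ Borel--Bott--Weil (BBW). For (b), tensor the sequence $0\to\cO_X\to O\to\cU_{\omega_1}\to 0$ by $\cO_X(-t)$: combined with (a) and the vanishing $H^\bullet(X,\cU_{\omega_1}(-t))=0$ for $1\le t\le 17$ (BBW; recall that $\cU_{\omega_1}$ is the tangent bundle of $X$, the transpose of the corresponding fact in Lemma~\ref{lemma:canonical-extensions}, and that $t$ stays below the index), this yields (b). For (c) with $1\le t\le 17$ one uses instead the dual sequence $0\to\cU_{\omega_1}^\vee\to O^\vee\to\cO_X\to 0$: twisting by $\cO_X(-t)$ and combining $H^\bullet(X,\cU_{\omega_1}^\vee(-t))=0$ (BBW) with (a) gives $\Ext^\bullet_X(O,\cO_X(-t))=H^\bullet(X,O^\vee(-t))=0$. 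The only delicate case is $t=0$, where one must show $H^\bullet(X,O^\vee)=0$: here $\cU_{\omega_1}^\vee\simeq\Omega_X$, so $H^\bullet(X,\cU_{\omega_1}^\vee)=H^1(X,\Omega_X)=\bC$ is concentrated in degree $1$ (Picard number one), and the connecting map $H^0(\cO_X)\to H^1(\cU_{\omega_1}^\vee)$ is cup product with the extension class of $0\to\cU_{\omega_1}^\vee\to O^\vee\to\cO_X\to 0$, which is the transpose of the non-zero class defining $O$; being a non-zero map between one-dimensional spaces it is an isomorphism, whence $H^\bullet(X,O^\vee)=0$. This is the exact analogue of the vanishing of the cohomology of $\cN^\vee$ in the proof of Lemma~\ref{lemma:canonical-extensions}.

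Finally, for (d) write $\Ext^\bullet_X(O,O(-t))=H^\bullet(X,O^\vee\otimes O(-t))$ and use the filtration of $O^\vee\otimes O$ coming from the two-step filtrations of $O$ and $O^\vee$, whose graded pieces are $\cU_{\omega_1}^\vee$, $\cO_X$, $\cU_{\omega_1}$ and $\cU_{\omega_1}^\vee\otimes\cU_{\omega_1}$. The semisimple part of the Levi of $\rP_7$ is of type $E_6$ and $\cU_{\omega_1}$ corresponds to one of its $27$-dimensional representations, so $\cU_{\omega_1}^\vee\otimes\cU_{\omega_1}$ splits, according to $\overline{27}\otimes 27=1\oplus 78\oplus 650$, as $\cO_X$, the adjoint $\mathfrak{e}_6$-bundle and one further irreducible homogeneous bundle, each carrying an explicit twist by a power of $\cO_X(1)$. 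It then suffices to check by BBW that every one of these finitely many irreducible homogeneous bundles has vanishing cohomology after twisting by $\cO_X(-t)$ for $1\le t\le 17$. The main obstacle is precisely this last piece of bookkeeping: one must identify the homogeneous-bundle structure, including the $\cO_X(1)$-grading, of the summands of $\cU_{\omega_1}^\vee\otimes\cU_{\omega_1}$ on $\rE_7/\rP_7$ and run Bott's algorithm for each relevant twist; this is routine but lengthy, and is the part that does not transpose verbatim from the $\OG_+(6,12)$ computations, whereas the only genuine conceptual subtlety is the boundary case $t=0$ in (c), which hinges on the non-triviality of the extension defining $O$.
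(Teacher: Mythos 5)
Your argument is correct and follows essentially the same route as the paper: reduce everything to BBW vanishing for the irreducible homogeneous summands appearing in the relevant tensor products and filtrations, and handle the single boundary case $\Ext_X^\bullet(O,\cO_X)=0$ via the non-triviality of the extension class defining $O$ (which is exactly the argument the paper gives). The only difference is presentational — you spell out the reduction to graded pieces of $O^\vee\otimes O$ and the $E_6$-decomposition $\overline{27}\otimes 27=1\oplus 78\oplus 650$ explicitly, whereas the paper compresses all of this into ``an application of the BBW Theorem''; neither carries out the resulting Bott-algorithm checks in print.
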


\begin{proof}
  By an application of the BBW Theorem, we get, for $1\leq i\leq 17$:
  $$\Ext_X^\bullet(\cO_{\rE_7/\rP_7}(i),\cO_{\rE_7/\rP_7})=\Ext_X^\bullet(O(i),\cO_{\rE_7/\rP_7})=\Ext_X^\bullet(O(i),O)=0.$$
  Since $O$ is a non-trivial extension of $\cO_{\rE_7/\rP_7}$ and $\cU_{\omega_1}$, we also get $\Ext_X^\bullet(O,\cO_{\rE_7/\rP_7})=0$.
\end{proof}

One can also define a $\rG$-equivariant extension 
$$
0 \to O\to P' \to \cU_{2\omega_1} \to 0
$$
Let us define a \emph{numerical exceptional collection} in the derived category $\Db(X)$ of any smooth projective variety $X$ as a collection of objects $E_1,\dots,E_r$ such that $\chi(E_i,E_j)=0$ if $i>j$ and $\chi(E_i,E_i)=1$ for all $i$. Let us denote by
$$ 
\cB':=\left( \cO_{\rE_7/\rP_7},O,P' \right).
$$

Moreover we will denote by $Q'$ the projection of $\cU_{\omega_1+\omega_3}(-5)$ to the left orthogonal of
$$\langle \cB',\dots,\cB'(17)\rangle.$$
\begin{remark}
\label{rem_beware}
Here and later on by ``projection'' we mean that $Q'$ is obtained as an extension of $\cU_{\omega_1+\omega_3}(-5)$ with elements in the collection $( \cB',\dots,\cB'(17))$ so that, for any element $E\in ( \cB',\dots,\cB'(17))$, $\chi(E,Q'')=0$; if we knew that $( \cB',\dots,\cB'(17))$ were an exceptional collection, then it would be admissible and the ``projection'' to its left orthogonal would be well defined. Notice however that $Q'$ is uniquely defined in the Grothendieck group.
\end{remark}

We consider the collection:
$$ \cA':= \left( Q',\cO_{\rE_7/\rP_7},O,P' \right) .$$

\begin{proposition}
\label{first_exc_coll_E7}
The collection $\left( \cA',\cA'(1),\cB'(2),\dots,\cB'(17)\right)$ is a numerical exceptional collection of maximal length, i.e. of length equal to $\sum_p h^{p,p}(\rE_7/\rP_7)=56$.
\end{proposition}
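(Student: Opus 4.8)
\emph{Strategy.} The statement is purely $K$-theoretic, so I would reduce everything to a finite computation in $K_0(X)$ equipped with the Euler form. Recall that $X=\rE_7/\rP_7$ has dimension $27$, Fano index $18$, cohomology concentrated in even degree, and $\sum_p h^{p,p}(X)=\rk(K_0(X))=56$, while the proposed collection has exactly $2\cdot 4+16\cdot 3=56$ terms. Since $\chi(E,F)=\sum_i(-1)^i\dim\Ext_X^i(E,F)$ factors through $K_0(X)$ and is invariant under a simultaneous twist by $\cO_X(1)$, and since the Gram matrix of any numerical exceptional collection is unipotent triangular, hence invertible, so that its classes are linearly independent, the length of any numerical exceptional collection is at most $56$; thus a numerical exceptional collection of $56$ terms is automatically of maximal length, and only the vanishings $\chi(E_i,E_j)=0$ for $i>j$ and the normalisations $\chi(E_i,E_i)=1$ remain to be checked. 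Every bundle involved is either an irreducible homogeneous bundle $\cU_\omega$ or is built from such by the sequences $0\to\cO_X\to O\to\cU_{\omega_1}\to 0$ and $0\to O\to P'\to\cU_{2\omega_1}\to 0$, and the Euler characteristic of any twist of a homogeneous bundle is computed by BBW and the Weyl dimension formula, once the relevant tensor products of $\cU_{\omega_1},\cU_{2\omega_1},\cU_{\omega_1+\omega_3}$ and their duals are decomposed into irreducible $L(\rP_7)$-bundles by the Klimyk/Littlewood--Richardson rule; I would carry out the $\rE_7$ Weyl-group bookkeeping by computer.

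\emph{The rectangular block.} First I would show that $(\cB',\cB'(1),\dots,\cB'(17))$ is numerically exceptional. By the two lemmas above, $(\cO_X,O,\cO_X(1),O(1),\dots,\cO_X(17),O(17))$ is a genuine exceptional collection, so all Euler pairings among twists of $\cO_X$ and $O$ in the range $[0,17]$ already have the right shape. To incorporate $P'$ one writes $[P']=[\cO_X]+[\cU_{\omega_1}]+[\cU_{2\omega_1}]$ in $K_0(X)$, which turns each pairing $\chi(P'(i),F)$ or $\chi(E,P'(j))$ into a sum of Euler characteristics of twists of $\cU_{\omega_1}$ or $\cU_{2\omega_1}$ tensored with a homogeneous bundle, i.e.\ into BBW input; one then checks $\chi(P',P')=1$ and $\chi(E(i),F(j))=0$ for $0\le j<i\le 17$ and $E,F\in\{\cO_X,O,P'\}$, using Serre duality $\chi(E(i),F(j))=-\chi(F(j),E(i-18))$ to halve the work. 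This in particular shows that the $\chi$-Gram matrix of the rectangular block is invertible.

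\emph{The residual pair.} Invertibility of that Gram matrix is exactly what makes the projection defining $Q'$ meaningful in $K_0(X)$ (Remark~\ref{rem_beware}): one has $[Q']=[\cU_{\omega_1+\omega_3}(-5)]-\sum_{E,t}c_{E,t}[E(t)]$ with the $c_{E,t}$ obtained by solving the triangular system $\chi(E(t),Q')=0$ for $E(t)\in(\cB',\dots,\cB'(17))$, whose data are the Euler characteristics $\chi(E(t),\cU_{\omega_1+\omega_3}(-5))$; this produces $[Q']$ explicitly as an integer combination of homogeneous-bundle classes. In the ordered collection $\big(Q',\cB',Q'(1),\cB'(1),\cB'(2),\dots,\cB'(17)\big)$ all pairings $\chi(\cB'(t),Q')=0$ with $t\in[0,17]$ hold by construction, so the only remaining constraints are $\chi(Q',Q')=1$, $\chi(Q'(1),Q')=0$ (equivalently $\chi(Q',Q'(-1))=0$), and $\chi(Q'(1),E)=0$ for $E\in\cB'$ (equivalently $\chi(Q',E(-1))=0$); each of these I would verify by the same BBW bookkeeping applied to the explicit class $[Q']$. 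One expects, by analogy with the $m=4$ case in which $Q$ and $Q'(1)$ are completely orthogonal, that here too $Q'$ and $Q'(1)$ are $\chi$-orthogonal and that the residual pair behaves numerically like an exceptional pair --- which is exactly what is needed --- and the analogy is what dictates the choice of seed $\cU_{\omega_1+\omega_3}(-5)$, since its projection must land precisely on these prescribed values.

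\emph{Main obstacle.} The difficulty is computational, not conceptual: one must decompose tensor products of $\cU_{\omega_1},\cU_{2\omega_1},\cU_{\omega_1+\omega_3}$ and their duals on $\rE_7/\rP_7$ and run BBW for the large $\rE_7$ Weyl group on the resulting dozens of irreducible bundles over a twist-range of length $18$, which is heavy and really must be checked by machine. A secondary but genuine point is the logical order: $Q'$ is only a well-defined class once the rectangular block is known to be numerically exceptional, so the second step must precede the third. Finally, note that --- unlike the $m=4$ case --- nothing geometric (covering families, spinor complexes) enters here, since only alternating sums of $\Ext$-groups, and not the groups themselves, are being controlled: the statement, and its proof, live entirely at the level of $K_0$.
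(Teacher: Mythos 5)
Your proposal is correct and follows essentially the same route as the paper's own proof: reduce everything to $K_0$, verify by BBW and Weyl-character bookkeeping (done by machine) that the rectangular block $(\cB',\dots,\cB'(17))$ is numerically exceptional, then obtain $[Q']$ by solving the triangular orthogonality system — which is the same computation as the paper's iterative left-mutation recipe in $K_0$ — and finally check the few remaining pairings $\chi(Q',Q')=1$, $\chi(Q'(1),Q')=0$ and $\chi(Q',E(-1))=0$ for $E\in\cB'$. The only cosmetic difference is that you spell out explicitly the logical order (rectangular block first, then $Q'$) and the linear-independence argument for maximal length, both of which the paper leaves implicit.
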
 

\begin{proof}
The projection $Q'$ can be computed numerically, i.e. in the Grothendieck group of $\rE_7/\rP_7$. Let us explain the strategy. Let us denote by $R_0:=\cU_{\omega_1+\omega_3}(-5)$ and let us define $R_1,R_2,\dots,R_{54}=Q'$ inductively. Write the collection $\left( \cB',\dots,\cB'(17)\right)$ as $\left( E_1,\dots,E_{54}\right)$. The object $R_{i+1}$ will be an extension of $R_i$ by $\chi(E_{i+1},R_i) E_{i+1}(-18)$ in the Grothendieck group. When this process finishes, by Serre duality one obtains an object $Q'$ which is by definition left orthogonal to $\langle E_1,\dots,E_{54}\rangle$, and one checks that $\chi(Q',Q')=-\chi(Q'(2),Q')=1$ and $\chi(Q'(1),Q')=0$. Another computation with BBW Theorem yields the numerical exceptionality of the collection.
\end{proof}

Some observations are in order. Let us write the element in the Grothendieck group corresponding to $Q'$:
$$ \cU_{\omega_1+\omega_3}(-5) -P'(-7)+O(-6)+56P'(-6)-1673\cO_{\rE_7/\rP_7}(-5) -3137O(-5)+ P'(-5) +$$ $$-94656
\cO_{\rE_7/\rP_7}(-4)-56  P'(-4) -54342
\cO_{\rE_7/\rP_7}(-3) +3271 O(-3)- P'(-3) -58576
\cO_{\rE_7/\rP_7}(-2) +$$ $$-968 O(-2)+56 P'(-2)
+54342\cO_{\rE_7/\rP_7}(-1) -3137 O(-1).$$
Notice that by general properties of mutations we also obtain another numerical exceptional collection:
$$ \left( Q', \rL_{\cB}Q'(1),\cB',\cB'(1),\dots,\cB'(17) \right) .$$

The peculiar fact about this collection is that its residual collection $\left( Q', \rL_{\cB'}Q'(1) \right) $ is numerically completely orthogonal, meaning that $\chi(Q',\rL_{\cB'}Q'(1))=\chi(\rL_{\cB'}Q'(1),Q')=0$. 
Therefore, such a residual collection numerically satisfies Dubrovin's refined conjecture, see \cite[Conjecture 1.3]{KS21} and \cite[Corollary 1.2]{ChMaPe3}. We believe that the collection above is an exceptional collection in $\Db(\rE_7/\rP_7)$ (of maximal length), but we could not prove our claim due to the big number of cohomologies between $\cHom$'s of the irreducible factors of the extensions in play. We even suspect that the collection is full. 

If the above collection has the advantage of respecting Dubrovin's conjecture's expectation, we will briefly describe another numerical exceptional collection on $\rE_7/\rP_7$ which is closer to the collection of \eqref{eq:collection-d6-p6} on $\Spin_{12}/\rP_6$. Let us begin with the usual collection $\left( \cO_{\rE_7/\rP_7},O,\dots, \cO_{\rE_7/\rP_7}(17),O(17) \right)$. Consider the projection $P$ of $\cU_{\omega_3}$ to the left orthogonal of $\langle \cO_{\rE_7/\rP_7}(1),O(1),\dots, \cO_{\rE_7/\rP_7}(18),O(18) \rangle$. Moreover consider the projection (as in Remark \ref{rem_beware}) $Q$ of $\cU_{\omega_1+\omega_3}$ to the left orthogonal of $\langle \cO_{\rE_7/\rP_7}(1),O(1),P(1) ,\dots,\cO_{\rE_7/\rP_7}(18),O(18),P (18) \rangle$. Let us write
\[\cB:=\langle \cO_{\rE_7/\rP_7},O,P \rangle, \qquad \cA:=\langle \cO_{\rE_7/\rP_7},O,P,Q \rangle.\]

By a repeated application of BBW Theorem done with a Python script using \cite{lie} as in the proof of Proposition \ref{first_exc_coll_E7} one obtains the following result. 
\begin{proposition}
\label{prop_E7_second_coll}
The homogeneous bundles $P$ and $Q$ are numerically exceptional and the collection
  $$ \cD':= \left( \cA,\cA(1),\cB(2),\dots,\cB(17) \right) $$ is numerically exceptional of maximal length. Moreover $Q$ and $\rL_{\cB(1)}Q(1)$ are numerically completely orthogonal.
\end{proposition}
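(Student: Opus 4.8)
The statement is entirely numerical, so the plan is to reduce everything to Euler characteristics of homogeneous bundles and then to carry out a finite computation with \cite{lie}. For homogeneous bundles $\cE,\cF$ on $\rE_7/\rP_7$ one has $\chi(\cE,\cF)=\chi(\rE_7/\rP_7,\cE^\vee\otimes\cF)$; restricting $\cE^\vee\otimes\cF$ to the Levi factor $L(\rP_7)$ (of type $\rE_6\times\rT_1$) it becomes a direct sum of irreducible homogeneous bundles $\cU_\nu$, and since the Euler characteristic is unchanged when one passes to the associated graded of the filtration induced by the unipotent radical, $\chi(\cE,\cF)=\sum_\nu\chi(\rE_7/\rP_7,\cU_\nu)$, each term being $0$ if $\nu+\rho$ is singular and $(-1)^{\ell(w)}\dim V^{w(\nu+\rho)-\rho}$ otherwise, by BBW. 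Both the branching/tensor step and the BBW step are what the script implements.

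First I would make $P$ and $Q$ explicit as classes in $K_0(\rE_7/\rP_7)$. In $K_0$ the left mutation is given by $[\rL_\cE(F)]=[F]-\chi(\cE,F)[\cE]$, iterated over the members of a collection in the prescribed order; this is precisely the ``projection'' of Remark \ref{rem_beware}, computed as in the proof of Proposition \ref{first_exc_coll_E7}. Thus $[P]$ is obtained from $[\cU_{\omega_3}]$ by mutating through $\langle\cO_{\rE_7/\rP_7}(1),O(1),\dots,\cO_{\rE_7/\rP_7}(18),O(18)\rangle$, and $[Q]$ from $[\cU_{\omega_1+\omega_3}]$ by mutating through $\langle\cB(1),\dots,\cB(18)\rangle$; both are finite computations once $[P]$ is known.

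Next I would verify that $\cD'$ is numerically exceptional. By the very definition of the projections, $\chi(E,P)=0$ and $\chi(E,Q)=0$ for $E$ running over the twisted subcollections used in the mutations, so the bulk of the upper-triangular vanishing $\chi(E_i,E_j)=0$ for $i>j$ comes for free, and the rectangular part $\langle\cO_{\rE_7/\rP_7},O,\dots,\cO_{\rE_7/\rP_7}(17),O(17)\rangle$ is already known to be exceptional. What remains for the script is: $\chi(P,P)=\chi(Q,Q)=1$; the vanishings internal to one block $\cA$, i.e.\ $\chi(Q,P)$, $\chi(Q,O)$, $\chi(Q,\cO_{\rE_7/\rP_7})$, $\chi(P,O)$, $\chi(P,\cO_{\rE_7/\rP_7})$, $\chi(O,\cO_{\rE_7/\rP_7})$; and the vanishing of $\chi$ of $Q,P,O,\cO_{\rE_7/\rP_7}$ against all the earlier twists, down to $\cB(2)$ and $\cA(1)$. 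Once this is done, the $56=4\cdot 2+3\cdot 16$ objects of $\cD'$ have linearly independent classes in $K_0(\rE_7/\rP_7)$ (the Gram matrix of $\chi$ is unitriangular, hence invertible), and since $\rk K_0(\rE_7/\rP_7)=\sum_p h^{p,p}(\rE_7/\rP_7)=56$ they form a $\bQ$-basis; this is the asserted maximality of the length.

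Finally, for the residual pair I would use $\rL_{\cB(1)}Q(1)=(\rL_\cB Q)(1)$, compute $[\rL_\cB Q]$ by the same mutation formula, and let the script check $\chi(Q,(\rL_\cB Q)(1))=0$ and $\chi((\rL_\cB Q)(1),Q)=0$, which is the numerical complete orthogonality. The main obstacle is not conceptual but one of size: the $\rE_7$-representations involved have very large dimensions and the tensor decompositions are long, so the computation must be organised with care, and --- since we do not yet know that $\cD'$ is an honest (rather than merely numerical) exceptional collection --- every assertion above has to be read as a statement about Euler characteristics of classes in $K_0$, in particular the mutations are taken only in the Grothendieck group, as in Remark \ref{rem_beware}.
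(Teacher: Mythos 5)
Your proposal follows essentially the same route as the paper: reduce everything to Euler characteristics in $K_0(\rE_7/\rP_7)$, compute $[P]$, $[Q]$, and $[\rL_{\cB(1)}Q(1)]$ by iterating the mutation formula in the Grothendieck group as per Remark~\ref{rem_beware}, and verify the required vanishings $\chi(E_i,E_j)=0$ via BBW by means of a machine computation using \cite{lie} (the paper additionally works with the auxiliary twisted projections $F$, $F'$ of $\cU_{\omega_1+\omega_3}(-9)$ and $\cU_{\omega_1+\omega_3}(-8)$ to simplify the script, but this is only an organisational detail). Your unitriangular-Gram-matrix observation for the ``maximal length'' claim is a correct spelling-out of what the paper leaves implicit.
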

The computation is enclosed as an ancillary file.

\begin{remark}
Notice that in the Python script, in order to obtain the result, it was easier to work with the projection (as in Remark \ref{rem_beware}) of $\cU_{\omega_1+\omega_3}(-9)$ to the left orthogonal of $\langle \cB,\cB(1),\dots,\cB(17) \rangle$. Then $Q$ is easily obtained as the projection of $F(9)$ to the left of $\langle \cB(1),\dots,\cB(8) \rangle$. Similarly, if $F'$ is the projection of $\cU_{\omega_1+\omega_3}(-8)$ to the left of $\langle \cB,\cB(1),\dots,\cB(17) \rangle$, then $\rL_{\cB(1)}Q(1)$ is obtained as the projection of $F'(9)$ to the left of $\langle \cB(1),\dots,\cB(8)\rangle$. Since these operations preserve (numerical) orthogonality, we prove Proposition \ref{prop_E7_second_coll} using $F$ and $F'$.
\end{remark}

\bibliographystyle{amsalpha}
\bibliography{refs}

\end{document}